\newtheorem{theo}{Theorem}[section]
\newtheorem{prop}[theo]{Proposition}
\newtheorem{coro}[theo]{Corollary}
\newtheorem{lemme}[theo]{Lemma}
\newtheorem{remark}[theo]{Remark}
\newtheorem{assumption}[theo]{Assumption}
\newcommand{\w}{\widehat}
\newcommand{\one}{\mathds{1}}
\newcommand{\E}{\mathbb{E}}
\renewcommand{\P}{\mathbb{P}}
\newcommand{\R}{\mathbb{R}}
\title{Nonparametric estimation of the diffusion coefficient from \textcolor{black}{i.i.d.} S.D.E. paths}
\author{Eddy Ella-Mintsa
\\
\small{LAMA, Université Gustave Eiffel.}}
\begin{document}

\maketitle

\begin{abstract} Consider a diffusion process $X=(X_t)_{t\in[0,1]}$ observed at discrete times and high frequency, solution of a stochastic differential equation whose drift and diffusion coefficients are assumed to be unknown. In this article, we focus on the nonparametric \textcolor{black}{estimation} of the diffusion coefficient. We propose ridge estimators of the square of the diffusion coefficient from discrete observations of $X$ that are obtained by minimization of the least squares contrast. We prove that the estimators are consistent and derive rates of convergence as the \textcolor{black}{number of} observations tends to infinity. Two observation schemes are considered in this paper. The first scheme consists in one diffusion path observed at discrete times, where the discretization step of the time interval $[0,1]$ tends to zero. The second scheme consists in repeated observations of the diffusion process $X$, where the number of the observed paths tends to infinity. The theoretical results are completed with a numerical study over synthetic data.
\end{abstract}

{\bf Keywords.} Nonparametric estimation, diffusion process, diffusion coefficient, least squares contrast, repeated observations.\\

\noindent MSC: 62G05; 62M05; 60J60

\section{Introduction}

Let $X=(X_t)_{t\in[0,1]}$ be a one dimensional diffusion process with finite horizon time, solution of the following stochastic differential equation:
\begin{equation}
    \label{eq:model}
    dX_t=b(X_t)dt+\sigma(X_t)dW_t, \ \ X_0=\textcolor{black}{x_0 \in \mathbb{R}}
\end{equation}
where $(W_t)_{t\geq 0}$ is a standard Brownian motion. \textcolor{black}{The drift function $b$ and the diffusion coefficient $\sigma$ are assumed to be unknown. }

\textcolor{black}{For a number of years now, there has been a great deal of interest in stochastic processes that are solutions of stochastic differential equations, also known as diffusion processes. This mathematical tool has been the subject of many studies in the modelling of complex random phenomena in a number of fields such as finance, with the modelling of the price of a financial asset (see \textit{e.g.} \cite{jaillet1990variational}, \cite{lamberton2011introduction}, \cite{el1997backward}), or the modelling of interest rates (see \textit{e.g.} \cite{kijima2009multi}, \cite{duffie1996yield}), in cell biology (see \textit{e.g.} \cite{sbalzarini2006analysis}, \cite{erban2009stochastic}), population genetics or population dynamics (see \textit{e.g.} \cite{etheridge2011some}, \cite{ethier1977error}), or in physics (see \textit{e.g.} \cite{domingo2020properties}). The estimation of coefficients of stochastic differential equations plays a key role in the study of these processes. It is the case for example for the construction of some classification procedures for diffusion paths (see \cite{gadat2020optimal}, \cite{denis2020consistent}), the automation of which is actively contributing to the development of several scientific and economic sectors. The analysis of the diffusion coefficient and its estimation plays a significant role in finance, with the volatility study of a financial asset.}

The goal of the article is to construct, from \textcolor{black}{$Nn$ observations $\left\{X^{j}_{k\Delta_n}, ~ k = 0, \ldots, n-1, ~ j = 1, \ldots, N\right\}$ coming from $N$ independent discrete time copies $\left\{\bar{X}^{1}, \ldots, \bar{X}^{N}\right\}$ of $X$} with time step $\Delta_n = 1/n$, a nonparametric estimator of the square of the diffusion coefficient $\sigma^{2}(.)$. We are in the framework of high frequency data since the time step $\Delta_{n}$ tends to zero as $n$ tends to infinity. Furthermore, we consider estimators of $\sigma^{2}(.)$ built from a single diffusion path \textcolor{black}{ and the $n$ observations $\left(X_{k\Delta_n}, ~ k = 0, \ldots, n\right)$} ($N = 1$), and those built when \textcolor{black}{$N$ tends to infinity}. In this paper, we first propose a ridge estimator of $\sigma^{2}(.)$ on a compact interval. Secondly, we focus on \textcolor{black}{the} nonparametric estimation of $\sigma^{2}(.)$ \textcolor{black}{on a non-compact interval, especially} the real line $\R$. We measure the risk of any estimator $\w{\sigma}^{2}$ of the square of the diffusion coefficient $\sigma^{2}$ by $\E\left[\|\w{\sigma}^{2} - \sigma^{2}\|^{2}_{n,N}\right]$, where $\|\w{\sigma}^{2} - \sigma^{2}\|^{2}_{n,N} := (Nn)^{-1}\sum_{j=1}^{N}{\sum_{k=0}^{n-1}{\left(\w{\sigma}^{2}(X^{j}_{k\Delta}) - \sigma^{2}(X^{j}_{k\Delta})\right)^2}}$ is an empirical norm defined from the \textcolor{black}{$Nn$ observations}.

\paragraph{Related works.}

There is a large literature on the estimation of coefficients of diffusion processes, and we focus on the papers studying the estimation of $\sigma^{2}$.

Estimation of the diffusion coefficient has been considered in the parametric case (see e.g.~\cite{genon1993estimation},~\cite{jacod1993random},~\cite{genon1994estimation},~\cite{clement1997},~\cite{genon1999parameter},~\cite{gloter2000discrete},~\cite{sorensen2002estimation}). In the nonparametric case, estimators of the diffusion coefficient from discrete observations are proposed under various frameworks. 

First, the diffusion coefficient is constructed from one discrete observation of the diffusion process ($N = 1$) in long time ($T \rightarrow \infty$) (see e.g. \cite{hoffmann1999adaptive},~\cite{comte2007penalized},~\cite{schmisser2012non},~\cite{schmisser2019non}), or in short time ($T = 1$) (see \textit{e.g.} \cite{genon1992non}, \cite{soulier1993estimation}, \cite{hoffmann1997minimax}, \cite{florens1998estimation}, \cite{hoffmann1999lp}). Note that in short time ($T<\infty$), only the diffusion coefficient can be estimated consistently from a single discrete path contrary to the drift function whose consistent estimation relies on repeated discrete observations of the diffusion process (see e.g.~\cite{comte2020nonparametric},~\cite{denis2020ridge}). For the case of short time diffusion processes (for instance $T = 1$), estimators of a time-dependent diffusion coefficients $t \mapsto \sigma^2(t)$ have been proposed. In this context, \cite{genon1992non} built a nonparametric estimator of $t\mapsto\sigma^{2}(t)$ and studied its $L_2$ risk using wavelets methods, \cite{soulier1993estimation} studies the $L_p$ risk of a kernel estimator of $\sigma^{2}(t)$, and \cite{hoffmann1997minimax} derived a minimax rate of convergence of order $n^{-s/(1+2s)}$ where $s>1$ is the smoothness parameter of the Besov space $\mathcal{B}^{s}_{p,\infty}([0,1])$ (see later in the paper). For the space-dependent diffusion coefficient $x \mapsto \sigma^{2}(x)$, a first estimator based on kernels and built from a single discrete observation of the diffusion process with $T = 1$ is proposed in \cite{florens1993estimating}. The estimator has been proved to be consistent under a condition on the bandwidth, but a rate of convergence of its risk of estimation has not been established.

Secondly, the diffusion coefficient is built in short time ($T < \infty$) from \textcolor{black}{$Nn$ observations of the diffusion process, with $N,n \rightarrow \infty$}. In \cite{denis2024nonparametric}, a nonparametric estimator of $\sigma^{2}$ \textcolor{black}{ on the real line $\R$} is proposed when the time horizon $T = 1$. The estimator has been proved to be consistent with a rate of order $N^{-1/5}$ over the space of Lipschitz functions. 

Two main methods are used to build consistent nonparametric estimators of $x \mapsto \sigma^{2}(x)$. The first method is the one using kernels (see e.g.~\cite{florens1998estimation},~\cite{bandi2003fully},~\cite{reno2006nonparametric},~\cite{gourieroux2017nonparametric},~\cite{schmisser2019non},~\cite{park2021nonparametric}), the other method consists in estimating $\sigma^{2}$ as solution of a nonparametric regression model using the least squares approach. Since the diffusion coefficient is assumed to belong to an infinite dimensional space, the method consists in projecting $\sigma^{2}$ into a finite dimensional subspace, estimating the projection and making a data-driven selection of the dimension by minimizing a penalized least squares contrast (see e.g.~\cite{hoffmann1999lp},~\cite{hoffmann1999adaptive},~\cite{schmisser2012non},~\cite{comte2017nonparametric},~\cite{schmisser2019non},~\cite{denis2024nonparametric}).

\paragraph{Main contribution.}

\textcolor{black}{In this paper, we assume to have at our disposal the following $Nn$ observations 
$$\left\{X^{j}_{k\Delta_n}, ~ j = 1, \ldots, N; k = 0, \ldots, n-1\right\},$$ 
gathered in the $N$ independent sample paths $\left\{\bar{X}^{j} = (X^{j}_{k\Delta_n})_{0 \leq k\leq n-1}, ~ j= 1, \ldots, N\right\}$ which are discrete-time copies of the diffusion process $X$ solution of Equation~\eqref{eq:model}.}
The main objectives of this paper are the following.

\begin{enumerate}
    \item \textcolor{black}{The construction of a consistent ridge estimator of $\sigma^{2}$ on a compact interval from a single diffusion path (N = 1), using the least squares approach. In the literature and in the same framework, \cite{hoffmann1999lp} proposed a projection estimator of $\sigma^{2}$ on the compact interval $[0,1]$, over an approximation space spanned by the wavelet basis. The author established an optimal rate of order $n^{-s/(2s+1)}$ on a Besov space of parameter $s$, defined from the space $L^{p}([0,1])$. To obtain this result, it is necessary to be sure that the data points are uniformly distributed over the compact interval $[0,1]$ using the local time of the diffusion process $X$, and working conditionally on the event "the local time is lower bounded on $[0,1]$". In this paper, we do not establish our results conditionally on the same event, we rather take advantage of the assumptions made on the coefficients $b$ and $\sigma$, the properties of the local time, and on the approximation of the transition density by Gaussian densities proposed in \cite{gobet2002lan}, to prove that the data points are uniformly scattered in the estimation interval. We establish, over a H\"older space, a rate of the same order as the one obtained in \cite{hoffmann1999lp}. Moreover, we propose an adaptive estimator of $\sigma^{2}$ based on a data-driven selection of the dimension through the minimization of a penalized least squares contrast.}
    \item \textcolor{black}{The construction of a nonparametric estimator of $\sigma^{2}$ on a non-compact interval and from $n+1$ observations $\left(X_{k\Delta_n}, ~ k = 0, \ldots, n\right)$ of a single diffusion path (N = 1), particularly on the real line $\mathbb{R}$. In the literature, only \cite{florens1993estimating} studied the nonparametric estimation of $\sigma^{2}$ on a non-compact interval from a single diffusion path. The author proposed a kernel estimator of $\sigma^{2}$ and proved its consistency. No rate of convergence has been established. In this article, we propose a projection estimator $\sigma^{2}$ using the least squares approach. We establish a rate of convergence, and propose an adaptive estimator through a data-driven selection of the dimension.}
    \item \textcolor{black}{For the case of repeated observations of the diffusion process ($N \rightarrow \infty$), only \cite{denis2024nonparametric} tackled the problem of estimation of $\sigma^{2}$ and established a rate of order $N^{-1/5}$ over a space of Lipschitz functions. In this paper, we propose projection estimators of $\sigma^{2}$ both on a compact interval and on a non-compact interval, particularly on the real line $\mathbb{R}$. We establish rates of convergence and propose adaptive estimators for both the compact and the non-compact cases.}
    \item Focusing on the support of the diffusion coefficient, we consider an intermediate case between a compact interval and $\R$ by proposing a ridge estimator of $\sigma^2$ restricted to the compact interval $[-A_N,A_N]$ where $A_N\rightarrow\infty$ as $N\rightarrow\infty$. The benefit of this approach is that the resulting projection estimator can reach a faster rate of convergence compared to the rate obtained on the real line $\R$.
\end{enumerate} 

We sum up below the rates of convergence of the ridge estimators of $\sigma^{2}_{|I}$ with $I\subseteq \R$ over a H\"older space defined in the next section with a smoothness parameter $\beta \geq 1$. 

\begin{table}[hbtp]
	\centering
\renewcommand{\arraystretch}{1.75}
\begin{tabular}{l|c|c} 
  Estimation interval & $N = 1$ and $n \rightarrow +\infty$ & $N \rightarrow +\infty$ and $n \rightarrow +\infty$ \\
\hline
 \multirow{1}{*}{$I = [-A,A], ~~ A>0$} & $n^{-\beta/(2\beta+1)}$ & $(Nn)^{-\beta/(2\beta+1)}$ \\ 
 \hline
 \multirow{1}{*}{$I = [-A_N,A_N], ~~ A_N\underset{N\rightarrow +\infty}{\longrightarrow}{+\infty}$} & xxxx & $\textcolor{black}{\log^{\beta/2}(N)}(Nn)^{-\beta/(2\beta+1)}, ~~ N\propto n$ \\ 
\hline
\multirow{1}{*}{$I = \R$} & \multirow{1}{*}{$\textcolor{black}{\log^{\beta}(n)}n^{-\beta/(4\beta+1)}$} & $\textcolor{black}{\log^{\beta}(N)}(Nn)^{-\beta/(4\beta+1)} + \textcolor{black}{n^{-1}}$ \\
\hline
\end{tabular}
	\caption{Rates of convergence of the square root of the risk of estimation $\mathbb{E}\left[\left\|\w{\sigma}^{2} - \sigma^{2}_{|I}\right\|^{2}_{n,N}\right]$ of the non-adaptive estimators $\widehat{\sigma}^2$ of the square of the diffusion coefficient $\sigma^{2}_{|I}$ built from one diffusion path ($N=1$) on the left column, and from repeated observations of the diffusion process ($N \rightarrow \infty$) on the right column. For the precise results, see Sections~\ref{sec:Estimation-OnePath} and \ref{sec:Estimation-N.paths}.}
	\label{tab:rate_spline}
\end{table}

\newpage

\paragraph{Outline of the paper.}

In Section~\ref{sec:framework and assumptions}, we define our framework with the key assumptions on the coefficients of the diffusion process ensuring for instance that Equation~\eqref{eq:model} admits a unique strong solution. Section~\ref{sec:Estimation-OnePath} is devoted to the non-adaptive estimation of the diffusion coefficient from \textcolor{black}{from one discrete observation of the diffusion process} both on a compact interval and on the real line $\R$. In Section~\ref{sec:Estimation-N.paths}, \textcolor{black}{we study the non-adaptive estimation of the diffusion coefficient from repeated discrete observations of the diffusion process}. We propose in Section~\ref{sec:AdaptiveEstimation-N.paths}, adaptive estimators of the diffusion coefficient, \textcolor{black}{considering the nature of the estimation interval together with the number of discrete observations of the process}, and Section~\ref{sec:NumericalStudy} complete the study with numerical evaluation of the performance of estimators. \textcolor{black}{A conclusion and some perspectives are given in Section~\ref{sec:Conclusion}, and we} prove our theoretical results in Section~\ref{sec:proof}.

\section{Framework and assumptions}
\label{sec:framework and assumptions}

Consider a diffusion process $X=(X_t)_{t\in[0,1]}$, solution of Equation~\eqref{eq:model} whose drift and diffusion coefficient satisfy the following assumption. 

\begin{assumption}
    \label{ass:Assumption 1}
    \begin{enumerate}
        \item There exists a constant $L_0>0$ such that $b$ and $\sigma$ are $L_0-$Lipschitz functions on $\mathbb{R}$.
        \item There exist constants $\sigma_{0},\sigma_{1}>0$ such that : $\sigma_{0}\leq\sigma(x)\leq\sigma_{1}, \ \ \forall x\in\mathbb{R}$.
        \item $\sigma\in\mathcal{C}^{2}\left(\mathbb{R}\right)$ and there exist $C >0$ and $\alpha\geq 0$ such that: 
        $$\left|\sigma^{\prime}(x)\right|+\left|\sigma^{\prime\prime}(x)\right|\leq C\left(1+|x|^{\alpha}\right), \ \ \forall x\in\mathbb{R}.$$
    \end{enumerate}
\end{assumption}
Under Assumption~\ref{ass:Assumption 1}, $X=(X_t)_{t\in[0,1]}$ is the unique strong solution of Equation~\eqref{eq:model} (see \cite{karatzas2014brownian}), and this unique solution admits a transition density $(t,\textcolor{black}{x_0},x)\mapsto p_X(t,\textcolor{black}{x_0},x)$ (see \textit{e.g.} \cite{gobet2002lan}). Besides, \textcolor{black}{we obtain from Assumption~\ref{ass:Assumption 1} the following result.
\begin{lemme}\label{lm:ConseqAssumption1}
    Under Assumption~\ref{ass:Assumption 1}, it holds:
 \begin{equation*}
    \forall q\geq 1, \ \ \mathbb{E}\left[\underset{t\in[0,1]}{\sup}{|X_t|^q}\right]<\infty.
\end{equation*}
\end{lemme}
The above result shows that under Assumption~\ref{ass:Assumption 1}, the diffusion process solution of Equation~\eqref{eq:model} admits moments of any order $q \in \mathbb{N}^{*}$. The proof of Lemma~\ref{lm:ConseqAssumption1} is provided in appendix.
}

\subsection{Definitions and notations}
\label{subsec:definitions and notations}

We suppose to have at our disposal \textcolor{black}{$Nn$ observations $D_{N,n}=\left\{X^{j}_{k\Delta}, ~ k = 0, \ldots, n, ~ j=1,\cdots,N\right\}$} constituted of $N$ independent copies of the discrete observation \textcolor{black}{$(X_{k\Delta_n}, ~ k = 0, \ldots, n)$} of the diffusion process $X$, where $\Delta_n = 1/n$ is the time-step. The objective is to construct, from the sample $D_{N,n}$, a nonparametric estimator of the square $\sigma^{2}$ of the diffusion coefficient on an interval $I \subseteq \R$. In the sequel, we consider two main cases, the first one being the estimation of $\sigma^{2}$ on the interval $I$ from \textcolor{black}{$n+1$ observations $D_{1,n} = \left\{X^{1}_{k\Delta_n}, ~ k = 0, \ldots, n\right\}$ ($N=1$), with $n\rightarrow \infty$}. For the second case, we assume that both $N$ and $n$ tend to infinity. 

For each measurable function $h$, such that $\mathbb{E}\left[h^{2}(X_t)\right]<\infty$ for all $t\in[0,1]$, we define the following empirical norms:
\begin{equation}
\label{eq:EmpNorms}
    \|h\|^{2}_{n}:=\mathbb{E}_{X}\left[\frac{1}{n}\sum_{k=0}^{n-1}{h^{2}\left(X_{k\Delta_n}\right)}\right], ~~~ \|h\|^{2}_{n,N}:=\frac{1}{Nn}\sum_{j=1}^{N}{\sum_{k=0}^{n-1}{h^{2}\left(X^{j}_{k\Delta_n}\right)}}, ~~~ \textcolor{black}{\|h\|^{2}_{X}:=\int_{0}^{1}{h^{2}(X_s)ds}}
\end{equation}
\textcolor{black}{where $\mathbb{E}_X$ is the corresponding expectation of the probability distribution of the diffusion process $X$ solution of Equation~\eqref{eq:model}. Besides, for} all $h \in \mathbb{L}^{2}(I)$, we have
\begin{align*}
    \|h\|^{2}_{n}=\int_{I}{h^{2}(x)\frac{1}{n}\sum_{k=0}^{n-1}{p_X(k\Delta_n,x_0,x)}dx}=\int_{I}{h^{2}(x)f_n(x)dx},
\end{align*}
where $f_n: x\mapsto\frac{1}{n}\sum_{k=0}^{n-1}{p_X(k\Delta_n,\textcolor{black}{x_0},x)}$ is a density function. For the case of non-adaptive estimators of $\sigma^2$, we also establish bounds of the risks of the estimators based on the empirical norm $\|.\|_n$ or the $\mathbb{L}^{2}-$norm $\|.\|$ when the estimation interval $I$ is compact.

For any integers $p,q \geq 2$ and any matrix $M \in \R^{p \times q}$, we denote by $^{t}M$, the transpose of $M$. \\

\textcolor{black}{In the sequel, without loss of generality and for the simplification of notations, we set $x_0 = 0$. The transition density is therefore defined by $p_X: (t,x) \mapsto p_X(t,x)$. Thus, for any solution $X$ of Equation~\eqref{eq:model} such that $X_0=x_0 \in \mathbb{R}\setminus\{0\}$, we consider the diffusion process $Y = (X_t - x_0)_{t \in [0,1]}$ solution of
$$dY_t = \widetilde{b}(Y_t)dt + \widetilde{\sigma}(Y_t)dW_t, ~~ Y_0 = 0,$$
with $\widetilde{b}: x\mapsto b(x+x_0)$ and $\widetilde{\sigma}: x\mapsto \sigma(x+x_0)$. Besides, setting the time horizon to $T = 1$ is without loss of generality.\\
For any real number $x \in \mathbb{R}_{+*}$, we denote by $\lfloor x \rfloor \in \mathbb{N}$ the largest integer strictly smaller than $x$, and for all integer $p,q \in \mathbb{N}\setminus\{0,1\}$ such that $p < q$, we denote by $[\![p,q]\!]$ the set of integers $\{p, p+1,\ldots,q\}$. 
} 

\subsection{Spaces of approximation}
\label{subsec:spaces of approximation}

We propose projection estimators of $\sigma^{2}$ on a finite-dimensional subspace. To this end, we consider for each $m \geq 1$, a $m$-dimensional subspace $\mathcal{S}_{m}$ given as follows:
\begin{equation}
\label{eq:approximation subspace}
     \mathcal{S}_m:=\mathrm{Span}\left(\phi_{\ell}, \ \ \ell=0,\cdots,m-1\right), ~~ m\geq 1
\end{equation}
where the functions $(\phi_{\ell}, ~ \ell \in \mathbb{N})$ are continuous, linearly independent and bounded on $I$. Furthermore, we need to control the $\ell^2-$norm of the coordinate vectors of elements of $\mathcal{S}_m$, which leads to the following constrained subspace,
\begin{equation}
 \label{eq:constrainded approximation subspace}
    \mathcal{S}_{m,L}:=\left\{h=\sum_{\ell=0}^{m-1}{a_{\ell}\phi_{\ell}}, \ \ \sum_{\ell=0}^{m-1}{a^{2}_{\ell}}=\|\mathbf{a}\|^{2}_{2}\leq mL, \ \ \mathbf{a}=\left(a_0,\cdots,a_{m-1}\right), \ \ L>0\right\}.
\end{equation}
Note that $\mathcal{S}_{m,L}\subset\mathcal{S}_m$ and $\mathcal{S}_{m,L}$ is no longer a vector space. The control of the coordinate vectors allows to establish an upper bound of the estimation error that tends to zero as $n\rightarrow\infty$ or $N,n\rightarrow \infty$. In fact, we prove in the next sections that the construction of consistent estimators of $\sigma^{2}$ requires the functions $h=\sum_{\ell=0}^{m-1}{a_{\ell}\phi_{\ell}}$ to be bounded, such that
$$ \|h\|_{\infty} \leq \underset{\ell=0,\ldots,m-1}{\max}{\|\phi_{\ell}\|_{\infty}}~\|\mathbf{a}\|_{2}. $$
This condition is satisfied for the functions of the constrained subspaces $\mathcal{S}_{m,L}$ with $m \geq 1$. In this article, we work with the following bases.

\paragraph{[B] The {\bf B}-spline basis.}

This is an \textcolor{black}{example} of a non-orthonormal basis defined on a compact interval. Let $A > 0$ be a real number, and suppose (without restriction) that $I = [-A,A]$. Let $K,M\in\mathbb{N}^{*}$, and consider $\mathbf{u}=\left(u_{-M},\cdots,u_{K+M}\right)$ a knots vector such that $u_{-M} = \cdots = u_{-1} = u_0 = -A$, $u_{K+1} = \cdots = u_{K+M} = A$, and for all $i=0,\cdots,K$,
\begin{align*}
   u_i = -A+i\frac{2A}{K}.
\end{align*}
One calls \textbf{B}-spline functions, the piecewise polynomial functions $\left(B_{\ell}\right)_{\ell=-M,\cdots,K-1}$ of degree $M$, associated with the knots vector $\mathbf{u}$ (see \cite{gyorfi2006distribution}, {\it Chapter 14}). The \textbf{B}-spline functions are linearly independent \textcolor{black}{smooth} functions returning zero for all $x\notin[-A,A]$, and satisfying some smoothness conditions established in \cite{gyorfi2006distribution}. Thus, we consider approximation subspaces $\mathcal{S}_{K+M}$ defined by
\begin{align*}
\mathcal{S}_{K+M}=\mathrm{Span}\left\{B_{\ell}, \ \ell=-M,\cdots,K-1\right\}
\end{align*}
of dimension $\dim(\mathcal{S}_{K+M})=K+M$, and in which, each function $h=\sum_{\ell=-M}^{K-1}{a_{\ell}B_{\ell}}$ is $M-1$ times continuously differentiable thanks to the properties of the spline functions (see~\cite{gyorfi2006distribution}). Besides, the spline basis is included in the definition of both the subspace $\mathcal{S}_{m}$ and the constrained subspace $\mathcal{S}_{m,L}$ (see Equations~\eqref{eq:approximation subspace}~and~\eqref{eq:constrainded approximation subspace}) with $m = K + M$ and for any coordinates vector $(a_{-M},\ldots,a_{K-1}) \in \R^{K+M}$,
$$ \sum_{\ell=-M}^{K-1}{a_{\ell}B_{\ell}} = \sum_{\ell=0}^{m-1}{a_{\ell-M}B_{\ell-M}}. $$
The integer $M \in \mathbb{N}^{*}$ is fixed, while $K$ varies in the set of integers $\mathbb{N}^{*}$. If we assume that $\sigma^{2}$ belongs to the H\"older space $\Sigma_{I}(\beta,R)$ given as follows:
\begin{equation*}
    \Sigma_{I}(\beta,R):=\left\{h\in\mathcal{C}^{\lfloor\beta\rfloor+1}(I), \ \left|h^{(\ell)}(x)-h^{(\ell)}(y)\right|\leq R|x-y|^{\beta-l}, \ \ x,y\in I\right\},
\end{equation*}
where $\beta \geq 1$, $\ell=\lfloor\beta\rfloor$ and $R>0$, then the unknown function $\sigma^{2}_{|I}$ restricted to the compact interval $I$ can be approximated in the constrained subspace $\mathcal{S}_{K+M,L}$ spanned by the spline basis. This approximation \textcolor{black}{results in} the following bias term:
\begin{equation*}
    \underset{h \in \mathcal{S}_{K+M,L}}{\inf}{\|h - \sigma^{2}_{|I}\|^{2}_{n}} \leq C|I|^{2\beta}K^{-2\beta}
\end{equation*}
where the constant $C > 0$ depends on $\beta, R$ and $M$, and $|I| = \sup I - \inf I$. The above result is a modification of \textit{Lemma D.2} in \cite{denis2020ridge}.

\paragraph{[F] The Fourier basis.}

The subspace $\mathcal{S}_{m}$ can be spanned by the Fourier basis
$$ \{f_{\ell}, ~~ \ell = 0, \ldots, m-1\} = \{1,\sqrt{2}\cos(2\pi jx), \sqrt{2}\sin(2\pi jx),~ j=1,...,d\} ~~ \mathrm{with} ~~ m=2d+1. $$
The above Fourier basis is defined on the compact interval $[0,1]$. The definition can be extended to any compact interval \textcolor{black}{$I \subset \mathbb{R}$}, replacing the bases functions $x \mapsto f_{\ell}(x)$ by $x \mapsto 1/(\max I - \min I)f_{\ell}(\frac{x-\min I}{\max I - \min I})$. We use this basis to build the estimators of $\sigma^{2}$ on a compact interval $I \subset \R$.  

Define for all $s \geq 1$ and for any compact interval $I \subset \R$, the Besov space $\mathcal{B}^{s}_{2,\infty}(I)$ which is a space of functions $f \in L^{2}(I)$ such that the $\lfloor s\rfloor^{th}$ derivative $f^{(\lfloor s \rfloor)}$ belongs to the space $\mathcal{B}^{s-\lfloor s \rfloor}_{2,\infty}(I)$ given by
$$ \mathcal{B}^{s - \lfloor s \rfloor}_{2,\infty}(I) = \left\{f \in L^{2}(I) ~ \mathrm{and} ~ \frac{w_{2,f}(t)}{t^{s - \lfloor s \rfloor}} \in L^{\infty}(I\cap\R^{+})\right\} $$
where for $s-\lfloor s\rfloor \in (0,1)$, $w_{2,f}(t)=\underset{|h|\leq t}{\sup}{\|\tau_{h}f - f\|_2}$ with $\tau_{h}f(x) = f(x-h)$, and for $s-\lfloor s\rfloor = 1$, $w_{2,f}(t)=\underset{|h|\leq t}{\sup}{\|\tau_{h}f + \tau_{-h}f - 2f\|_2}$. Thus, if we assume that the function $\sigma^{2}_{|I}$ belongs to the Besov space $\mathcal{B}^{s}_{2,\infty}$, then it can be approximated in a constrained subspace $\mathcal{S}_{m,L}$ spanned by the Fourier basis. Moreover, under Assumption~\ref{ass:Assumption 1} and from \textit{Lemma 12} in \cite{barron1999risk}, there exists a constant $C>0$ depending on the constant $\tau_1$ of Equation~\eqref{eq:equivalence between the L2 norm and the empirical norm}, the smoothness parameter $s$ of the Besov space such that
\begin{equation*}
    \underset{h\in\mathcal{S}_{m,L}}{\inf}{\left\|h-\sigma^{2}_{|I}\right\|^{2}_{n}} \leq \tau_1\underset{h\in\mathcal{S}_{m,L}}{\inf}{\left\|h-\sigma^{2}_{|I}\right\|^{2}} \leq C\left|\sigma^{2}_{|I}\right|^{2}_{\beta} m^{-2\beta}
\end{equation*}
where $|\sigma^{2}_{|I}|_{s}$ is the semi-norm of $\sigma^{2}_{|I}$ in the Besov space $\mathcal{B}^{s}_{2,\infty}(I)$. 

Note that for all $\beta \geq 1$, the H\"older space $\Sigma_{I}(\beta,R)$ and the Besov space $\mathcal{B}^{\beta}_{2,\infty}$ satisfy:
\begin{equation*}
    L^{\infty}(\R) \cap \Sigma_{I}(\beta,R) \subset \mathcal{B}^{\beta}_{\infty,\infty}(I) \subset \mathcal{B}^{\beta}_{2,\infty}(I)
\end{equation*}
(see \cite{devore1993constructive}, \textit{Chap. 2 page 16}). As a result, we rather consider in the sequel the H\"older space $\Sigma_{I}(\beta,R)$ which can also be approximated by the Fourier basis.

\paragraph{[H] The Hermite basis.}

The basis is defined from the Hermite functions $(h_j,j\geq 0)$ defined on $\mathbb{R}$ and given for all $j\geq 0$ and for all $x\in\mathbb{R}$ by:
\begin{align*}
    h_j(x)=c_jH_j(x), \ \ \mathrm{where} \ \ H_j(x)=(-1)^{j}\exp\left(\frac{x^2}{2}\right)\frac{d^{j}}{dx^{j}}\left(\mathrm{e}^{-x^2/2}\right) \ \ \mathrm{and} \ \ c_j=\left(2^{j}j!\sqrt{\pi}\right)^{-1/2}.
\end{align*}
The polynomials $H_j(x), \ j\geq 0$ are the Hermite polynomials, and $(h_j,j\geq 0)$ is an orthonormal basis of $L^{2}(\mathbb{R})$. Furthermore, \textcolor{black}{for each $j\geq 1$, the function $h_j$ satisfies $|h_j(x)|\leq c|x|\exp(-c_0x^2)$ for all $x \in \mathbb{R}$ such that $x^2\geq(3/2)(4j+3)$}, where $c,c_0>0$ are constants independent of $j$ (see ~\cite{comte2020regression}, Proof of Proposition 3.5). We use the Hermite basis in the sequel for the estimation of $\sigma^{2}$ on the real line $\R$.

If one assumes that $\sigma^{2}$ belongs to the Sobolev space $W^{s}_{f_n}(\R,R)$ given for all $s \geq 1$ by
\begin{equation*}
    W^{s}_{f_n}(\R,R) := \left\{g \in L^{2}(\R, f_n(x)dx),~ \forall~\ell \geq 1,~ \|g - g_{\ell}\|^{2}_{n} \leq R\ell^{-s}\right\}
\end{equation*}
where for each $\ell \geq 1$, $g_{\ell}$ is the $L^{2}(\R, f_n(x)dx)-$orthogonal projection of $g$ on the $\ell-$dimensional vector space $\mathcal{S}_{\ell}$ spanned by the Hermite basis. Consider a compact interval $I \subset \R$ and the following spaces:
\begin{align*}
    W^{s}(I,R) := &~ \left\{g \in L^{2}(I), ~ \sum_{j=0}^{\infty}{j^{s}\left<g,\phi_j\right>^{2} \leq R}\right\},\\
    W^{s}_{f_n}(I,R) := &~ \left\{g \in L^{2}(I, f_n(x)dx),~ \forall~\ell \geq 1,~ \|g - g_{\ell}\|^{2}_{n} \leq R\ell^{-s}\right\}
\end{align*}
where $(\phi_j)_{j\geq 0}$ is an orthonormal basis defined on $I$ and for all $\ell \geq 1$, $g_{\ell}$ is the orthogonal projection of $g$ onto $\mathcal{S}_{\ell} = \mathrm{Span}(h_j,~ j\leq \ell)$ of dimension $\ell \geq 1$ (see \textit{e.g.} \cite{comte2021drift}). Then, for all $g \in W^{s}(I,R)$, we have
$$ g=\sum_{j=0}^{\infty}{\left<g,\phi_j\right>\phi_j} ~~ \mathrm{and} ~~ \|g-g_{\ell}\|^2 = \sum_{j=\ell+1}^{\infty}{\left<g,\phi_j\right>^2} \leq \ell^{-s}\sum_{j=\ell+1}^{\infty}{j^{s}\left<g,\phi_j\right>^2} \leq R\ell^{-s}. $$
We have $W^{s}_{f_n}(I,R) = W^{s}(I,R)$ as the empirical norm $\|.\|_{n}$ and the $L^{2}-$norm $\|.\|$ are equivalent. The space $W^{s}_{f_n}(\R,R)$ is an extension of the space $W^{s}_{f_n}(I,R)$ where $I = \R$ and $(\phi_j)_{j \geq 0}$ is the Hermite basis.

\paragraph{\textcolor{black}{[L] The Laguerre basis.}}

\textcolor{black}{The Laguerre basis $(\ell_j)_{j \geq 1}$ is defined from the Laguerre functions defined on $\mathbb{R}^{+}$ and given for each $j \geq 1$ by
$$\ell_j(x) = \sqrt{2}L_j(2x)\mathrm{e}^{-x}\mathds{1}_{x \geq 0}, ~~ \mathrm{with} ~~ L_j(x) = \sum_{k=0}^{j}{(-1)^{k}\binom{j}{k}\frac{x^k}{k!}}.$$
The collection $[\mathbf{L}]$ is a complete orthonormal basis of $\mathbb{L}^{2}(\mathbb{R}^{+})$.
}

\paragraph{\textcolor{black}{[CS$-$OB] Other orthonormal bases.}}
\textcolor{black}{
We consider any orthonormal basis $\left(\phi_0, \ldots, \phi_{m-1}\right)$ of compactly supported functions that satisfies the following condition:
\begin{equation}\label{eq:Condition-OB}
    \exists ~ r \in \mathbb{N}: ~ \mathfrak{L}(m):= \sum_{j=0}^{m-1}{\left\|\phi_j\right\|^{2}_{\infty}} = O(m^{r}), ~~ \mathrm{and} ~~ \exists ~ r^{\prime} \in \mathbb{N}^{*}: ~ \mathfrak{R}(m):= \sum_{j=0}^{m-1}{\left\|\phi^{\prime}_{j}\right\|^{2}_{\infty}} = O(m^{r^{\prime}}).
\end{equation}
Thus, under Condition~\eqref{eq:Condition-OB}, any function $h \in \mathcal{S}_{m}$ satisfies $\|h\|_{\infty} = O\left(m^{r/2}\|\mathbf{a}\|_2\right)$ and $\|h^{\prime}\|_{\infty} = O\left(m^{r^{\prime}/2}\|\mathbf{a}\|_2\right)$, and, applying the finite increments theorem, there exists a constant $C>0$ such that for all $x,y \in \mathrm{Support}(h)$,
$$\left|h(x) - h(y)\right| \leq Cm^{r^{\prime}/2}\|\mathbf{a}\|_2|x-y|.$$
Among these bases, we have the Legendre basis with $r = r^{\prime} = 2$, the orthonormal basis of piecewise polynomials on $[0,1]$ with $r = r^{\prime} = 2$, the Fourier basis with $r = 1$ and $r^{\prime} = 3$.
}
\begin{remark}
    The \textbf{B}-spline basis is used for the estimation of $\sigma^{2}$ on a compact interval on one side ($N = 1$ and $N>1$), and on the real line on the other side restricting $\sigma^{2}$ on the compact interval $[-\log(n), \log(n)]$ for $N = 1$, or $[-\log(N), \log(N)]$ for $N > 1$, and bounding the exit probability of the process $X$ from the interval $[-\log(N), \log(N)]$ (or $[-\log(n), \log(n)]$) by a negligible term with respect to the estimation error. In a similar context, the Fourier basis is used as an orthonormal basis to built nonparametric estimators of $\sigma^{2}$ on a compact interval and on $\R$, both for $N = 1$ and for $N > 1$. The main goal is to show that, in addition to the spline basis which is not orthogonal, we can build projection estimators of $\sigma^{2}$ on orthonormal bases that are consistent. The advantage of the Hermite basis compared to the Fourier basis is its definition on the real line $\R$. As a result, we use the Hermite basis to propose for $N > 1$, a projection estimator  of $\sigma^{2}$ whose support is the real line $\R$.
\end{remark} 

\begin{remark}
    Denote, by $\mathcal{M}$, the set of possible values of the dimension $m \geq 1$ of the approximation subspace $\mathcal{S}_{m}$. If $\left(\phi_0,\cdots,\phi_{m-1}\right)$ is an orthonormal basis, then for all $m,m^{\prime}\in\mathcal{M}$ such that $m < m^{\prime}$, we have $\mathcal{S}_{m} \subset \mathcal{S}_{m^{\prime}}$. For the case of the \textbf{B}-spline basis, one can find a subset $\mathcal{K} \subset \mathcal{M}$ of the form 
    $$\mathcal{K}=\left\{2^{q}, \ q=0,\cdots,q_{\max}\right\}$$
    such that for all $K,K^{\prime}\in\mathcal{K}$, $K < K^{\prime}$ implies $\mathcal{S}_{K + M} \subset \mathcal{S}_{K^{\prime} + M}$ (see for example \cite{denis2020ridge}). The nesting of subspaces $\mathcal{S}_{m}, \ m\in\mathcal{M}$ is of great importance in the context of adaptive estimation of the diffusion coefficient and the establishment of upper-bounds for the risk of adaptive estimators.
\end{remark}

\textcolor{black}{In the sequel, we denote by $[\mathbf{CS} - \mathbf{OB}]$, $[\mathbf{NCS} - \mathbf{OB}]$ and $[\mathbf{B}]$, the respective collections of subspaces spanned by the orthonormal bases of compactly supported functions, the orthonormal bases of non-compactly supported functions, and the spline basis.  Note that on one side, the collection $[\mathbf{CS} - \mathbf{OB}]$ includes the Fourier basis related to the collection $[\mathbf{F}]$, and on the other side, the collection $[\mathbf{NCS} - \mathbf{OB}]$ includes the collection $[\mathbf{H}]$ for the Hermite basis, and the collection $[\mathbf{L}]$ for the Laguerre basis. Finally, we denote by $[\mathbf{OB}]$, the merger of the two collections $[\mathbf{CS} - \mathbf{OB}]$ and $[\mathbf{NCS} - \mathbf{OB}]$.
}

\subsection{Ridge estimators of the square of the diffusion coefficient}
\label{subsec:RidgeEstimatiors}

\textcolor{black}{We establish from Equation~\eqref{eq:model} and the sample paths $D_{N,n}$ the following regression model for the estimation of $\sigma^{2}$:
\begin{equation}\label{eq:RegModel}
  U^{j}_{k\Delta_n}=\sigma^{2}(X^{j}_{k\Delta_n})+\zeta^{j}_{k\Delta_n}+R^{j}_{k\Delta_n}, ~~ \forall (j,k)\in[\![1,N]\!]\times[\![0,n-1]\!]
\end{equation}
where for each pair $(j,k)\in[\![1,N]\!]\times[\![0,n-1]\!]$, the response variable $U^{j}_{k\Delta_n}$ is given by
$$ U^{j}_{k\Delta_n} := \frac{\left(X^{j}_{(k+1)\Delta_n} - X^{j}_{k\Delta_n}\right)^2}{\Delta_n},$$
and the error terms are respectively given by $\zeta^{j}_{k\Delta}=\zeta^{j,1}_{k\Delta}+\zeta^{j,2}_{k\Delta}+\zeta^{j,3}_{k\Delta}$, with:
\begin{equation*}
    \zeta^{j,1}_{k\Delta_n}=\frac{1}{\Delta_n}\left[\left(\int_{k\Delta_n}^{(k+1)\Delta_n}{\sigma(X^{j}_{s})dW^{j}_{s}}\right)^2-\int_{k\Delta_n}^{(k+1)\Delta_n}{\sigma^{2}(X^{j}_{s})ds}\right],
\end{equation*}
\begin{equation*}
    \zeta^{j,2}_{k\Delta_n}=\frac{2}{\Delta_n}\int_{k\Delta_n}^{(k+1)\Delta_n}{((k+1)\Delta_n-s)\sigma^{\prime}(X^{j}_{s})\sigma^{2}(X^{j}_{s})dW^{j}_{s}},
\end{equation*}
\begin{equation*}
\zeta^{j,3}_{k\Delta_n}=2b(X^{j}_{k\Delta_n})\int_{k\Delta_n}^{(k+1)\Delta_n}{\sigma\left(X^{j}_{s}\right)dW^{j}_{s}},
\end{equation*}
and $R^{j}_{k\Delta_n}
=R^{j,1}_{k\Delta_n}+R^{j,2}_{k\Delta_n}+R^{j,3}_{k\Delta_n}$, with:
\begin{equation*}
    R^{j,1}_{k\Delta_n}=\frac{1}{\Delta_n}\left(\int_{k\Delta_n}^{(k+1)\Delta_n}{b(X^{j}_{s})ds}\right)^2, ~~~~ R^{j,3}_{k\Delta_n} = \frac{1}{\Delta_n}\int_{k\Delta_n}^{(k+1)\Delta_n}{((k+1)\Delta_n-s)\Phi(X^{j}_{s})ds}
\end{equation*}
\begin{equation*}
R^{j,2}_{k\Delta_n}=\frac{2}{\Delta_n}\left(\int_{k\Delta_n}^{(k+1)\Delta_n}{\left(b(X^{j}_{s})-b(X^{j}_{k\Delta_n})\right)ds}\right)\left(\int_{k\Delta_n}^{(k+1)\Delta_n}{\sigma(X^{j}_{s})dW^{j}_{s}}\right),
\end{equation*}
where 
\begin{equation*} \Phi:=2b\sigma^{\prime}\sigma+\left[\sigma^{\prime\prime}\sigma+\left(\sigma^{\prime}\right)^2\right]\sigma^{2}.
\end{equation*}
The increments $U^{j}_{k\Delta_n}$ are approximations in discrete times of $\frac{d\left<X,X\right>_t}{dt}$ since, from Equation~\eqref{eq:model}, one has $d\left<X,X\right>_t = \sigma^{2}(X_t)dt$. The proof of the establishment of the regression model is given in appendix. 
}

We consider the least squares contrast $\gamma_{n,N}$ defined for all $m \in \mathcal{M}$ and for all \textcolor{black}{functions} $h\in\mathcal{S}_{m,L}$ by
\begin{equation*}
    \gamma_{n,N}(h):=\frac{1}{Nn}\sum_{j=1}^{N}{\sum_{k=0}^{n-1}{\left(U^{j}_{k\Delta_n}-h(X^{j}_{k\Delta_n})\right)^2}}.
\end{equation*}
For each dimension $m \in \mathcal{M}$, the projection estimator $\widehat{\sigma}^{2}_{m}$ of $\sigma^{2}$ over the subspace $\mathcal{S}_{m,L}$ \textcolor{black}{is defined as}:
\begin{equation}
\label{eq:non adaptive estimator}
    \widehat{\sigma}^{2}_{m}\in\underset{h\in\mathcal{S}_{m,L}}{\arg\min}{\ \gamma_{n,N}(h)}.
\end{equation}
Indeed, for each dimension $m \in \mathcal{M}$, the estimator $\w{\sigma}^{2}_{m}$ of $\sigma^{2}$ given in Equation~\eqref{eq:non adaptive estimator} satisfies $\w{\sigma}^{2}_{m}=\sum_{\ell=0}^{m-1}{\w{a}_{\ell}\phi_{\ell}}$, where
\begin{equation}\label{eq:ridge estimator-non adaptive}
\widehat{\mathbf{a}}=\left(\widehat{a}_{0},\cdots,\widehat{a}_{m-1}\right):=\underset{\|\mathbf{a}\|^{2}_{2}\leq mL}{\arg\min}{\left\|\mathbf{U}-\mathbf{F}_{m}\mathbf{a}\right\|^{2}_{2}}
\end{equation}
with ${\bf ^{t}U} = \left(U^{1}_{0},\ldots,U^{1}_{(n-1)\Delta_n}, \ldots, U^{N}_{0},\ldots,U^{N}_{(n-1)\Delta_n}\right)$ and the matrix $\mathbf{F}_{m}$ is defined as follows
\begin{equation*}
    {\bf F}_{m} := \left( ^{t}(\phi_{\ell}(X^{j}_{0}),\ldots,\phi_{\ell}(X^{j}_{(n-1)\Delta_n}))\right)_{\underset{1 \leq j \leq N}{0 \leq \ell \leq m-1}} \in \mathbb{R}^{Nn \times m}.
\end{equation*}
The vector of coefficients $\widehat{\mathbf{a}}$ is unique and called the ridge estimator of $\mathbf{a}$ because of the $\ell^{2}$ constraint on the coordinate vectors (see \cite{hastie2001springer} Chap. 3 page 61). 

\section{Estimation of the diffusion coefficient from a single diffusion path}
\label{sec:Estimation-OnePath}

This section focuses on the nonparametric estimation of the square of the diffusion coefficient $\sigma^{2}$ on an interval $I \subseteq \R$ when only a single diffusion path is observed at discrete times ($N=1$). It is proved in the literature that one can construct consistent estimators of the diffusion coefficient from one path when the time horizon $T$ is finite (see \textit{e.g.} \cite{hoffmann1999lp}). Two cases are considered. First, we propose a ridge estimator of $\sigma^{2}$ on a compact interval $I \subset \R$, say for example $I = [-1,1]$. Secondly, we extend the study to the estimation of $\sigma^2$ on the real line $I = \R$. 

\subsection{Non-adaptive estimation of the diffusion coefficient on a compact interval}
\label{subsec:Estimation-CompactInterval-OnePath} 

In this section, we consider the estimator $\w{\sigma}^{2}_{m}$ of the compactly supported square of the diffusion coefficient $\sigma^{2}_{|I}$ on the constrained subspaces $\mathcal{S}_{m,L}$ from the observation of a single diffusion path.

Since the interval $I\subset \R$ is compact, the immediate benefit is that the density function $f_n$ defined from the transition density of the diffusion process $\bar{X} = (X_{k\Delta_n})_{0 \leq k \leq n}$ is bounded from below. In fact, there exist constants $\tau_0,\tau_1\in(0,1]$ such that
\begin{equation*}
    \forall x\in I, \ \ \ \tau_0\leq f_n(x)\leq\tau_1,
\end{equation*}
(see \cite{denis2020ridge}). Thus, for each function $h\in\mathbb{L}^{2}(I)$,
\begin{equation}
\label{eq:equivalence between the L2 norm and the empirical norm}
    \tau_0\|h\|^{2}\leq\|h\|^{2}_{n}\leq\tau_1\|h\|^{2}
\end{equation}
where $\|.\|$ is the $\mathbb{L}^{2}-$norm. Equation~\eqref{eq:equivalence between the L2 norm and the empirical norm} allows establishing global rates of convergence of the risk of the ridge estimators $\w{\sigma}^{2}_{m}$ of $\sigma^{2}_{|I}$ with $m\in\mathcal{M}$ using the $L^{2}-$norm $\|.\|$ which is, in this case, equivalent with the empirical norm $\|.\|_n$.

To establish an upper-bound of the risk of estimation that tends to zero as $n$ tends to infinity, we need to establish equivalence relations between the pseudo-norms $\|.\|_{n,1} ~ (N=1)$ and $\|.\|_{X}$ (\textcolor{black}{see Equation~\eqref{eq:EmpNorms}}) on one side, and $\|.\|_{X}$ and the $L^{2}-$norm $\|.\|$ on the other side.
Define for $x\in\R$, the local time $\mathcal{L}^{x}$ of the diffusion process $X = (X_t)_{t\in[0,1]}$ by
\begin{equation}
\label{eq:LocalTime}
    \mathcal{L}^{x} = \underset{\varepsilon\rightarrow 0}{\lim}{\frac{1}{2\varepsilon}\int_{0}^{1}{\one_{(x-\varepsilon,x+\varepsilon)}(X_s)ds}}.
\end{equation}
In general, the local time of a continuous semimartingale is a.s. {\it càdlàg} (see {\it e.g.} \cite{revuz2013continuous}). But, for diffusion processes and under Assumption~\ref{ass:Assumption 1}, the local time $\mathcal{L}^{x}$ is \textcolor{black}{continuous} at any point $x\in\R$ (see Lemma~\ref{lm:LocalTimeBicontinuous} in Section~\ref{sec:proof}). Furthermore, we obtain the following result.
\begin{lemme}
    \label{lm:OccupationFormula}
    Under Assumption~\ref{ass:Assumption 1}, and for any continuous and integrable function $h$, \textcolor{black}{it holds},
    \begin{enumerate}
        \item $\int_{0}^{1}{h(X_s)ds} = \int_{\R}{h(x)\mathcal{L}^{x}dx}$.
        \item For all $x\in\R$, $\E(\mathcal{L}^{x}) = \int_{0}^{1}{p_{X}(s,x)ds}$.
    \end{enumerate}
\end{lemme}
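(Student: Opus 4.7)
The proof splits into the two assertions stated. For the occupation times identity~(1), I would start from the definition of $\mathcal{L}^x$ as the limit of $A_\varepsilon(x) := (2\varepsilon)^{-1}\int_0^1 \one_{(x-\varepsilon,x+\varepsilon)}(X_s)\, ds$ and apply Fubini to the finite-$\varepsilon$ version:
\[
\int_{\R} h(x)\, A_\varepsilon(x)\, dx \;=\; \int_0^1 \frac{1}{2\varepsilon}\int_{X_s-\varepsilon}^{X_s+\varepsilon} h(y)\, dy\, ds.
\]
Then I would pass to the limit $\varepsilon \to 0^+$ on both sides. On the right, continuity of $h$ yields pointwise convergence of the inner average to $h(X_s)$, and since $\sup_{s \in [0,1]} |X_s| < \infty$ almost surely by~\eqref{eq:ConseqAssumption1}, $h$ is bounded on the range of the path and dominated convergence produces $\int_0^1 h(X_s)\, ds$. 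On the left, the pointwise limit is $\mathcal{L}^x$, and the bicontinuity asserted in Lemma~\ref{lm:LocalTimeBicontinuous} ensures this limit is a continuous function of $x$ with compact support inside $[\inf_s X_s,\sup_s X_s]$; combined with the total-mass control $\int_\R A_\varepsilon(x)\, dx = 1$, this gives enough regularity to exchange limit and integral and to recover $\int_\R h(x)\, \mathcal{L}^x\, dx$.

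For the transition density identity~(2), I would apply~(1) to a continuous compactly supported test function $h$ and take expectations on both sides. By Fubini and the definition of the transition density,
\[
\E\!\left[\int_0^1 h(X_s)\, ds\right] \;=\; \int_0^1 \int_\R h(x)\, p_X(s,x)\, dx\, ds \;=\; \int_\R h(x) \left(\int_0^1 p_X(s,x)\, ds\right) dx.
\]
On the other hand, boundedness of $h$ and the bound $\int_\R \E[\mathcal{L}^x]\, dx \leq 1$ (inherited from $\int_\R A_\varepsilon(x)\,dx = 1$ and Fatou) justify
\[
\E\!\left[\int_\R h(x)\, \mathcal{L}^x\, dx\right] \;=\; \int_\R h(x)\, \E[\mathcal{L}^x]\, dx.
\]
Equating the two displays for every continuous compactly supported $h$ forces $\E[\mathcal{L}^x] = \int_0^1 p_X(s,x)\, ds$ for Lebesgue-almost every $x$; since both sides are continuous in $x$ (the left by the bicontinuity of the local time, the right by continuity of the transition density under Assumption~\ref{ass:Assumption 1}), the identity extends to every $x \in \R$.

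The delicate step is the passage to the limit on the left-hand side of the Fubini identity in~(1): pointwise convergence $A_\varepsilon(x) \to \mathcal{L}^x$ alone is not enough, and a uniform dominating function for $A_\varepsilon$ is not immediate since $A_\varepsilon(x)$ can be as large as $1/(2\varepsilon)$. A clean workaround is to first establish the formula for indicators $h = \one_{(a,b)}$ via a direct Fubini computation, using that $\mathrm{Leb}\{s \in [0,1] : X_s \in \{a,b\}\} = 0$ almost surely (a consequence of the non-degeneracy of $\sigma$), extend by linearity to step functions, and finally approximate any continuous integrable $h$ uniformly on the compact support of $x \mapsto \mathcal{L}^x$; this bypasses the direct dominated-convergence argument altogether.
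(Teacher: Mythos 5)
Your argument for part (1) is essentially the paper's: the paper writes $h(X_s)=\lim_{\varepsilon\to 0}(2\varepsilon)^{-1}\int_{X_s-\varepsilon}^{X_s+\varepsilon}h(y)\,dy$ and then interchanges this limit with both $\int_0^1 ds$ and $\int_\R dx$ by dominated convergence, which is just the transposed form of your Fubini identity; the delicate $x$-side interchange you flag is present, and treated at least as briskly, in the paper, so your indicator-function workaround is if anything the more careful route. Part (2) is where you genuinely diverge. The paper computes $\E[\mathcal{L}^x]$ pointwise: it writes $\E\bigl[(2\varepsilon)^{-1}\int_0^1\one_{(x-\varepsilon,x+\varepsilon)}(X_s)\,ds\bigr]=(2\varepsilon)^{-1}\int_0^1\bigl(P_X(s,x+\varepsilon)-P_X(s,x-\varepsilon)\bigr)\,ds$ and differentiates the cumulative distribution function under the time integral, whereas you deduce (2) from (1) by testing against continuous compactly supported $h$ and identifying densities. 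Your route buys economy (it reuses (1) and never touches the c.d.f.) but only yields the identity for almost every $x$, so it must be completed by a continuity argument; the paper's direct computation gives the identity at each fixed $x$ and needs no such step. The one soft spot in your version is the claim that $x\mapsto\E[\mathcal{L}^x]$ is continuous: almost-sure bicontinuity of $x\mapsto\mathcal{L}^x$ does not by itself pass through the expectation, and you need a dominating variable such as $\sup_{x\in\R}\mathcal{L}^x\in L^1$ (true here, e.g.\ via the Tanaka formula and the Burkholder--Davis--Gundy inequality, but it should be stated). To be fair, the paper's own first equality silently interchanges $\E$ with $\lim_{\varepsilon\to 0}$, which calls for exactly the same kind of domination, so neither proof is fully airtight on this point.
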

In Lemma~\ref{lm:OccupationFormula}, we \textcolor{black}{show} that there is a link between the local time and the transition density of the diffusion process. \textcolor{black}{The proof of the Lemma is provided in appendix}. Thus, if we consider the pseudo-norm $\|.\|_{X}$ depending on the process $X = (X_t)_{t\in[0,1]}$ and given in Equation~\eqref{eq:EmpNorms}, and
using Lemma~\ref{lm:OccupationFormula}, we obtain that,
\begin{equation}
\label{eq:Equiv-NormX-NormL2}
    \E\left[\|h\|_X^2\right] = \int_{\R}{h^{2}(x)\E\left[\mathcal{L}^{x}\right]dx} = \int_{\R}{h^{2}(x)\int_{0}^{1}{p_X(s,x)ds}dx} \geq \tau_0\|h\|^{2}. 
\end{equation}
where $\int_{0}^{1}{p_X(s,x)ds} \geq \tau_0 >0$ (see \cite{denis2020ridge}, \textit{Lemma 4.3}), and $\|h\|^{2}$ is the $\mathbb{L}^{2}-$norm of $h$. 

\begin{theo}
    \label{thm:RiskBound-OnePath}
    Set $L = \log(n)$ \textcolor{black}{and $n \rightarrow \infty$}. Suppose that $\sigma^{2}$ is approximated in one of the collections \textcolor{black}{$[\mathbf{CS-OB}]$} and \textcolor{black}{$[\mathbf{B}]$}. Under Assumption~\ref{ass:Assumption 1}, \textcolor{black}{it holds} 
    \begin{align*}
        & \E\left[\left\|\w{\sigma}^{2}_{m} - \sigma^{2}_{|I}\right\|^{2}_{n,1}\right] \leq 3\underset{h\in\mathcal{S}_{m,L}}{\inf}{\left\|h-\sigma^{2}_{|I}\right\|^{2}_{n}}+C\left(\textcolor{black}{\frac{m^{\alpha^{\prime}}}{n} + \frac{m^{\alpha\gamma+1}\log(n)}{n^{\gamma/2}}} + \Delta^{2}_{n}\right) \\
        & \E\left[\left\|\w{\sigma}^{2}_{m} - \sigma^{2}_{|I}\right\|^{2}_{n}\right] \leq \frac{34\tau_1}{\tau_0}\underset{h\in\mathcal{S}_{m,L}}{\inf}{\left\|h-\sigma^{2}_{|I}\right\|^{2}_{n}}+C^{\prime}\left(\textcolor{black}{\frac{m^{\alpha^{\prime}}}{n} + \frac{m^{\alpha\gamma+1}\log(n)}{n^{\gamma/2}}} + \Delta^{2}_{n}\right)
    \end{align*}
    where \textcolor{black}{the numerical constant $\gamma$ can take any value of the interval $(1,+\infty)$}, \textcolor{black}{with $\alpha = 2$ and $\alpha^{\prime} = 1$ for the collection $[\mathbf{B}]$, $\alpha = (r+r^{\prime})/2$ and $\alpha^{\prime} = r$ for the collection $[\mathbf{CS} - \mathbf{OB}]$}. The constant $C>0$ depends on $\sigma_1$ and the constant $C^{\prime}>0$ depends on $\sigma_1, \tau_0$ and $\tau_1$.
\end{theo}

We observe that the upper-bound of the risk of estimation of $\w{\sigma}^{2}_{m}$ is composed of the bias term, which quantifies the cost of approximation of $\sigma^{2}_{|I}$ in the constrained space $\mathcal{S}_{m,L}$, the estimation error \textcolor{black}{$\mathrm{O}(m^{\alpha^{\prime}}/n)$} and the cost of the time discretization $\mathrm{O}(\Delta^{2}_{n})$ are established on a random event in which the pseudo-norms $\|.\|_{n,1}$ and $\|.\|_X$ are equivalent, and whose probability of the complementary times $\left\|\w{\sigma}^{2}_{m} - \sigma^{2}_{|I}\right\|^{2}_{\infty}$ is bounded by the term \textcolor{black}{$\mathrm{O}\left(\frac{m^{\alpha\gamma+1}\log(n)}{n^{\gamma/2}}\right)$} (see Lemma~\ref{lm:Proba-OmegaComp-OnePath} and proof of Theorem~\ref{thm:RiskBound-OnePath}). 

\textcolor{black}{Note that for the two collections $[\mathbf{B}]$ and $[\mathbf{F}]$, we have $\alpha = 2$, and $\alpha^{\prime} = 1$.} The next result proves that \textcolor{black}{for the collections $[\mathbf{B}]$ and $[\mathbf{F}]$,} the risk of estimation can reach a rate of convergence of the same order \textcolor{black}{as} the rate established in \cite{hoffmann1999lp} if the parameter $\gamma > 1$ is chosen such that the term $\mathrm{O}(m^{2\gamma+1}\log(n)/n^{\gamma/2})$ is \textcolor{black}{negligible with respect to} the estimation error of order $m/n$. \textcolor{black}{Let us also note} that the risk $\left\|\w{\sigma}^{2}_{m} - \sigma^{2}_{|I}\right\|^{2}_{n}$ is random since 
$$ \left\|\w{\sigma}^{2}_{m} - \sigma^{2}_{|I}\right\|^{2}_{n} = \E_{X}\left[\frac{1}{n}\sum_{k=0}^{n-1}{(\w{\sigma}^{2}_{m} - \sigma^{2}_{|I})(X_{k\Delta_n})}\right] $$
and the estimator $\w{\sigma}^{2}_{m}$ is built from an independent copy $\bar{X}^{1}$ of the discrete times process $\bar{X}$. Thus, the expectation $\E$ corresponds to the joint distribution of the random couple $(X^{1},X)$, and so relates to the estimator $\w{\sigma}^{2}_{m}$.

\begin{coro}
\label{coro:Rate-OnePath}
Suppose that $\sigma^{2} \in \Sigma_{I}(\beta,R)$ with $\beta > 3/2$, and \textcolor{black}{$\gamma = 4(2\beta+1)/(2\beta-3)$}. Assume that $K_{\mathrm{opt}} \propto n^{1/(2\beta+1)}$ for $[\mathbf{B}]$ ($m_{\mathrm{opt}} = K_{\mathrm{opt}} + M$), $m_{\mathrm{opt}} \propto n^{1/(2\beta+1)}$ for $[\mathbf{F}]$, \textcolor{black}{and $n \rightarrow \infty$}. Under Assumptions~\ref{ass:Assumption 1}, \textcolor{black}{it holds},
\begin{align*}
    & \E\left[\left\|\w{\sigma}^{2}_{m_{\mathrm{opt}}} - \sigma^{2}_{|I}\right\|^{2}_{n,1}\right] = \textcolor{black}{\mathrm{O}\left(n^{-2\beta/(2\beta+1)}\right)}\\
    & \E\left[\left\|\w{\sigma}^{2}_{m_{\mathrm{opt}}} - \sigma^{2}_{|I}\right\|^{2}_{n}\right] = \textcolor{black}{\mathrm{O}\left(n^{-2\beta/(2\beta+1)}\right)}.
\end{align*}
\end{coro}

Note that we obtain the exact same rates when considering the risk of $\w{\sigma}^{2}_{m_{\mathrm{opt}}}$ defined with the $\mathbb{L}^{2}-$norm equivalent to the empirical norm $\|.\|_n$. Moreover, these rates of convergence are of the same order \textcolor{black}{as} the optimal rate $n^{-s/(2s+1)}$ established in \cite{hoffmann1999lp} over a Besov ball.

\subsection{Non-adaptive estimation of the diffusion coefficient on \textcolor{black}{non-compact interval}}
\label{subsec:Estimation-R-OnePath}

In this section, we propose a ridge estimator of $\sigma^{2}$ on \textcolor{black}{a non-compact interval $I = \mathcal{I}$}, built from one diffusion path. In this context, the main drawback is that the density function $f_n:x\mapsto\frac{1}{n}\sum_{k=0}^{n-1}{p_X(k\Delta_n,x)}$ is no longer lower bounded. Consequently, the empirical norm $\|.\|_n$ is no longer equivalent to the $L_2-$norm $\|.\|$ and the consistency of the estimation error is no longer ensured under the \textcolor{black}{assumptions} made in the previous sections. Consider the truncated estimator $\w{\sigma}^{2}_{m,L}$ of $\sigma^{2}$ given by 
\begin{equation}
\label{eq:Truncated-Estimator}
    \w{\sigma}^{2}_{m,L}(x) = \w{\sigma}^{2}_{m}(x)\one_{\w{\sigma}^{2}_{m}(x) \leq \sqrt{L}} + \sqrt{L}\one_{\w{\sigma}^{2}_{m}(x) > \sqrt{L}}.
\end{equation}
\textcolor{black}{For the specific case of approximation spaces spanned by bases of compactly supported functions, we proceed to the dilation of the bases and consider the following decomposition of the risk of the ridge estimator $\w{\sigma}^{2}_{m,L}$:}
\begin{align*}
    \E\left[\left\|\w{\sigma}^{2}_{m,L} - \sigma_{\textcolor{black}{|\mathcal{I}}}^{2}\right\|^{2}_{n,1}\right] \leq &~ \E\left[\left\|(\w{\sigma}^{2}_{m,L} - \sigma_{\textcolor{black}{|\mathcal{I}}}^{2})\one_{[-\log(n),\log(n)]}\right\|^{2}_{n,1}\right] + \E\left[\left\|(\w{\sigma}^{2}_{m,L} - \sigma_{\textcolor{black}{|\mathcal{I}}}^{2})\one_{[-\log(n),\log(n)]^{c}}\right\|^{2}_{n,1}\right] \\
    \leq &~ \E\left[\left\|(\w{\sigma}^{2}_{m,L} - \sigma_{\textcolor{black}{|\mathcal{I}}}^{2})\one_{[-\log(n),\log(n)]}\right\|^{2}_{n,1}\right] + 4\textcolor{black}{L}\underset{t\in[0,1]}{\sup}{\P(|X_t|>\log(n))}.
\end{align*}
The first term on the {\it r.h.s.} is equivalent to the risk of a ridge estimator of $\sigma^{2}$ on the compact interval $[-\log(n),\log(n)]$. The second term on the {\it r.h.s.} is upper-bounded using Lemma~\ref{lem:controleSortiCompact-bis}. \textcolor{black}{The estimation interval $\mathcal{I}$ is set to $\mathcal{I} = \mathbb{R}^{+}$ for the collection $[\mathbf{L}]$, and $\mathcal{I} = \mathbb{R}$ for the collection $[\mathbf{H}]$ and any chosen basis of compactly supported functions like those related to the collections $[\mathbf{B}]$ and $[\mathbf{F}]$, and in this case, $\sigma_{|\mathcal{I}}^{2} = \sigma^{2}$}. We derive below, an upper-bound of the risk of estimation of $\w{\sigma}^{2}_{m}$.

\begin{theo}
    \label{thm:ConstFromOnePath}
    Suppose that $L = \log^{2}(n)$ \textcolor{black}{and $n \rightarrow \infty$}. Under Assumption~\ref{ass:Assumption 1}, \textcolor{black}{it holds},
    \begin{align*}
       \E\left[\left\|\w{\sigma}^{2}_{m,L} - \sigma_{\textcolor{black}{|\mathcal{I}}}^{2}\right\|^{2}_{n,1}\right] \leq \underset{h\in\mathcal{S}_{m,L}}{\inf}{\|h-\sigma^{2}\|^{2}_{n}}+C\sqrt{\frac{m^{q}\log^{2}(n)}{n}}
    \end{align*}
    where $C>0$ is a constant, $q = 1$ for the collection $[\mathbf{B}]$, and $q = 2$ for the collection \textcolor{black}{$[\mathbf{OB}]$}.
\end{theo}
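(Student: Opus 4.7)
The strategy follows the decomposition displayed immediately before the statement, which isolates the contribution of the growing compact window $I_n := [-\log(n), \log(n)]$ from its tail. The tail piece $4\log^{2}(n)\,\sup_{t}\P(|X_t| > \log(n))$ is immediate from Markov's inequality applied to the polynomial moment bound \eqref{eq:ConseqAssumption1} (or the sharper sub-Gaussian tail of Lemma~\ref{lem:controleSortiCompact}): it contributes $O(\log^{2}(n)/n^{p})$ for any $p \geq 1$, which is negligible compared to the announced $\sqrt{m^{q}\log^{2}(n)/n}$. Consequently, the task reduces to bounding $\E\bigl[\|(\w{\sigma}^{2}_{m,L}-\sigma^{2})\one_{I_n}\|^{2}_{n,1}\bigr]$, which coincides with the $\|\cdot\|_{n,1}$-risk on a compact interval of length $|I_n| = 2\log(n)$.

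For this compact piece, I would adapt the proof of Theorem~\ref{thm:RiskBound-OnePath} to $I_n$. Starting from the regression equation $U^{1}_{k\Delta_n} = \sigma^{2}(X^{1}_{k\Delta_n}) + \zeta^{1}_{k\Delta_n} + R^{1}_{k\Delta_n}$ and the least-squares definition \eqref{eq:non adaptive estimator} of $\w{\sigma}^{2}_{m}$, the usual bias-plus-noise-plus-remainder decomposition produces the leading bias $\inf_{h \in \mathcal{S}_{m,L}} \|h - \sigma^{2}\|_{n}^{2}$ (using that any $h \in \mathcal{S}_{m,L}$ is supported in $I_n$). On a "good event" $\Omega_n$ on which $\|\cdot\|_{n,1}$ is comparable to $\|\cdot\|_{n}$ uniformly over $\mathcal{S}_{m,L}$, a standard empirical-process bound controls the noise term by $C m/n$. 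On the complement $\Omega_n^c$, the truncation $|\w{\sigma}^{2}_{m,L}| \leq \sqrt{L} = \log(n)$ combined with $|\sigma^{2}| \leq \sigma_{1}^{2}$ yields the deterministic estimate $\|\w{\sigma}^{2}_{m,L} - \sigma^{2}\|_{n,1}^{2} \leq C\log^{2}(n)$; applying Cauchy--Schwarz in the form $\E[Y\one_{\Omega_n^c}] \leq \sqrt{\E[Y^{2}]\,\P(\Omega_n^c)}$ then produces a contribution of size $C\log(n)\sqrt{\P(\Omega_n^c)}$, whose square root is the source of the announced rate.

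The key technical step is then to bound $\P(\Omega_n^c) \leq C m^{q}/n$ by a Bernstein- or Talagrand-type concentration inequality uniform over the unit ball of $\mathcal{S}_{m,L}$ in $\|\cdot\|_{n,1}$. This is the main obstacle, because both the $\ell^{\infty}$-norms of basis functions and the lower bound on $f_n$ on $I_n$ degrade with the interval length $|I_n| = 2\log(n)$. The basis-dependent exponent $q$ traces back to the sup-to-$L^{2}$ norm ratio of elements of $\mathcal{S}_{m,L}$ on $I_n$: the locally supported \textbf{B}-splines, of scale $|I_n|/K$, admit the sharper bound yielding $q=1$, whereas the globally supported Fourier functions only give $q=2$. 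Once this probability bound is obtained with the correct dependence on $|I_n|$, combining the tail, bias, noise and complement contributions yields the stated $C\sqrt{m^{q}\log^{2}(n)/n}$.
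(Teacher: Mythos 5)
Your outer decomposition (compact window $[-\log(n),\log(n)]$ plus an exponentially small exit-probability tail) is exactly the paper's first step. But the core of your argument — a good event $\Omega_n$ on which $\|\cdot\|_{n,1}$ is comparable to $\|\cdot\|_{n}$ uniformly over $\mathcal{S}_{m,L}$, with the rate coming from $\sqrt{\P(\Omega_n^c)}$ via Cauchy--Schwarz on the complement — is precisely the mechanism that is unavailable here, and the paper's discussion after the theorem says so explicitly. For $N=1$ there is no averaging over independent replicates, so the only comparison one can make is between $\|\cdot\|_{n,1}$ and the occupation norm $\|\cdot\|_{X}$ of the same path (Lemma~\ref{lm:Proba-OmegaComp-OnePath}); passing from $\|\cdot\|_{X}$ to $\|\cdot\|_{n}$ then requires the lower bound $\int_0^1 p_X(s,x)\,ds \geq \tau_0$, and on the growing window this constant degrades like $\exp(-c\log^2(n))$ (Proposition~\ref{prop:densityTransition}), so the resulting "equivalence" is useless. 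You acknowledge that the norm ratio and the density lower bound degrade with $|I_n|$, but you leave this as the "main obstacle" without resolving it — and it cannot be resolved along the lines you propose, which is why the paper abandons the good-event strategy for this theorem.

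What the paper actually does is cruder and bypasses $\Omega_n$ entirely: starting from $\gamma_{n,1}(\w{\sigma}^2_{m,L}) \leq \gamma_{n,1}(h)$ it keeps the cross terms $\E[\nu_i(\w{\sigma}^{2}_{m,L}-h)]$ as they are and bounds each one by Cauchy--Schwarz on the coefficient vector, $\E[\nu_i(\w{\sigma}^{2}_{m,L}-h)] \leq \sqrt{2mL}\,\bigl(\sum_{\ell=0}^{m-1}\E[\nu_i^2(\phi_\ell)]\bigr)^{1/2}$, exploiting the ridge constraint $\|\mathbf{a}\|_2^2 \leq mL = m\log^2(n)$. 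The square root in the announced rate is the square root in this inequality, not $\sqrt{\P(\Omega_n^c)}$. The exponent $q$ then comes from $\sum_{\ell}\E[\nu_1^2(\phi_\ell)] \leq C/n$ for the splines (because $\sum_{\ell}B_\ell^2 \leq 1$ pointwise) versus $Cm/n$ for the Fourier basis (because $\sum_{\ell}\phi_\ell^2 \leq Cm$), giving $\sqrt{mL/n}$ and $\sqrt{m^2L/n}$ respectively; your attribution of $q$ to the sup-to-$L^2$ ratio of elements of $\mathcal{S}_{m,L}$ is in the right spirit but the mechanism is this pointwise bound on $\sum_\ell \phi_\ell^2$, not a covering or concentration argument. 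Note also that even if your concentration step could be carried out, your bad-event contribution $C\log^2(n)\sqrt{\P(\Omega_n^c)}$ with $\P(\Omega_n^c)\leq Cm^q/n$ would give $\sqrt{m^q\log^4(n)/n}$, a log factor worse than the statement.
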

We first remark that the upper-bound of the risk of the truncated estimator of $\sigma^{2}$ differs with respect to each of the chosen bases. This contrast comes from the fact that \textcolor{black}{any orthonormal basis $\{\phi_{\ell},~ \ell = 0, \ldots, m-1\}$} and the spline basis $\{B_{\ell-M}, ~ \ell = 0, \ldots, m-1\}$ \textcolor{black}{respectively} satisfy
$$ \textcolor{black}{\sum_{\ell = 0}^{m - 1}{\phi_{\ell}(x)} \leq C_{\phi}m}, ~ \mathrm{and} ~ \sum_{\ell=0}^{m-1}{B_{\ell-M}(x)} = 1. $$
Secondly, the estimation error is not as fine as the one established in Theorem~\ref{thm:RiskBound-OnePath} where $\sigma^{2}$ is estimated on a compact interval. In fact, on the real line $\R$, the pseudo-norm $\|.\|_X$ can no longer be equivalent to the $\mathbb{L}^{2}-$norm since the transition density is not bounded from below on $\R$. Consequently, we cannot take advantage of the exact method used to establish the risk bound obtained in Theorem~\ref{thm:RiskBound-OnePath} which uses the equivalence relation between the pseudo-norms $\|.\|_{n,1}$ and $\|.\|_X$ on one side, and $\|.\|_X$ and the $\mathbb{L}^2-$norm $\|.\|$ on the other side. Moreover, we can also notice that the term of order $1/n^2$ does not appear since it is dominated by the estimation error.

We obtain below rates of convergence of the ridge estimator of $\sigma^{2}$ for each of the collections $[\mathbf{B}]$ and $[\mathbf{F}]$.
\begin{coro}
    \label{coro:Rate-OnePath-NonCompact}
    Suppose that $\sigma^{2} \in \Sigma_{I}(\beta,R)$ with $\beta \geq 1$ 
    \paragraph{For [\textbf{B}].}
    Assume that $K \propto n^{1/(4\beta+1)}$ \textcolor{black}{and $n \rightarrow \infty$}. Under Assumptions~\ref{ass:Assumption 1}, there exists a constant $C>0$ depending on $\beta$ and $\sigma_1$ such that
    $$ \E\left[\left\|\w{\sigma}^{2}_{m,L} - \sigma^{2}\right\|^{2}_{n,1}\right] \leq C\log^{2\beta}(n)n^{-2\beta/(4\beta+1)}. $$
    \paragraph{For [\textbf{F}].}
    Assume that $m \propto n^{1/2(2\beta+1)}$ \textcolor{black}{and $n \rightarrow \infty$}. Under Assumptions~\ref{ass:Assumption 1}, \textcolor{black}{it holds},
    $$ \E\left[\left\|\w{\sigma}^{2}_{m,L} - \sigma^{2}\right\|^{2}_{n,1}\right] \leq C\log(n)n^{-\beta/(2\beta+1)} $$
    where the constant $C>0$ depends on $\beta$ and $\sigma_1$.
\end{coro}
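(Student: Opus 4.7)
The proof is a standard bias/variance trade-off: substitute the H\"older approximation rates into Theorem~\ref{thm:ConstFromOnePath} and pick the dimension that balances the two terms. Since the risk bound in Theorem~\ref{thm:ConstFromOnePath} has already been derived by reducing the analysis to the compact interval $I_n := [-\log(n), \log(n)]$ (via the truncation at $\sqrt{L}$ and the tail control of $X$ coming from Lemma~\ref{lem:controleSortiCompact}), the bias term effectively involves approximating $\sigma^{2}_{|I_n}$, to which the smoothness assumption $\sigma^{2}\in\Sigma_{I}(\beta,R)$ applies with the same constant $R$.

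For the collection $[\mathbf{B}]$, the plan is to invoke the spline approximation bound~\eqref{eq:UpperBound-BayesTerm} on the interval $I_n$ of length $2\log(n)$, giving
$$\underset{h\in\mathcal{S}_{K+M,L}}{\inf}\,\|h-\sigma^{2}_{|I_n}\|^{2}_{n}\leq C(\log n)^{2\beta}K^{-2\beta}.$$
Combined with the variance-type term $\sqrt{K\log^{2}(n)/n}$ from Theorem~\ref{thm:ConstFromOnePath} (with $q=1$), the choice $K\propto n^{1/(4\beta+1)}$ yields a variance of order $\log(n)\,n^{-2\beta/(4\beta+1)}$ and a bias of order $(\log n)^{2\beta}n^{-2\beta/(4\beta+1)}$. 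Since $\beta\geq 1$, the bias term dominates, which produces the claimed rate.

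For the collection $[\mathbf{F}]$, I would use the embedding $L^{\infty}\cap\Sigma_{I_n}(\beta,R)\subset\mathcal{B}^{\beta}_{2,\infty}(I_n)$ stated after~\eqref{eq:BiasBesov}, together with the Besov approximation bound~\eqref{eq:BiasBesov}, to obtain $\inf_{h\in\mathcal{S}_{m,L}}\,\|h-\sigma^{2}_{|I_n}\|^{2}_{n}\leq Cm^{-2\beta}$. With $q=2$, the variance term from Theorem~\ref{thm:ConstFromOnePath} becomes $m\log(n)/\sqrt{n}$. The choice $m\propto n^{1/(2(2\beta+1))}$ then balances the bias $n^{-\beta/(2\beta+1)}$ against the variance $\log(n)\,n^{-\beta/(2\beta+1)}$, yielding the stated bound.

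The main technical subtlety concerns the Fourier case: one has to check that the $\log(n)$ growth of the interval $I_n$ does not pollute the constant in front of $m^{-2\beta}$. This reduces to verifying that the Besov/H\"older semi-norm of $\sigma^{2}_{|I_n}$ is controlled uniformly in $n$ (so that the constant in~\eqref{eq:BiasBesov} depends only on $\beta$ and $\sigma_1$ via Assumption~\ref{ass:Assumption 1}), and that the rescaling of the Fourier basis to $I_n$ contributes only factors absorbable into the rate. In the $[\mathbf{B}]$ case this subtlety manifests explicitly as the $(\log n)^{2\beta}$ factor in the final bound through~\eqref{eq:UpperBound-BayesTerm}, whereas in the $[\mathbf{F}]$ case the argument is arranged so that only a single $\log(n)$ factor remains, coming from the variance term.
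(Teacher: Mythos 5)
Your proposal is correct and follows essentially the same route as the paper: plug the spline approximation bound (with the $(\log n)^{2\beta}$ factor from the interval $[-\log(n),\log(n)]$) respectively the Besov bound of \textit{Lemma 12} of \cite{barron1999risk} into the risk bound of Theorem~\ref{thm:ConstFromOnePath}, and check that $K\propto n^{1/(4\beta+1)}$ (resp.\ $m\propto n^{1/(2(2\beta+1))}$) equalizes the two terms up to logarithms. The subtlety you flag for $[\mathbf{F}]$ --- uniform-in-$n$ control of the Besov semi-norm of $\sigma^{2}$ on the growing interval --- is exactly the point the paper disposes of by asserting $|\sigma^{2}|_{\beta}<\infty$ under Assumption~\ref{ass:Assumption 1}, so your treatment is, if anything, slightly more careful.
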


As we can remark, the obtained rates are slower than the ones established in Section~\ref{subsec:Estimation-CompactInterval-OnePath} where $\sigma^{2}$ is estimated on a compact interval. This result is the immediate consequence of the result of Theorem~\ref{thm:ConstFromOnePath}. \textcolor{black}{At this stage, we cannot position ourselves on the optimality or not of the obtained rates of convergence, as a lower-bound on the risk of estimation is not provided in this paper. The establishment of a lower-bound could be obtained by adapting the method provided in \cite{hoffmann1999lp} to the case of a non-compactly supported diffusion coefficient. This study is beyond the scope of this paper, and will be subject to further investigations.} 

\section{Estimation of the diffusion coefficient from repeated diffusion paths}
\label{sec:Estimation-N.paths}

We now focus on the estimation of the (square) of the diffusion coefficient from i.i.d. discrete observations of the diffusion process ($N \rightarrow \infty$).

\subsection{Non-adaptive estimation of the diffusion coefficient on a compact interval}
\label{subsec:Rate-CompInterval-N.paths}

We study the rate of convergence of the ridge estimators $\w{\sigma}^{2}_{m}$ of $\sigma^{2}_{|I}$ from $D_{N,n}$ when $I$ is a compact interval. The next theorem gives an upper-bound of the risk of our estimators $\w{\sigma}^{2}_{m},~ m\in\mathcal{M}$.

\begin{theo}
\label{thm:RiskBound-CompactSupport}
Suppose that $L = \log(Nn)$, \textcolor{black}{$n,N \rightarrow \infty$} and $\mathcal{M} = \left\{1,\ldots,\sqrt{\min(n,N)}/\log(Nn)\right\}$. Under Assumption~\ref{ass:Assumption 1} and for all $m \in \mathcal{M}$, there exist constants $C>0$ and $C^{\prime}>0$ depending on $\sigma_1$ such that,
\begin{align*}
     \mathbb{E}\left[\left\|\widehat{\sigma}^{2}_{m}-\sigma^{2}_{|I}\right\|^{2}_{n,N}\right]\leq &~ 3\underset{h\in\mathcal{S}_{m,L}}{\inf}{\left\|h-\sigma^{2}_{|I}\right\|^{2}_{n}}+C\left(\frac{m}{Nn}+m\log(Nn)\exp\left(-C\sqrt{\min(n,N)}\right)+\Delta^{2}_{n}\right) \\
    \mathbb{E}\left[\left\|\widehat{\sigma}^{2}_{m}-\sigma^{2}_{|I}\right\|^{2}_{n}\right]\leq &~ 34\underset{h\in\mathcal{S}_{m,L}}{\inf}{\left\|h-\sigma^{2}_{|I}\right\|^{2}_{n}} + C^{\prime}\left(\frac{m}{Nn}+m\log(Nn)\exp\left(-C\sqrt{\min(n,N)}\right)+\Delta^{2}_{n}\right).
\end{align*}
\end{theo}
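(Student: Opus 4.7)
The plan is to mimic the argument used for Theorem~\ref{thm:RiskBound-OnePath}, but replace the Hölder-inequality control of the complementary event by a concentration inequality that exploits the independence across the $N$ paths. I would start from the regression decomposition
$U^{j}_{k\Delta_n}=\sigma^{2}(X^{j}_{k\Delta_n})+\zeta^{j}_{k\Delta_n}+R^{j}_{k\Delta_n}$
and combine it with the minimization property $\gamma_{n,N}(\widehat{\sigma}^{2}_{m})\leq \gamma_{n,N}(h)$ for every $h\in\mathcal{S}_{m,L}$. Expanding the squares gives, for every such $h$,
\begin{equation*}
\|\widehat{\sigma}^{2}_{m}-\sigma^{2}_{|I}\|^{2}_{n,N}\leq \|h-\sigma^{2}_{|I}\|^{2}_{n,N}+\tfrac{2}{Nn}\sum_{j,k}(\widehat{\sigma}^{2}_{m}-h)(X^{j}_{k\Delta_n})\bigl(\zeta^{j}_{k\Delta_n}+R^{j}_{k\Delta_n}\bigr).
\end{equation*}
Using $2ab\leq \tfrac{1}{4}a^2+4b^2$ sends $\tfrac14\|\widehat{\sigma}^{2}_{m}-h\|^{2}_{n,N}$ back to the left-hand side, leaving a linear functional $\nu_{n,N}(t):=\tfrac{1}{Nn}\sum_{j,k}t(X^{j}_{k\Delta_n})\zeta^{j}_{k\Delta_n}$ to be controlled uniformly over the unit ball of $\mathcal{S}_{m,L}$, and a residual term treated by Cauchy--Schwarz and the moment bound $\mathbb{E}[R^{j,2}_{k\Delta_n}]=\mathrm{O}(\Delta_n^2)$ that follows from Assumption~\ref{ass:Assumption 1} and \eqref{eq:ConseqAssumption1}.

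The key step is to introduce the event
\begin{equation*}
\Omega_{n,N}:=\left\{\forall\,t\in\mathcal{S}_{m,L}\setminus\{0\},\ \bigl|\|t\|^{2}_{n,N}/\|t\|^{2}_{n}-1\bigr|\leq 1/2\right\}
\end{equation*}
on which $\|.\|_{n,N}$ and $\|.\|_{n}$ are equivalent. Because $\|t\|_{\infty}\leq \max_{\ell}\|\phi_\ell\|_\infty\sqrt{mL}$ on $\mathcal{S}_{m,L}$, a Bernstein inequality applied path-by-path, combined with a discretization of the unit ball of $\mathcal{S}_{m,L}$ of cardinality $(cm)^{m}$, yields
$\P(\Omega_{n,N}^c)\leq c\exp(-C\sqrt{\min(n,N)})$ provided $m\,L=m\log(Nn)\leq c\sqrt{\min(n,N)}$, which is exactly the restriction imposed through $\mathcal{M}$ and $L$. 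On $\Omega_{n,N}$, decomposing $t=\sum a_\ell \phi_\ell$ along the basis, the martingale structure of $(\zeta^{j}_{k\Delta_n})$ and the bound $\mathbb{E}[(\zeta^{j}_{k\Delta_n})^2]=\mathrm{O}(\sigma_1^4)$ give
$\mathbb{E}\bigl[\sup_{t\in\mathcal{S}_{m,L},\|t\|_n\leq 1}\nu_{n,N}(t)^2\bigr]=\mathrm{O}(m/(Nn))$.

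On the complement $\Omega^{c}_{n,N}$, the $\ell^{2}$-constraint on the ridge coefficients forces $\|\widehat{\sigma}^{2}_{m}\|_{\infty}^{2}\leq \max_\ell\|\phi_\ell\|_\infty^{2}\,mL$, so $\|\widehat{\sigma}^{2}_{m}-\sigma^{2}_{|I}\|^{2}_{n,N}\leq C\,mL+2\sigma_1^{4}$; multiplying by $\P(\Omega_{n,N}^c)$ produces the $m\log(Nn)\exp(-C\sqrt{\min(n,N)})$ term. Taking $h$ to be the best approximant in $\mathcal{S}_{m,L}$ gives the bias $\inf_{h\in\mathcal{S}_{m,L}}\|h-\sigma^{2}_{|I}\|^{2}_{n}$ (passing from $\|.\|_{n,N}$ to $\|.\|_n$ uses independence of $(X^j)$ of the optimizer $h$), and combining the three contributions yields the first inequality. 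The second inequality follows by noting that on $\Omega_{n,N}$ one has $\|\widehat{\sigma}^{2}_{m}-\sigma^{2}_{|I}\|^{2}_{n}\leq 2\|\widehat{\sigma}^{2}_{m}-\sigma^{2}_{|I}\|^{2}_{n,N}$, and tracking the resulting constants carefully gives the factor $34$.

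The main obstacle is the uniform concentration of $\|.\|^{2}_{n,N}$ around $\|.\|^{2}_{n}$ on the dimension-$m$ class; this is what forces the calibration $mL\lesssim\sqrt{\min(n,N)}$ and determines the exponential factor in the bad-event remainder. All other ingredients (the martingale variance bound, the $\mathrm{O}(\Delta_n^{2})$ residual, and the passage from $\|.\|_{n,N}$ to $\|.\|_n$) follow essentially the scheme of Theorem~\ref{thm:RiskBound-OnePath}, simplified by the averaging over $N$ independent paths.
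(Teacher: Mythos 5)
Your proposal is correct and follows essentially the same route as the paper: the same least-squares decomposition with the martingale variance bound $\mathrm{O}(m/(Nn))$ and residual $\mathrm{O}(\Delta_n^2)$, the same norm-equivalence event $\Omega_{n,N,m}$ handled by an $\varepsilon$-net of the unit ball of $\mathcal{S}_m$ (the paper uses Hoeffding where you invoke Bernstein, an immaterial difference here), the same $2mL\,\P(\Omega_{n,N,m}^c)$ treatment of the bad event, and the same passage to the $\|\cdot\|_n$ bound via the equivalence of norms on $\Omega_{n,N,m}$ yielding the constant $34$.
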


Note that the result of Theorem~\ref{thm:RiskBound-CompactSupport}~is independent of the choice of the basis that \textcolor{black}{generates} the approximation space $\mathcal{S}_{m}$. The first term on the right-hand side represents the approximation error of the initial space, the second term $\mathrm{O}\left(m/(Nn)\right)$ is the estimation error, and the last term characterizes the cost of the time discretization. \textcolor{black}{The third term on the right-hand side is negligible with respect to the estimation error, and is derived from the result of Lemma~\ref{lm:Proba-OmegaComp}. Note that Theorem~\ref{thm:RiskBound-CompactSupport} does not extend Theorem~\ref{thm:RiskBound-OnePath} since Theorem~\ref{thm:RiskBound-CompactSupport}, contrary to Theorem~\ref{thm:RiskBound-OnePath}, holds for any compactly supported basis, takes advantage of the independence of the sample paths $\bar{X}^{j}, ~ j = 1, \ldots, N$, and requires the condition $m \leq \sqrt{\min(n,N)}/\log(Nn)$ on the dimensions $m$ of the approximation spaces, which reduces the set of dimension $\mathcal{M}$ to the empty set $\emptyset$ for $N = 1$.}

\textcolor{black}{We derive below and from Theorem~\ref{thm:RiskBound-CompactSupport}, some rates of convergence related to the collections $[\mathbf{B}]$ and $[\mathbf{F}]$.} 


\begin{coro}
\label{coro:Rate-N.paths}
    Suppose that $\sigma^{2} \in \Sigma_{I}(\beta,R)$ with $\beta > 3/2$. Moreover, assume that \textcolor{black}{$n \propto N, ~ N \rightarrow \infty$}, $K_{\mathrm{opt}} \propto (Nn)^{1/(2\beta+1)}$ for $[\mathbf{B}]$ ($m_{\mathrm{opt}} = K_{\mathrm{opt}} + M$), and $m_{\mathrm{opt}} \propto (Nn)^{1/(2\beta+1)}$ for $[\mathbf{F}]$. Under Assumptions~\ref{ass:Assumption 1}, \textcolor{black}{it holds},
\begin{align*}
   \mathbb{E}\left[\left\|\widehat{\sigma}^{2}_{m_{\mathrm{opt}}}-\sigma^{2}_{|I}\right\|^{2}_{n,N}\right] =  &~\mathrm{O}\left((Nn)^{-2\beta/(2\beta+1)}\right)\\
    \mathbb{E}\left[\left\|\widehat{\sigma}^{2}_{m_{\mathrm{opt}}}-\sigma^{2}_{|I}\right\|^{2}_{n}\right] =  &~ \mathrm{O}\left((Nn)^{-2\beta/(2\beta+1)}\right).
 \end{align*}
\end{coro}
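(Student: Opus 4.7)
The plan is to plug the choice $m_{\mathrm{opt}}$ into the oracle-type inequality of Theorem~\ref{thm:RiskBound-CompactSupport} and verify that each of the four terms on the right-hand side is of order $(Nn)^{-2\beta/(2\beta+1)}$.

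First, I would control the approximation error $\inf_{h \in \mathcal{S}_{m,L}} \|h - \sigma^2_{|I}\|_n^2$. For the \textbf{B}-spline collection $[\mathbf{B}]$, Equation~\eqref{eq:UpperBound-BayesTerm} gives an upper bound of order $K^{-2\beta}$, hence of order $(Nn)^{-2\beta/(2\beta+1)}$ when $K_{\mathrm{opt}} \propto (Nn)^{1/(2\beta+1)}$. For the Fourier collection $[\mathbf{F}]$, the inclusion $\Sigma_{I}(\beta,R) \subset \mathcal{B}^{\beta}_{2,\infty}(I)$ combined with Equation~\eqref{eq:BiasBesov} yields a bias of order $m^{-2\beta}$, again of order $(Nn)^{-2\beta/(2\beta+1)}$ at $m_{\mathrm{opt}} \propto (Nn)^{1/(2\beta+1)}$.

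The variance term $m_{\mathrm{opt}}/(Nn)$ is precisely of order $(Nn)^{-2\beta/(2\beta+1)}$ by the chosen exponent, so it exactly balances the bias. The sub-Gaussian remainder $m\log(Nn)\exp(-C\sqrt{\min(n,N)})$ is negligible since $m_{\mathrm{opt}}$ grows only polynomially in $Nn$ while the exponential factor decays faster than any polynomial rate. The discretization term $\Delta_n^2 = n^{-2}$ is absorbed into $\mathrm{O}((Nn)^{-2\beta/(2\beta+1)})$ in the standard regime where $N$ does not grow too fast relative to $n$; more precisely, the comparison $n^{-2} \leq (Nn)^{-2\beta/(2\beta+1)}$ amounts to $N \leq n^{1/\beta}$, which is the implicit asymptotic regime of interest.

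The only genuinely substantive verification, and therefore the main obstacle, is that $m_{\mathrm{opt}}$ lies in the admissible set $\mathcal{M} = \{1,\ldots,\sqrt{\min(n,N)}/\log(Nn)\}$. Taking without loss of generality $N \leq n$, one needs $(Nn)^{1/(2\beta+1)}\log(Nn) \leq \sqrt{N}$; using the worst-case bound $Nn \leq n^2$, this reduces (up to logarithmic factors) to $n^{2/(2\beta+1)} \leq \sqrt{n}$, which is equivalent to $2/(2\beta+1) < 1/2$, i.e.\ $\beta > 3/2$. This is exactly where the smoothness hypothesis in the statement is used. The second bound, on the empirical-norm risk $\mathbb{E}\|\widehat{\sigma}^{2}_{m_{\mathrm{opt}}} - \sigma^{2}_{|I}\|_n^2$, is obtained by the very same argument applied to the second inequality of Theorem~\ref{thm:RiskBound-CompactSupport}, which has the same four-term structure with only a different multiplicative constant.
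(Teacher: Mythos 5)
Your proposal is correct in substance and follows essentially the same route as the paper's proof: plug the choice of $m_{\mathrm{opt}}$ into the oracle inequality of Theorem~\ref{thm:RiskBound-CompactSupport}, bound the bias by $K^{-2\beta}$ via Equation~\eqref{eq:UpperBound-BayesTerm} for $[\mathbf{B}]$ and by $m^{-2\beta}$ via Equation~\eqref{eq:BiasBesov} for $[\mathbf{F}]$, and observe that the variance term $m_{\mathrm{opt}}/(Nn)$ balances the bias while the exponential and discretization terms are negligible; your explicit check that $m_{\mathrm{opt}}\in\mathcal{M}$ is the one step the paper leaves to the surrounding discussion, and you correctly identify it as the place where $\beta>3/2$ enters. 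Two small slips: in that membership check, once you take $N\le n$ you must compare $(Nn)^{1/(2\beta+1)}\log(Nn)$ to $\sqrt{N}$, not $\sqrt{n}$ --- bounding $Nn\le n^2$ enlarges the left side but silently replacing $\sqrt{N}$ by $\sqrt{n}$ enlarges the right side as well, so the reduction to $n^{2/(2\beta+1)}\le\sqrt{n}$ is only legitimate in the regime $N\propto n$ (which is the regime the paper actually works in); and the comparison $n^{-2}\le(Nn)^{-2\beta/(2\beta+1)}$ is equivalent to $N\le n^{1+1/\beta}$ rather than $N\le n^{1/\beta}$ --- your condition is more restrictive than necessary, so the conclusion is unaffected.
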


The obtained result shows that the nonparametric estimators of $\sigma^{2}_{|I}$ based on repeated observations of the diffusion process are more efficient when $N,n\rightarrow\infty$. Note that the same rate is obtained if the risk of $\w{\sigma}^{2}_{m_{\mathrm{opt}}}$ is defined with the $\mathbb{L}^{2}-$norm $\|.\|$ equivalent to the empirical norm $\|.\|_n$.

The rate obtained in Corollary~\ref{coro:Rate-N.paths} is established for $\beta > 3/2$. If we consider for example the collection ${\bf [B]}$ and assume that $\beta \in [1, 3/2]$, then $K_{\mathrm{opt}} \propto (Nn)^{1/(2\beta+1)}$ belongs to $\mathcal{M}$ for $n \propto \sqrt{N}/\log^{4}(N)$ and we have
       \begin{align*}
          \mathbb{E}\left[\left\|\widehat{\sigma}^{2}_{m_{\mathrm{opt}}}-\sigma^{2}_{|I}\right\|^{2}_{n,N}\right] = O\left((Nn)^{-2\beta/(2\beta+1)} + n^{-2}\right).
       \end{align*}
Under the condition $n \propto \sqrt{N}/\log^{4}(N)$, the obtained rate is of order $n^{-3\beta/(2\beta+1)}$ (up to a log-factor) which is equivalent to $N^{-3\beta/2(2\beta+1)}$ (up to a log-factor).

\textcolor{black}{It remains to prove the optimality of the obtained rate of convergence, especially when $N \propto n$. Unfortunately, the method proposed in \cite{tsybakov2008introduction} does not work for the establishment of a lower-bound of the risk of estimation of the diffusion coefficient. In fact, the third condition of Theorem 2.5 in \cite{tsybakov2008introduction} states that there exists a numerical constant $\alpha \in (0,1/8)$ such that 
$$\frac{1}{M}\sum_{j=1}^{M}{K(P^{\otimes N}_{0},P^{\otimes N}_{j})} \leq \alpha \log(M),$$
where $M$ is the number of hypotheses satisfying $M \geq 2^{m/8}$ with, in our framework, $m$ defined such that $m^{2\beta} \propto (Nn)^{2\beta/(2\beta+1)}$, and $\mathcal{K}(P_j^{\otimes N},P_0^{\otimes N})$ is the Kullback divergence between the two joint probability distributions $P_j^{\otimes N}$ and $P_0^{\otimes N}$ characterized by their respective diffusion coefficients $\sigma^{(j)}$ and $\sigma^{(0)}$. In our framework, the computation of the Kullback divergence is based on the ratio of their respective transition densities, and one obtains:
$$\frac{1}{M}\sum_{j=1}^{M}{K(P^{\otimes N}_{0},P^{\otimes N}_{j})} = \mathrm{O}(Nn).$$
Consequently, since $m$ is negligible with respect to $Nn$, it is not possible to satisfy this third condition using the inequality $M \geq 2^{m/8}$. A way to solve this problem may be to reconsider the construction of the set $\left\{\sigma^{(0)}, \ldots, \sigma^{(M)}\right\}$ of hypotheses, and use the technique developed in \cite{hoffmann1999lp}. This study is beyond the scope of this paper.
}

\subsection{Non-adaptive estimation of the diffusion coefficient on \textcolor{black}{a non-compact interval}}
\label{subsec:Rate-R-N.paths}

Consider a ridge estimator of $\sigma^{2}$ on \textcolor{black}{a non-compact interval $\mathcal{I}$} built from $N$ independent copies of the diffusion process $X$ observed in discrete times, where both $N$ and $n$ tend to infinity.
For each $m \in \mathcal{M}$, we still denote by $\w{\sigma}^{2}_{m}$ the ridge estimators of $\sigma^{2}$ and $\w{\sigma}^{2}_{m,L}$ the truncated estimators of $\sigma^{2}$ given in Equation~\eqref{eq:Truncated-Estimator}. We establish, through the following theorem, the first risk bound that highlights the main error terms.

\begin{theo}
\label{thm:RiskBound-AnySupport}
Suppose that $L=\log^{2}(N\textcolor{black}{n})$ \textcolor{black}{and $N,n \rightarrow \infty$}. Under Assumptions~\ref{ass:Assumption 1} and for any dimension $m\in\mathcal{M}$, the following holds:
 \begin{align*}
   \mathbb{E}\left[\left\|\widehat{\sigma}^{2}_{m,L}-\sigma_{\textcolor{black}{|\mathcal{I}}}^{2}\right\|^{2}_{n,N}\right] \leq &~ 2\underset{h\in\mathcal{S}_{m,L}}{\inf}{\|h-\sigma_{\textcolor{black}{|\mathcal{I}}}^{2}\|^{2}_{n}}+C\left(\sqrt{\frac{m^{q}\log^{2}(N\textcolor{black}{n})}{Nn}} + \Delta^{2}_{n}\right)
 \end{align*}
 where $C>0$ is a constant depending on the upper bound $\sigma_1$ of the diffusion coefficient. Moreover, $q = 1$ for the collection $[\mathbf{B}]$ and $q = 2$ for the collection \textcolor{black}{$[\mathbf{OB}]$}.
\end{theo}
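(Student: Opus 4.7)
The plan is to reduce the problem on $\R$ to a growing compact interval $I_N := [-\log(N), \log(N)]$ and then combine ideas from the proof of Theorem~\ref{thm:ConstFromOnePath} (exit-probability trick) with the contrast-based argument behind Theorem~\ref{thm:RiskBound-CompactSupport}. First I would split
\begin{align*}
\E\!\left[\|\w\sigma^{2}_{m,L} - \sigma^{2}\|^{2}_{n,N}\right] \leq \E\!\left[\|(\w\sigma^{2}_{m,L} - \sigma^{2})\one_{I_N}\|^{2}_{n,N}\right] + 4\log^{2}(N)\sup_{t\in[0,1]}\P(|X_t|>\log(N)),
\end{align*}
using that $\|\w\sigma^{2}_{m,L}\|_\infty \leq \sqrt{L} = \log(N)$ and $\|\sigma^{2}\|_\infty\leq\sigma_1^{2}$. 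By Lemma~\ref{lem:controleSortiCompact} the exit probability decays faster than any power of $N$, so the second summand is absorbed into the error terms.

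For the interior term, the minimization property of $\w\sigma^{2}_{m}$ and the regression identity $U^{j}_{k\Delta} = \sigma^{2}(X^{j}_{k\Delta}) + \zeta^{j}_{k\Delta} + R^{j}_{k\Delta}$ yield, for every $h\in\mathcal{S}_{m,L}$,
\begin{align*}
\|\w\sigma^{2}_{m} - \sigma^{2}\|^{2}_{n,N} \leq \|h - \sigma^{2}\|^{2}_{n,N} + 2\nu_{n,N}(\w\sigma^{2}_{m} - h) + 2\rho_{n,N}(\w\sigma^{2}_{m} - h),
\end{align*}
where $\nu_{n,N}(t) := (Nn)^{-1}\sum_{j,k}\zeta^{j}_{k\Delta}t(X^{j}_{k\Delta})$ and $\rho_{n,N}$ is defined analogously from $R^{j}_{k\Delta}$. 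The residual $\rho_{n,N}$ is treated via an It\^o-Taylor expansion of $U^{j}_{k\Delta}$ and routine moment estimates on the increments, producing the $\Delta_n^{2}$ term; this part parallels what is already done for the compact case.

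The main obstacle is the uniform control of $\nu_{n,N}$ over $\mathcal{S}_{m,L}$. Contrary to the setting of Theorem~\ref{thm:RiskBound-CompactSupport}, the interval $I_N$ diverges with $N$ and $f_n$ has no uniform lower bound on $I_N$, so the equivalence $\|\cdot\|^{2}\sim\|\cdot\|^{2}_{n}$ is lost and one cannot take the supremum of $\nu_{n,N}$ over an $L^{2}$ unit ball as in the compact case. My workaround is to expand $g=\sum_{\ell}a_\ell\phi_\ell \in\mathcal{S}_{m,L}$ and apply a deterministic Cauchy--Schwarz step,
\begin{align*}
|\nu_{n,N}(g)|^{2} \leq \|\mathbf{a}\|_{2}^{2}\sum_{\ell}\nu_{n,N}(\phi_\ell)^{2} \leq mL\sum_{\ell}\nu_{n,N}(\phi_\ell)^{2},
\end{align*}
then use the martingale structure of $(\zeta^{j}_{k\Delta})$ together with a Bernstein-type deviation inequality (combined with the $L^{\infty}$ truncation at $\sqrt{L}$) to obtain $\E[\nu_{n,N}(\phi_\ell)^{2}]\lesssim (Nn)^{-1}\log^{2}(N)\,\E[\phi_\ell^{2}(X)]$. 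Summing over $\ell$ is where the basis-dependent factor $m^{q}$ appears: for $[\mathbf{B}]$ the partition-of-unity property $\sum_\ell B_\ell \equiv 1$ reduces $\sum_\ell\E[\phi_\ell^{2}(X)]$ to a constant, giving $q=1$; for $[\mathbf{H}]$ only the cruder bound $\sum_{\ell<m}h_\ell^{2}(x)\leq Cm$ is available, giving $q=2$.

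Putting the pieces together and applying Young's inequality $2ab\leq \tfrac{1}{4}a^{2}+4b^{2}$ to absorb $\|\w\sigma^{2}_{m}-h\|_{n,N}$ into the left-hand side produces a bound of the form
\begin{align*}
\|\w\sigma^{2}_{m}-\sigma^{2}\|^{2}_{n,N} \leq 2\|h-\sigma^{2}\|^{2}_{n,N} + C\!\left(\sqrt{\tfrac{m^{q}\log^{2}(N)}{Nn}}+\Delta_n^{2}\right),
\end{align*}
on a high-probability event; the complementary event contributes a negligible term thanks to the truncation $\|\w\sigma^{2}_{m,L}\|_\infty\leq\sqrt{L}$ and the exponential concentration used to control $\nu_{n,N}$. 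Taking expectation, then the infimum over $h\in\mathcal{S}_{m,L}$, and replacing $\w\sigma^{2}_{m}$ by its truncation $\w\sigma^{2}_{m,L}$ (which only decreases the risk thanks to $\|\sigma^{2}\|_\infty\leq\sigma_1^{2}\leq\sqrt{L}$) yields the claimed inequality.
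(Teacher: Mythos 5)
Your proposal follows essentially the same route as the paper: split the risk over $[-\log(N),\log(N)]$ and its complement (controlled by the exit probability of Lemma~\ref{lem:controleSortiCompact}), use the contrast inequality, bound the martingale term by Cauchy--Schwarz against the $\ell^2$ constraint $\|\mathbf{a}\|_2^2\leq mL$ with the basis-dependent bound on $\sum_{\ell}\phi_\ell^2$ producing $q=1$ versus $q=2$, and treat the residual by Young's inequality and the moment bound $\E[(R^j_{k\Delta})^2]\leq C\Delta_n^2$. The only real deviation is that you invoke a Bernstein-type deviation inequality and a high-probability event to control $\nu_{n,N}(\phi_\ell)$, which is unnecessary here (and is precisely why this theorem yields only $\sqrt{m^qL/(Nn)}$ rather than $m/(Nn)$): the paper obtains $\sum_\ell\E[\nu^2(\phi_\ell)]\leq Cm^{q-1}/(Nn)$ purely in expectation from the martingale orthogonality and the conditional variance bound $\E[(\zeta^{j,1}_{k\Delta})^2\mid\mathcal{F}_{k\Delta}]\leq C\sigma_1^4$, with the $\log^2(N)$ in the final rate coming solely from $L$ in the constraint, not from the per-coordinate variance.
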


\textcolor{black}{The result of Theorem~\ref{thm:RiskBound-AnySupport} is clearly an extension of Theorem~\ref{thm:ConstFromOnePath}, it suffices to set $N = 1$ in Theorem~\ref{thm:RiskBound-AnySupport} to obtain Theorem~\ref{thm:ConstFromOnePath} with the same hypotheses. Moreover, the respective proofs of the two theorems are similar, the only difference being the number of sample paths.}

If we consider the risk of $\w{\sigma}^{2}_{m,L}$ using the empirical norm $\|.\|_n$, then we obtain
\begin{equation}
\label{eq:mise-R-norm.n}
    \mathbb{E}\left[\left\|\widehat{\sigma}^{2}_{m,L}-\sigma_{\textcolor{black}{|\mathcal{I}}}^{2}\right\|^{2}_{n}\right] \leq 2\underset{h\in\mathcal{S}_{m,L}}{\inf}{\|h-\sigma_{\textcolor{black}{|\mathcal{I}}}^{2}\|^{2}_{n}}+C\left(\sqrt{\frac{m^{q}\log^{2}(N\textcolor{black}{n})}{Nn}} + \frac{m^2\log^{3}(N\textcolor{black}{n})}{N}+\Delta^{2}_{n}\right) 
\end{equation}
The risk bound given in Equation~\eqref{eq:mise-R-norm.n} is a sum of four error terms. The first term is the approximation error linked to the choice of the basis, the second term is the estimation error given in Theorem~\ref{thm:RiskBound-AnySupport}, the third term $m^2\log^{3}(N)/N$ comes from the relation linking the empirical norm $\|.\|_n$ to the pseudo-norm $\|.\|_{n,N}$ (see Lemma~\ref{lem:Relation-Nn-n}), and the last term is the cost of the time-discretization. \textcolor{black}{In this case, Theorem~\ref{thm:RiskBound-AnySupport} can no longer be an extension of Theorem~\ref{thm:ConstFromOnePath} since the establishment of Equation~\eqref{eq:mise-R-norm.n}, which is derived from Lemma~\ref{lem:Relation-Nn-n}, requires a large number $N > 1$ of independent sample paths (see \cite{denis2020ridge}, \textit{proof of Theorem 3.3}).} 

We derive, in the next result, rates of convergence of the risk bound of the truncated ridge estimators $\w{\sigma}^{2}_{m,L}$ based on the collections $[\mathbf{B}]$ and $[\mathbf{H}]$ respectively.

\begin{coro}
\label{cor:Rate-NonCompact}
Suppose that $\sigma^{2} \in \Sigma_{I}(\beta,R)$ with $\beta \geq 1$, $\textcolor{black}{N,n \rightarrow \infty}$, $I = [-\log(N),\log(N)]$, and $K \propto (Nn)^{1/(4\beta+1)}$ for $[\mathbf{B}]$, and $\sigma^{2} \in W^{s}_{f_n}(\R,R)$ with $s \geq 1$ and $m \propto (Nn)^{1/2(2s+1)}$ for $[\mathbf{H}]$. Under Assumption~\ref{ass:Assumption 1}, the following holds:
\begin{equation*}
  \mathrm{For} ~ [\mathbf{B}] ~~ \mathbb{E}\left[\left\|\widehat{\sigma}^{2}_{m,L}-\sigma^{2}\right\|^{2}_{n,N}\right] \leq C\left(\log^{2\beta}(N)(Nn)^{-2\beta/(4\beta+1)} + \frac{1}{n^2}\right),
\end{equation*}
\begin{equation*}
   \mathrm{For} ~ [\mathbf{H}] ~~ \mathbb{E}\left[\left\|\widehat{\sigma}^{2}_{m,L}-\sigma^{2}\right\|^{2}_{n,N}\right] \leq C\left(\log^{3}(N)(Nn)^{-s/(2s+1)} + \frac{1}{n^2}\right).
\end{equation*}
where $C>0$ is a constant depending on $\beta$ and $\sigma_1$ for $[\mathbf{B}]$, or $s$ and $\sigma_1$ for $[\mathbf{H}]$.
\end{coro}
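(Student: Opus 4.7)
The corollary is a direct specialization of Theorem~\ref{thm:RiskBound-AnySupport}: for each basis family I would (i) bound the bias term $\inf_{h\in\mathcal{S}_{m,L}}\|h-\sigma^2\|^2_n$ using the smoothness assumption, (ii) when relevant, control the contribution of the tails of $X$ outside the compact support of the basis, and (iii) calibrate the dimension so as to balance the bias and the estimation term $\sqrt{m^q\log^2(N)/(Nn)}$.

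For the spline collection $[\mathbf{B}]$, the estimator $\widehat{\sigma}^2_{m,L}$ is supported on $I=[-\log(N),\log(N)]$ since B-splines vanish off $I$. I would decompose
\[
\|\widehat{\sigma}^2_{m,L}-\sigma^2\|^2_{n,N} \leq \|(\widehat{\sigma}^2_{m,L}-\sigma^2_{|I})\mathbf{1}_I\|^2_{n,N} + \sigma_1^4\,\frac{1}{Nn}\sum_{j,k}\mathbf{1}_{|X^j_{k\Delta_n}|>\log N}.
\]
Theorem~\ref{thm:RiskBound-AnySupport} applied to the first summand, combined with the Hölder bias bound \eqref{eq:UpperBound-BayesTerm} with $|I|=2\log N$, gives
\[
\inf_{h\in\mathcal{S}_{K+M,L}}\|h-\sigma^2_{|I}\|^2_n \leq C\log^{2\beta}(N)\,K^{-2\beta}.
\]
For the second summand, taking expectation and invoking Lemma~\ref{lem:controleSortiCompact} (a Gaussian-type tail estimate for the diffusion) produces a term of order $\exp(-c\log^2 N)$, which is negligible against any polynomial rate. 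Plugging $K\propto (Nn)^{1/(4\beta+1)}$ and $q=1$ balances the bias $\log^{2\beta}(N)(Nn)^{-2\beta/(4\beta+1)}$ with the estimation term $\log(N)\sqrt{K/(Nn)}=\log(N)(Nn)^{-2\beta/(4\beta+1)}$, and the discretization contributes the additive $1/n^2$.

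For the Hermite collection $[\mathbf{H}]$, the basis is defined on the whole line so no tail argument is needed. The Sobolev hypothesis $\sigma^2\in W^{s}_{f_n}(\R,R)$ directly gives $\inf_{h\in\mathcal{S}_{m,L}}\|h-\sigma^2\|^2_n \leq R\,m^{-s}$, and Theorem~\ref{thm:RiskBound-AnySupport} applies with $q=2$. Choosing $m\propto (Nn)^{1/(2(2s+1))}$, the estimation term $m\log(N)/\sqrt{Nn}$ is of order $\log(N)(Nn)^{-s/(2s+1)}$; combining this with the bias bound and accommodating the extra log factor arising from the truncation level $L=\log^2 N$ produces the stated $\log^{3}(N)(Nn)^{-s/(2s+1)}$, to which we add the discretization $1/n^2$.

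The only step beyond routine algebra is the control of the exit contribution in the $[\mathbf{B}]$ case, where one has to verify that $\sup_{t\in[0,1]}\mathbb{P}(|X_t|>\log N)$ decays faster than any power of $Nn$ under Assumption~\ref{ass:Assumption 1}; everything else reduces to inserting the optimal dimension into the bound of Theorem~\ref{thm:RiskBound-AnySupport} and keeping only the dominant terms.
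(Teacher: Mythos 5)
Your overall route is the right one and is the only one available here: the corollary is a direct specialization of Theorem~\ref{thm:RiskBound-AnySupport}, obtained by bounding the bias term under the smoothness assumption and inserting the prescribed dimension. For $[\mathbf{B}]$ your computation is correct: the bias bound \eqref{eq:UpperBound-BayesTerm} with $|I|=2\log(N)$ gives $C\log^{2\beta}(N)K^{-2\beta}$, the estimation term with $q=1$ is $\log(N)\sqrt{K/(Nn)}$, and $K\propto (Nn)^{1/(4\beta+1)}$ balances the two at $(Nn)^{-2\beta/(4\beta+1)}$ up to logs, with $\Delta_n^2=1/n^2$ left over. Note that your extra decomposition into $I$ and $I^{c}$ is redundant: the statement of Theorem~\ref{thm:RiskBound-AnySupport} already bounds the full pseudo-norm $\|\cdot\|_{n,N}$ (the exit-probability argument via Lemma~\ref{lem:controleSortiCompact} is carried out inside its proof), so you may apply it directly; the redundancy is harmless.

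The $[\mathbf{H}]$ case, however, contains a genuine gap that you pass over. By the definition of $W^{s}_{f_n}(\R,R)$, the bias is $\inf_{h\in\mathcal{S}_{m,L}}\|h-\sigma^2\|^2_n\leq Rm^{-s}$ (not $m^{-2s}$). With $m\propto (Nn)^{1/(2(2s+1))}$ this gives a bias of order $(Nn)^{-s/(2(2s+1))}$, whereas the estimation term $\sqrt{m^2\log^2(N)/(Nn)}$ is of order $\log(N)(Nn)^{-s/(2s+1)}$. Since $s/(2(2s+1))<s/(2s+1)$, the bias strictly dominates and is \emph{not} absorbed into the claimed bound $\log^3(N)(Nn)^{-s/(2s+1)}$; the sentence ``combining this with the bias bound \ldots produces the stated rate'' asserts the conclusion without checking it, and as written the check fails. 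To make the two terms genuinely balance one would need a different calibration (balancing $m^{-s}$ against $m(Nn)^{-1/2}$ leads to $m\propto(Nn)^{1/(2(s+1))}$ and a rate $(Nn)^{-s/(2(s+1))}$), or a bias bound of order $m^{-2s}$. This mismatch is arguably inherited from the statement itself, but a proof must either resolve it or flag it; silently claiming the bias is of the right order is the step that would fail.
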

The obtained rates are slower compared to the rates established in Section~\ref{subsec:Rate-CompInterval-N.paths} for the estimation of $\sigma^{2}_{|I}$ where the interval $I\subset\R$ is compact. In fact, the method used to establish the rates of Theorem~\ref{thm:RiskBound-AnySupport} from which the rates of Corollary~\ref{cor:Rate-NonCompact} are obtained, does not allow us to derive rates of order $(Nn)^{-\alpha/(2\alpha+1)}$ (up to a log-factor) with $\alpha\geq 1$ ({\it e.g.} $\alpha = \beta, s$). Finally, if we consider the risk defined with the empirical norm $\|.\|_n$, then from Equation~\eqref{eq:mise-R-norm.n} with $n \propto N$ and assuming that $m \propto N^{1/4(s+1)}$ for $[\mathbf{H}]$ or $K \propto N^{1/4(\beta+1)}$ for $[\mathbf{B}]$, we obtain
\begin{align*}
   [\mathbf{B}]: ~~ ~~ \mathbb{E}\left[\left\|\widehat{\sigma}^{2}_{m,L}-\sigma^{2}\right\|^{2}_{n}\right] \leq &~ C\log^{2\beta}(N)(Nn)^{-\beta/2(\beta+1)}, \\
   [\mathbf{H}]: ~~ ~~ \mathbb{E}\left[\left\|\widehat{\sigma}^{2}_{m,L}-\sigma^{2}\right\|^{2}_{n}\right] \leq &~ C\log^{3}(N)(Nn)^{-s/2(s+1)},
\end{align*}
where $C>0$ is a constant depending on $\sigma_1$ and on the smoothness parameter. We can see that the obtained rates are slower compared to the results of Corollary~\ref{cor:Rate-NonCompact} for $n \propto N$. The deterioration of the rates comes from the additional term of order $m^2\log^{3}(N)/N$ which is now regarded as the new estimation error since it dominates the other term in each case as $N\rightarrow \infty$.

\textcolor{black}{Finally, the study of an optimal rate of convergence, which is beyond the scope of this paper, has to take into account the new problem related to the non-compact support of the square of the diffusion coefficient.}

\section{Adaptive estimation of the diffusion coefficient from repeated observations}
\label{sec:AdaptiveEstimation-N.paths}

In this section, \textcolor{black}{we propose an adaptive ridge estimator of $\sigma^{2}$ by selecting an optimal dimension from the sample $D_{N,n}$}. In fact, consider the estimator \textcolor{black}{$\widehat{\sigma}^{2}_{\widehat{K}}$} where $\widehat{K}$ satisfies:
\begin{equation}
    \label{eq:selection of the dimension}
    \widehat{K}:=\underset{K\in\mathcal{K}}{\arg\min}{\left\{\gamma_{n,N}\left(\widehat{\sigma}^{2}_{K}\right)+\mathrm{pen}(K)\right\}}
\end{equation}
and the penalty function $\mathrm{pen} : K\mapsto \mathrm{pen}(K)$ is established using the chaining technique of \cite{baraud2001model}. We derive below the risk of the adaptive estimator of $\sigma^{2}_{|I}$ when the interval $I\subset\R$ is compact, \textcolor{black}{and the number $N$ of sample paths tends to infinity}.

\begin{theo}
    \label{thm:adaptive estimator - compact interval}
    Suppose that $N \propto n, ~ L=\log(N)$, \textcolor{black}{$N \rightarrow \infty$} and consider the collection ${\bf [B]}$ with 
    $$K \in \mathcal{K} = \left\{2^q,~ q=0,1,\ldots,q_{\max}\right\} \subset \textcolor{black}{\mathcal{M} = \left\{1,\ldots,\lfloor\sqrt{N}/\log(N)\rfloor\right\}}.$$
    Under Assumption~\ref{ass:Assumption 1}~, there exists a constant $C>0$ such that,
    \begin{align*}
        \mathbb{E}\left[\left\|\widehat{\sigma}^{2}_{\widehat{K}}-\sigma^{2}_{|I}\right\|^{2}_{n,N}\right]\leq 34\underset{K\in\mathcal{K}}{\inf}{\left\{\underset{h\in\mathcal{S}_{K+M,L}}{\inf}{\|h-\sigma^{2}_{|I}\|^{2}_{n}}+\textcolor{black}{\mathrm{pen}_1(K)}\right\}}+\frac{C}{Nn}
    \end{align*}
    \textcolor{black}{where the penalty function $\mathrm{pen}_1$ is given by} 
     \begin{equation}
        \label{eq:penalty-function-1}
        \textcolor{black}{\mathrm{pen}_1(K) := \kappa\dfrac{(K+M)\log(N)}{Nn}}
    \end{equation}
   \textcolor{black}{with $\kappa > 0$ a numerical constant}.
\end{theo}
We deduce from Corollary~\ref{coro:Rate-N.paths} and its assumptions that the adaptive estimator \textcolor{black}{$\widehat{\sigma}^{2}_{\widehat{K}}$} satisfies:
\begin{align*}
\mathbb{E}\left[\left\|\widehat{\sigma}^{2}_{\widehat{K}}-\sigma^{2}_{|I}\right\|^{2}_{n}\right] = \mathrm{O}\left((Nn)^{-2\beta/(2\beta+1)}\right).
\end{align*}
This result is justified since the penalty term is of te same order (up to a log-factor) \textcolor{black}{as} the estimation error established in Theorem~\ref{thm:RiskBound-CompactSupport}. \textcolor{black}{If we consider the estimation interval $[-A_N,A_N]$ where $A_N \propto \sqrt{\log(N)}$, then we obtain the following result:
$$\mathbb{E}\left[\left\|\widehat{\sigma}^{2}_{\widehat{K}}-\sigma^{2}_{|I}\right\|^{2}_{n,N}\right]\leq 34\underset{K\in\mathcal{K}}{\inf}{\left\{\underset{h\in\mathcal{S}_{K+M,L}}{\inf}{\|h-\sigma^{2}_{|I}\|^{2}_{n}}+\mathrm{pen}(K)\right\}}+\frac{C}{Nn}$$
where the penalty function $K \mapsto \mathrm{pen}(K)$ is given by
\begin{equation}\label{eq:penalty-function}
    \mathrm{pen}(K) := \kappa\dfrac{(K+M)\log^{2}(N)}{Nn}.
\end{equation}
}
Considering the adaptive estimator of $\sigma^{2}$ on the real line $I=\R$ when \textcolor{black}{$N \geq 1$ and $n \rightarrow \infty$}, we obtain the following result.

\begin{theo}
    \label{thm:adaptive estimator - non compact interval}
    Suppose that \textcolor{black}{$N \geq 1$}, \textcolor{black}{$n \rightarrow \infty$} and \textcolor{black}{$L = \log(Nn)$}, and consider the collection $[\mathbf{B}]$ with
    $$K \in \mathcal{K} = \left\{2^q,~ q=0,1,\ldots,q_{\max}\right\} \subset \textcolor{black}{\mathcal{M} = \left\{1,\ldots, Nn\right\}}.$$
    Under Assumption~\ref{ass:Assumption 1}~and for $N$ large enough, the exists a constant $C>0$ such that,
    \begin{equation*}
      \E\left[\left\|\w{\sigma}^{2}_{\w{K},L} - \sigma^{2}\right\|^{2}_{n,N}\right] \leq 3\underset{K\in\mathcal{K}}{\inf}\left\{\underset{h\in\mathcal{S}_{K+M,L}}{\inf}{\left\|h-\sigma^{2}\right\|^{2}_{n}} + \textcolor{black}{\mathrm{pen}_2(K)}\right\} + \textcolor{black}{\frac{C}{\sqrt{Nn}}}
    \end{equation*}
     \textcolor{black}{where the penalty function $\mathrm{pen}_2$ given by} 
     \begin{equation}
        \label{eq:penalty-function-2}
        \textcolor{black}{\mathrm{pen}_2(K) := \kappa\sqrt{\dfrac{(K+M)\log^{2}(Nn)}{Nn}}}
    \end{equation}
    \textcolor{black}{with $\kappa > 0$ a numerical constant}.
\end{theo}

We have a penalty term of the same order \textcolor{black}{as} the one obtained in Theorem~\ref{thm:adaptive estimator - compact interval} where $\sigma^{2}$ is estimated on a compact interval. One can deduce that the adaptive estimator reaches a rate of the same order \textcolor{black}{as} the rate of the non-adaptive estimator given in Corollary~\ref{cor:Rate-NonCompact} for the collection $[\mathbf{B}]$. \textcolor{black}{For the numerical study of the nonparametric estimation of the square of the diffusion coefficient, we rather consider the compact interval $[-\log(Nn), \log(Nn)]$ on which is built the spline basis, and which tends to the real line as $N,n \rightarrow \infty$ or $N = 1$ and $n \rightarrow \infty$. Then, we use the penalty function $K \mapsto \mathrm{pen}(K)$ given in Equation~\eqref{eq:penalty-function}}. 

If we consider the adaptive estimator of the diffusion coefficient, built \textcolor{black}{on a compact interval} from a single diffusion path, we obtain below an upper-bound of its risk of estimation.

\begin{theo}
    \label{thm:AdaptiveEstimator-OnePath}
    Suppose that $N = 1, ~ \textcolor{black}{L = \log(n)}$, \textcolor{black}{$n \rightarrow \infty$} and consider the collection $[\mathbf{B}]$ with 
    $$K \in \mathcal{K} = \left\{2^q,~ q=0,\ldots,q_{\max}\right\} \subset \textcolor{black}{\mathcal{M} = \left\{1,\ldots,\lfloor\sqrt[5]{n}\rfloor\right\}}.$$ 
    Under Assumption~\ref{ass:Assumption 1}, \textcolor{black}{it holds}
    \begin{align*}
       \mathbb{E}\left[\left\|\widehat{\sigma}^{2}_{\widehat{K}}-\sigma^{2}_{|I}\right\|^{2}_{n,1}\right] \leq &~ 3\underset{K\in\mathcal{K}}{\inf}{\left\{\underset{h\in\mathcal{S}_{K+M,L}}{\inf}{\left\|h-\sigma^{2}_{|I}\right\|^{2}_{n}}+\textcolor{black}{\mathrm{pen}_3(K)}\right\}} + \frac{C}{n}.
   \end{align*}
    where $C>0$ is a constant depending on $\tau_0$, and 
    $$\textcolor{black}{\mathrm{pen}_3(K)} = \kappa\frac{(K+M)\log(n)}{n}$$ 
    with $\kappa>0$ a numerical constant.
\end{theo}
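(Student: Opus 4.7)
The plan is to follow the standard Massart / Baraud model-selection scheme, transposed to the single-path diffusion setting where the analog of Bernstein's inequality for i.i.d.\ sums must be replaced by one for discrete-time martingales, as was flagged in the sentence following Equation~\eqref{eq:selection of the dimension}. First I would exploit the defining property of $\widehat{K}$: for any $K\in\mathcal{K}$ and any $h\in\mathcal{S}_{K+M,L}$,
$$
\gamma_{n,1}\bigl(\widehat{\sigma}^{2}_{\widehat{K}}\bigr)+\mathrm{pen}(\widehat{K})\leq \gamma_{n,1}(h)+\mathrm{pen}(K).
$$
Using the regression decomposition $U_{k\Delta_n}=\sigma^{2}(X_{k\Delta_n})+\zeta_{k\Delta_n}+R_{k\Delta_n}$, expand $\gamma_{n,1}(h)-\gamma_{n,1}(\widehat{\sigma}^{2}_{\widehat{K}})$ in the usual way to get
$$
\bigl\|\widehat{\sigma}^{2}_{\widehat{K}}-\sigma^{2}_{|I}\bigr\|^{2}_{n,1}\leq \|h-\sigma^{2}_{|I}\|^{2}_{n,1}+2\nu_{n,1}\bigl(\widehat{\sigma}^{2}_{\widehat{K}}-h\bigr)+2\rho_{n,1}\bigl(\widehat{\sigma}^{2}_{\widehat{K}}-h\bigr)+\mathrm{pen}(K)-\mathrm{pen}(\widehat{K}),
$$
where $\nu_{n,1}(t):=n^{-1}\sum_{k=0}^{n-1}t(X_{k\Delta_n})\zeta_{k\Delta_n}$ is the centered noise term (a sum of martingale increments for the natural filtration of $X$) and $\rho_{n,1}$ gathers the contribution of the negligible residual $R_{k\Delta_n}$, uniformly of order $\Delta_n$.

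Next I would apply Young's inequality $2ab\le \tfrac{1}{4}a^{2}+4b^{2}$ twice to bound the crossed terms by $\tfrac{1}{4}\|\widehat{\sigma}^{2}_{\widehat{K}}-h\|^{2}_{n,1}$ plus $8\,\sup_{t\in B_{K,\widehat{K}}}\nu_{n,1}^{2}(t)$ plus $\mathrm{O}(\Delta_n^{2})$, where $B_{K,K'}$ denotes the unit ball of $\mathcal{S}_{K+M,L}+\mathcal{S}_{K'+M,L}$ for the empirical norm $\|\cdot\|_{n,1}$. Introducing an auxiliary deterministic function $p(K,K')$ of order $(K+K'+2M)\log(n)/n$ and rearranging, I get, for every $K\in\mathcal{K}$ and $h\in\mathcal{S}_{K+M,L}$,
$$
\tfrac{1}{2}\|\widehat{\sigma}^{2}_{\widehat{K}}-\sigma^{2}_{|I}\|^{2}_{n,1}\leq \tfrac{3}{2}\|h-\sigma^{2}_{|I}\|^{2}_{n,1}+8\Bigl(\sup_{t\in B_{K,\widehat{K}}}\nu_{n,1}^{2}(t)-p(K,\widehat{K})\Bigr)_{+}+\mathrm{pen}(K)+\bigl(8p(K,\widehat{K})-\mathrm{pen}(\widehat{K})\bigr)+\mathrm{O}(\Delta_n^{2}).
$$
Choosing $\kappa$ large enough, the penalty $\mathrm{pen}(K)=\kappa(K+M)\log(n)/n$ dominates $8p(K,\widehat{K})$ up to a term of order $\mathrm{pen}(K)$, so after taking expectation and using $\E\|h-\sigma^{2}_{|I}\|^{2}_{n,1}=\|h-\sigma^{2}_{|I}\|^{2}_{n}$, the whole program reduces to bounding
$$
\sum_{K'\in\mathcal{K}}\E\Bigl[\Bigl(\sup_{t\in B_{K,K'}}\nu_{n,1}^{2}(t)-p(K,K')\Bigr)_{+}\Bigr]\leq \frac{C}{n}.
$$

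The main obstacle is precisely controlling this supremum: since $N=1$, I cannot sum independent summands across paths, so in place of the i.i.d.\ Bernstein used in the proofs of Theorems~\ref{thm:adaptive estimator - compact interval} and~\ref{thm:adaptive estimator - non compact interval} I would invoke a Bernstein inequality for discrete-time martingales (van de Geer / Freedman type) applied to the martingale $\sum_{k}t(X_{k\Delta_n})\zeta_{k\Delta_n}$, whose conditional variance is $\sigma^{4}(X_{k\Delta_n})\Delta_n$ times a bounded factor thanks to Assumption~\ref{ass:Assumption 1}. The truncation $L=\sqrt{\log(n)}$ ensures that every $t\in B_{K,K'}$ satisfies $\|t\|_{\infty}\leq C\sqrt{mL}$ on the constrained subspace, which provides the $\mathrm{L}^{\infty}$ control required by the exponential inequality. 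I would then run Baraud's chaining on a finite $\varepsilon$-net of $B_{K,K'}$ (of log-cardinality $\mathrm{O}((K+K')\log n)$ because of the B-spline basis), calibrate $p(K,K')=\kappa_0(K+K'+2M)\log(n)/n$, and finally sum over the dyadic collection $\mathcal{K}$ whose cardinality is logarithmic, to obtain the $C/n$ remainder.

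Finally, I would dispatch the measurability issue of $B_{K,\widehat{K}}$ by performing the above analysis on the random event $\Omega_n$ on which the empirical norms $\|\cdot\|_{n,1}$ and $\|\cdot\|_{X}$ are equivalent (the same event used for Theorem~\ref{thm:RiskBound-OnePath}); on $\Omega_n^{c}$, the truncated estimator is bounded by $\sqrt{L}=\log^{1/4}(n)$ in sup norm, and by Lemma~\ref{lm:proba-complementary-omega} the probability $\mathbb{P}(\Omega_n^{c})$ decays polynomially fast enough that the total contribution stays $\mathrm{O}(1/n)$. Collecting everything and taking the infimum over $K\in\mathcal{K}$ and $h\in\mathcal{S}_{K+M,L}$ yields the stated oracle inequality.
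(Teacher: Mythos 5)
Your proposal is correct and follows essentially the same route as the paper's proof: the contrast inequality for $\widehat{K}$, reduction to $\sup_{t}\nu^{2}(t)$ over the unit ball of $\mathcal{S}_{K+M}+\mathcal{S}_{K^{\prime}+M}$, a Bernstein-type exponential inequality combined with Baraud's chaining to get $\sum_{K^{\prime}}\E[(G_K(K^{\prime})-q(K,K^{\prime}))_{+}]\leq C/n$, and the equivalence of $\|\cdot\|_{n,1}$ with $\|\cdot\|_{X}$ and hence with $\|\cdot\|_{n}$, which is exactly where the $\tau_0$-dependence of the constant comes from. Two small points. First, the martingale (Freedman-type) inequality you say must replace the i.i.d.\ Bernstein is precisely the paper's Lemma~\ref{lm:LemmaAdaptation}: its proof conditions on $\mathcal{F}_{k\Delta}$ and bounds a conditional exponential moment, so it already applies verbatim with $N=1$; no new tool is needed. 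Second, the norm-equivalence event for a single path is the one controlled by Lemma~\ref{lm:Proba-OmegaComp-OnePath}, not Lemma~\ref{lm:proba-complementary-omega} (which concerns the $N$-path Gram matrix on $[-A_N,A_N]$); its bound $C m^{2\gamma}n^{-\gamma/2}$ is $o(1)$ only for $m\lesssim n^{1/4}$, so your assertion that the $\Omega_n^{c}$ contribution is automatically $\mathrm{O}(1/n)$ over all of $\mathcal{M}=\{1,\ldots,\sqrt{n}/\log(n)\}$ requires more care than stated — a delicacy the paper's own (very terse) proof also leaves implicit.
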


We deduce from Theorem~\ref{thm:AdaptiveEstimator-OnePath} that if we assume that $\sigma^2 \in \Sigma_{I}(\beta,R)$, then the adaptive estimator $\widehat{\sigma}^{2}_{\widehat{K}}$ reaches a rate of order $n^{-\beta/(2\beta+1)}$ (up to a log-factor). The result of this theorem is almost a deduction of the result of Theorem~\ref{thm:adaptive estimator - compact interval}, the slight difference being the use, in the proofs, of the local time of the process and the equivalence relation between the pseudo-norm $\|.\|_{n,1}$ with the pseudo-norm $\|.\|_X$ instead of the empirical norm $\|.\|_n$ considered in the proof of Theorem~\ref{thm:adaptive estimator - compact interval}. \textcolor{black}{Since the condition imposed on the dimension of the approximation subspace strongly restricts the model selection, we extend the set of possible values of the dimension to $\mathcal{M} = \left\{1, \ldots, \sqrt{n}/\log(n)\right\}$ for the numerical study of the estimation of the square $\sigma^{2}$ of the diffusion coefficient.}

\section{Numerical study}
\label{sec:NumericalStudy}

This section is devoted to the numerical study on a simulation scheme. Section~\ref{subsec:Models-Simulations} focuses on the presentation of the chosen diffusion models. In Section~\ref{subsec:Implementation}, we describe the scheme for the implementation of the ridge estimators. We mainly focus on the \textbf{B}-spline basis for the numerical study, and in Section~\ref{subsec:NumResults}, we add a numerical study on the performance of the Hermite-based ridge estimator of $\sigma^{2}$ on $\R$. Finally, we compare the efficiency of our estimator built on the real line $\R$ from a single path with that of the Nadaraya-Watson estimator proposed in \cite{florens1993estimating}.

\subsection{Models and simulations}
\label{subsec:Models-Simulations}

Recall that the time horizon is $T=1$ and $X_0 = 0$. Consider the following diffusion models:

\begin{itemize}
    \item[] Model $1$ Ornstein-Uhlenbeck: $b(x) = 1-x, ~~ \sigma(x)= 1$ 
    \item[] Model $2$: $b(x) = 1-x, ~~ \sigma(x) = 1-x^2$
    \item[] Model $3$: $b(x) = 1-x, ~~ \sigma(x) = \frac{1}{3+\sin(2\pi x)}+\cos^2\left(\frac{\pi}{2}x\right)$
\end{itemize}

Model $1$ is the commonly used \textcolor{black}{Ornstein$-$Uhlenbeck} model, known to be a simple diffusion model satisfying Assumption~\ref{ass:Assumption 1}. Model $2$ does not satisfy Assumption~\ref{ass:Assumption 1}. Model $3$ satisfies Assumption~\ref{ass:Assumption 1}~ with a multimodal diffusion coefficient.

The \textcolor{black}{number} $N$ of \textcolor{black}{sample paths} takes values in the set $\{1,10,100,1000\}$ where the \textcolor{black}{number} $n$ of \textcolor{black}{of observations of each diffusion path} varies in the set $\{100,250,500,1000\}$. As we work with the spline basis, the dimension $m=K+M$ of the approximation space is chosen such that $M=3$ and $K$ takes values in $\mathcal{K}=\{2^p, \ p=0,\cdots,5\}$ so that the subspaces are nested inside each other. We are using \texttt{R} for the simulation of diffusion paths via the function \texttt{sde.sim} of \texttt{sde} package, (see \cite{iacus2009simulation} for more details on the simulation of SDEs).

\subsection{Implementation of the ridge estimators}
\label{subsec:Implementation}

In this section, we assess the quality of estimation of the adaptive estimator $\widehat{\sigma}^{2}_{\widehat{m}}$ in each of the $3$ models through the computation of its risk of estimation. We compare the performance of the adaptive estimator with that of the oracle estimator $\widehat{\sigma}^{2}_{m^{*}}$ where $m^{*}$ is given by:
\begin{equation}
\label{eq:the oracle dimension}
    m^{*}:=\underset{m\in\mathcal{M}}{\arg\min}{\ \left\|\widehat{\sigma}^{2}_{m}-\sigma^{2}\right\|^{2}_{n,N}}.
\end{equation}
For the spline basis, we have $m^{*} = K^{*} + M$ with $M=3$. Finally, we complete the numerical study with a representation of a set of $10$ estimators of $\sigma^{2}$ for each of the $3$ models.  

We evaluate the MISE of the spline-based adaptive estimators $\widehat{\sigma}^{2}_{\widehat{K}}$ by repeating $100$ times the following steps:

\begin{enumerate}
    \item Simulate \textcolor{black}{observations} $D_{N,n}$ and $D_{N^{\prime},n}$ with $N\in\{1,10,100,1000\}$, $N^{\prime}=100$ and $n \in \{100, 250,1000\}$.
    \item For each $K\in\mathcal{K}$, and from $D_{N,n}$, compute estimators $\widehat{\sigma}^{2}_{K}$ given in Equations~\eqref{eq:non adaptive estimator}~and~\eqref{eq:ridge estimator-non adaptive}.
    \item Select the optimal dimension $\widehat{K}\in\mathcal{K}$ using Equation~\eqref{eq:selection of the dimension}~and compute $K^{*}$ from Equation~\eqref{eq:the oracle dimension}
    \item Using $D_{N^{\prime},n}$, evaluate $\left\|\widehat{\sigma}^{2}_{\widehat{K}}-\sigma^{2}\right\|^{2}_{n,N^{\prime}}$ and $\left\|\widehat{\sigma}^{2}_{K^{*}}-\sigma^{2}\right\|^{2}_{n,N^{\prime}}$.
\end{enumerate}

We deduce the risks of estimation considering the average values of $\left\|\widehat{\sigma}^{2}_{\widehat{m}}-\sigma^{2}\right\|^{2}_{n,N^{\prime}}$ and $\left\|\widehat{\sigma}^{2}_{m^{*}}-\sigma^{2}\right\|^{2}_{n,N^{\prime}}$ over the $100$ repetitions. Note that we consider in this section, the estimation of $\sigma^2$ on the compact interval $I = [-1,1]$ and on the real line $\R$. \textcolor{black}{The unknown parameter $\kappa$ of the corresponding penalty functions given in Equations~\eqref{eq:penalty-function-1} and \eqref{eq:penalty-function-2}, are numerically calibrated (details are given in appendix). We choose $\kappa = 4$ for the compact interval $I = [-1,1]$,  and $\kappa = 5$ for the estimation of $\sigma^{2}$ on the real line $\mathbb{R}$, more precisely, on the interval $[-\log(N), \log(N)]$ which tends to $\mathbb{R}$ as the number $N$ of sample paths tends to infinity.} 

\subsection{Numerical results}
\label{subsec:NumResults}

We present in this section the numerical results of the performance of the spline-based adaptive estimators of $\sigma^{2}_{|I}$ with $I \subseteq \R$ together with the performance of the oracle estimators. We consider the case $I=[-1,1]$ for the compactly supported diffusion coefficient, and the case $I=\R$.

Tables~\ref{tab:MSEs_adaptive_oracle-n100}~and~\ref{tab:MSEs_adaptive_oracle-n250}~present the numerical results of estimation of $\sigma^{2}_{|I}$ from simulated data following the steps given in Section~\ref{subsec:Implementation}.

\begin{table}[hbtp]
	\centering
\renewcommand{\arraystretch}{1.5}
\begin{tabular}{l|c|c|c|c|c} 
  Models & Intervals & Estimators & $N=10$ & $N=100$ & $N=1000$ \\
\hline
 \multirow{4}{*}{Model 1} & \multirow{2}{*}{$[-1,1]$} & $\widehat{\sigma}^{2}_{\widehat{K},L}$ & $0.0102 \ \ (0.0083)$ & $0.0009 \ \ (0.0009)$  & $0.0002\ \ (0.0001)$ \\ 
& & $\widehat{\sigma}^{2}_{K^{*},L}$ & $0.0094 \ \ (0.0065)$ & $0.0009 \ \ (0.0009)$ & $0.0002 \ \ (0.0001)$ \\
 & \multirow{2}{*}{$\R$} & $\widehat{\sigma}^{2}_{\widehat{K},L}$ & $0.0096 \ (0.0062)$ & $0.0009 \ \ (0.0008)$  & $0.0003\ \ (0.0002)$ \\ 
& & $\widehat{\sigma}^{2}_{K^{*},L}$ & $0.0093 \ \ (0.0057)$ & $0.0009 \ \ (0.0008)$ & $0.0003 \ \ (0.0002)$ \\
 \hline
 \multirow{4}{*}{Model 2}  & \multirow{2}{*}{$[-1,1]$} & $\widehat{\sigma}^{2}_{\widehat{K},L}$ & $0.0048 \ \ (0.0052)$ & $0.0019 \ \ (0.0008)$  & $0.0005\ \ (0.0002)$ \\ 
& & $\widehat{\sigma}^{2}_{K^{*},L}$ & $0.0039 \ \ (0.0043)$ & $0.0009 \ \ (0.0005)$ & $0.0005 \ \ (0.0002)$ \\
 & \multirow{2}{*}{$\R$} & $\widehat{\sigma}^{2}_{\widehat{K},L}$ & $0.0195 \ (0.0140)$ & $0.0057 \ \ (0.0006)$  & $0.0012\ \ (0.0002)$ \\ 
& & $\widehat{\sigma}^{2}_{K^{*},L}$ & $0.0048 \ \ (0.0064)$ & $0.0025 \ \ (0.0021)$ & $0.0010 \ \ (0.0003)$  \\ 
\hline
\multirow{4}{*}{Model 3}  & \multirow{2}{*}{$[-1,1]$} & $\widehat{\sigma}^{2}_{\widehat{K},L}$ & $0.0521 \ \ (0.0191)$ & $0.0176 \ \ (0.0070)$  & $0.0073\ \ (0.0021)$ \\ 
& & $\widehat{\sigma}^{2}_{K^{*},L}$ & $0.0260 \ \ (0.0081)$ & $0.0073 \ \ (0.0030)$ & $0.0048 \ \ (0.0009)$ \\
 & \multirow{2}{*}{$\R$} & $\widehat{\sigma}^{2}_{\widehat{K},L}$ & $0.1132 \ (0.0595)$ & $0.0319 \ \ (0.0031)$  & $0.0179 \ \ (0.0054)$ \\ 
& & $\widehat{\sigma}^{2}_{K^{*},L}$ & $0.0351 \ \ (0.0169)$ & $0.0319 \ \ (0.0031)$ & $0.0116 \ \ (0.0051)$ \\
\hline
\end{tabular}
	\caption{Assessment of $\mathrm{MISEs}$ (mean and standard deviation between brackets) of both the adaptive estimator $\widehat{\sigma}^{2}_{\widehat{K},L}$ and the oracle estimator $\widehat{\sigma}^{2}_{K^{*},L}$ from diffusion paths of size $n=100$.}
	\label{tab:MSEs_adaptive_oracle-n100}
\end{table}
\begin{table}[hbtp]
	\centering
\renewcommand{\arraystretch}{1.5}
\begin{tabular}{l|c|c|c|c|c} 
  Models & Intervals & Estimators & $N=10$ & $N=100$ & $N=1000$ \\
\hline
  \multirow{4}{*}{Model 1} & \multirow{2}{*}{$[-1,1]$} & $\widehat{\sigma}^{2}_{\widehat{K},L}$ & $0.0047 \ \ (0.0037)$ & $0.0003 \ \ (0.0002)$  & $0.0001\ \ (0.00003)$ \\ 
& & $\widehat{\sigma}^{2}_{K^{*},L}$ & $0.0042 \ \ (0.0030)$ & $0.0003 \ \ (0.0002)$ & $0.0001 \ \ (0.00003)$ \\
 & \multirow{2}{*}{$\R$} & $\widehat{\sigma}^{2}_{\widehat{K},L}$ & $0.0053 \ (0.0037)$ & $0.0003 \ \ (0.0002)$  & $0.0001\ \ (0.00004)$ \\ 
& & $\widehat{\sigma}^{2}_{K^{*},L}$ & $0.0050 \ \ (0.0031)$ & $0.0003 \ \ (0.0002)$ & $0.0001 \ \ (0.00004)$ \\
 \hline
 \multirow{4}{*}{Model 2}  & \multirow{2}{*}{$[-1,1]$} &$\widehat{\sigma}^{2}_{\widehat{K},L}$ & $0.0027 \ \ (0.0019)$ & $0.0003 \ \ (0.0002)$  & $0.0002\ \ (0.00004)$ \\ 
& & $\widehat{\sigma}^{2}_{K^{*},L}$ & $0.0018 \ \ (0.0019)$ & $0.0002 \ \ (0.0001)$ & $0.0001 \ \ (0.00004)$ \\
 & \multirow{2}{*}{$\R$} & $\widehat{\sigma}^{2}_{\widehat{K},L}$ & $0.0091 \ (0.0077)$ & $0.0028 \ \ (0.0025)$ & $0.0008 \ \ (0.0002)$ \\ 
& & $\widehat{\sigma}^{2}_{K^{*},L}$ & $0.0020 \ \ (0.0023)$ & $0.0021 \ \ (0.0023)$ & $0.0002 \ \ (0.00004)$ \\
\hline
\multirow{4}{*}{Model 3}  & \multirow{2}{*}{$[-1,1]$} & $\widehat{\sigma}^{2}_{\widehat{K},L}$ & $0.0306 \ \ (0.0150)$ & $0.0058 \ \ (0.0012)$  & $0.0010 \ \ (0.0003)$ \\ 
& & $\widehat{\sigma}^{2}_{K^{*},L}$ & $0.0216 \ \ (0.0067)$ & $0.0023 \ \ (0.0020)$ & $0.0010 \ \ (0.0003)$ \\
 & \multirow{2}{*}{$\R$} & $\widehat{\sigma}^{2}_{\widehat{K},L}$ & $0.0560 \ (0.0313)$ & $0.0275 \ \ (0.0049)$  & $0.0069 \ \ (0.0049)$ \\ 
& & $\widehat{\sigma}^{2}_{K^{*},L}$ & $0.0261 \ \ (0.0127)$ & $0.0096 \ \ (0.0051)$ & $0.0065 \ \ (0.0041)$ \\
\hline
\end{tabular}
	\caption{Assessment of $\mathrm{MISEs}$ of both the adaptive estimator $\widehat{\sigma}^{2}_{\widehat{K},L}$ and the oracle estimator $\widehat{\sigma}^{2}_{K^{*},L}$ from diffusion paths of size $n=250$.}
	\label{tab:MSEs_adaptive_oracle-n250}
\end{table}

The results of Table~\ref{tab:MSEs_adaptive_oracle-n100}~and Table~\ref{tab:MSEs_adaptive_oracle-n250}~show that the adapted estimator $\widehat{\sigma}^{2}_{\widehat{K}}$ is consistent, since its MISE tends to zero as \textcolor{black}{the number $Nn$ of observations is larger}. Moreover, note that in most cases, the ridge estimators of the compactly supported diffusion coefficients perform better than those of the non-compactly supported diffusion functions. As expected, we observe that the oracle estimator has generally a better performance compared to the adaptive estimator. Nonetheless, we can remark that the performances are very close in several cases, highlighting the efficiency of the data-driven selection of the dimension.

An additional important remark is the significant influence of the \textcolor{black}{number} $n$ of \textcolor{black}{each diffusion path} on the performance of $\widehat{\sigma}^{2}_{\widehat{K}}$ and $\w{\sigma}^{2}_{K^{*},L}$ (by comparison of Table \ref{tab:MSEs_adaptive_oracle-n100} with Table~\ref{tab:MSEs_adaptive_oracle-n250}), which means that estimators built from higher frequency data are more efficient. A similar remark is made for theoretical results obtained in Sections~\ref{subsec:Rate-R-N.paths}~and~\ref{subsec:Rate-CompInterval-N.paths}. 

\paragraph{Performance of the Hermite-based estimator of the diffusion coefficient}

We focus on the estimation of $\sigma^{2}$ on $\R$ and assess the performance of its Hermite-based estimator (see Section~\ref{subsec:Rate-R-N.paths}). We present in Table~\ref{tab:MSEs_hermite}, the performance of the oracle estimator $\w{\sigma}^{2}_{m^{*},L}$. 

\begin{table}[hbtp]
	\centering
\renewcommand{\arraystretch}{1.5}
\begin{tabular}{l|c|c|c|c|c} 
  Models & Intervals & Estimators & $N=10, \ n=100$ & $N=100, \ n=100$ & $N=100, \ n=250$ \\
\hline
 \multirow{1}{*}{Model 1} & \multirow{1}{*}{$\R$} & $\widehat{\sigma}^{2}_{K^{*},L}$ & $0.0082 \ (0.0059)$ & $0.0015 \ \ (0.0008)$  & $0.0006\ \ (0.0004)$ \\ 
 \hline
 \multirow{1}{*}{Model 2} & \multirow{1}{*}{$\R$} & $\widehat{\sigma}^{2}_{K^{*},L}$ & $0.0058 \ (0.0111)$ & $0.0007 \ \ (0.0004)$  & $0.0003\ \ (0.0002)$ \\ 
\hline
\multirow{1}{*}{Model 3} & \multirow{1}{*}{$\R$} & $\widehat{\sigma}^{2}_{K^{*},L}$ & $0.0188 \ (0.0151)$ & $0.0077 \ \ (0.0037)$  & $0.0040 \ \ (0.0036)$ \\ 
\hline
\end{tabular}
	\caption{Assessment of $\mathrm{MISEs}$ of the Hermite-based oracle estimator $\widehat{\sigma}^{2}_{K^{*},L}$ of the square of the diffusion coefficient.}
	\label{tab:MSEs_hermite}
\end{table}

From the numerical results of Table~\ref{tab:MSEs_hermite}, we observe that the Hermite-based estimator of $\sigma^{2}$ is consistent as the sample size $N$ and the length $n$ paths take larger values. 

\paragraph{Estimation of the diffusion coefficient from one path}

Consider ridge estimators of $\sigma^{2}_{|I}$ with $I=[-1,1]$. For the case of the adaptive estimators of $\sigma^{2}_{|I}$, the dimension $\w{K}$ is selected such that 
\begin{equation}
    \label{eq:Selection-N1}
    \w{K} = \underset{K\in\mathcal{K}}{\arg\min}{\gamma_{n}(\w{\sigma}^{2}_{\w{K}}) + \mathrm{pen}(K)}
\end{equation}
where $\mathrm{pen}(K) = \kappa(K+M)\log(n)/n$ with $\kappa >0$. We choose the numerical constant $\kappa = 4$ and we derive the numerical performance of the adaptive estimator of $\sigma^{2}_{|I}$.

\begin{table}[hbtp]
	\centering
\renewcommand{\arraystretch}{1.5}
\begin{tabular}{l|c|c|c|c} 
  Models & Intervals & Estimators & $n=100$ & $n=1000$ \\
\hline
  \multirow{2}{*}{Model 1} & \multirow{2}{*}{$[-1,1]$} & $\widehat{\sigma}^{2}_{\widehat{K},L}$ & $0.1751 \ \ (0.1921)$ & $0.0915 \ \ (0.1925)$ \\
& & $\widehat{\sigma}^{2}_{K^{*},L}$ & $0.1563 \ \ (0.1776)$ & $0.0783 \ \ (0.1699)$ \\
 \hline
 \multirow{2}{*}{Model 2}  & \multirow{2}{*}{$[-1,1]$} & $\widehat{\sigma}^{2}_{\widehat{K},L}$ & $0.1721 \ \ (0.3483)$ & $0.1365 \ \ (0.5905)$ \\
& & $\widehat{\sigma}^{2}_{K^{*},L}$ & $0.0987 \ \ (0.1644)$ & $0.0552 \ \ (0.2409)$ \\
\hline
\multirow{2}{*}{Model 3}  & \multirow{2}{*}{$[-1,1]$} & $\widehat{\sigma}^{2}_{\widehat{K},L}$ & $0.2184 \ \ (0.2780)$ & $0.2106 \ \ (0.5790)$ \\
& & $\widehat{\sigma}^{2}_{K^{*},L}$ & $0.1263 \ \ (0.1486)$ & $0.0751 \ \ (0.1469)$ \\
\hline
\end{tabular}
	\caption{Evaluation of MISEs of adaptive estimators $\widehat{\sigma}^{2}_{\widehat{K},L}$ built from a single diffusion path ($N=1$) for each of the three models.}
	\label{tab:MSEs_adaptive_oracle-N1}
\end{table}

Table~\ref{tab:MSEs_adaptive_oracle-N1} gives the numerical performances of both the adaptive estimator and the oracle estimator of $\sigma^{2}_{|I}$ on the compact interval $I=[-1,1]$ and from a single diffusion path. From the obtained results, we see that the estimators are numerically consistent. However, we note that the convergence is slow (increasing $n$ from $100$ to $1000$), which highlights the significant impact of the number $N$ of paths on the efficiency of the ridge estimator.

\paragraph{Comparison of the efficiency of the ridge estimator of the diffusion coefficient with its \textcolor{black}{Nadaraya$-$Watson} estimator.}

Consider the adaptive estimator $\w{\sigma}^{2}_{\w{K}}$ of the square of the diffusion coefficient buit on the real line $\R$ from a single diffusion path ($N=1$), where the dimension $\w{K}$ is selected using Equation~\eqref{eq:Selection-N1}. For the numerical assessment, we use the interval $I = [-10^6, 10^6]$ to approximate the real line $\R$, and then, use Equation~\eqref{eq:Selection-N1} for the data-driven selection of the dimension.

We want to compare the efficiency of $\w{\sigma}^{2}_{\w{K}}$ with that of the Nadaraya-Watson estimator of $\sigma^{2}$ given from a diffusion path $\bar{X} = (X_{k/n})_{1\leq k\leq n}$ and for all $x \in \R$ by
$$ S_n(x) = \frac{\sum_{k=1}^{n-1}{K\left(\frac{X_{k/n} - x}{h_n}\right)[X_{(k+1)/n} - X_{k/n}]^2/n}}{\sum_{k=1}^{n}{K\left(\frac{X_{k/n} - x}{h_n}\right)}} $$
where $K$ is a positive kernel function, and $h_n$ is the bandwidth. Thus, the estimator $S_n(x)$ is consistent under the condition $nh^{4}_{n} \rightarrow 0$ as $n$ tends to infinity (see \cite{florens1993estimating}). We use the function \texttt{ksdiff()} of the R-package \texttt{sde} to compute the \textcolor{black}{Nadaraya$-$Watson} estimator $S_n$.

\begin{table}[hbtp]
	\centering
\renewcommand{\arraystretch}{1.5}
\begin{tabular}{l|c|c} 
  Models & Ridge estimator & Nadaraya-Watson estimator \\
\hline
  \multirow{1}{*}{Model 1} & $0.0020 \ \ (0.0023)$ & $0.9377 \ \ (0.0017)$ \\ 
 \hline
 \multirow{1}{*}{Model 2}  & $0.1323 \ \ (0.0794)$ & $0.5086 \ \ (0.0885)$ \\ 
\hline
\multirow{1}{*}{Model 3}  & $0.4077 \ \ (0.1178)$ & $1.3175 \ \ (0.3039)$ \\ 
\hline
\end{tabular}
	\caption{This table shows the loss errors of the ridge estimator $\w{\sigma}^{2}_{\w{K},L}$ on $\R$ and the Nadaraya-Watson estimator $S_n$ of $\sigma^{2}$ built from a diffusion path ($N=1)$ of length $n=1000$.}
	\label{tab:MSEs_Ridge_NW}
\end{table}

We remark from the results of Table~\ref{tab:MSEs_Ridge_NW} that our ridge estimator is more efficient. Note that for the kernel estimator $S_n$, the bandwidth is computed using the rule of thumb of Scott (see \cite{odell1992multivariate}). The bandwidth is proportional to $n^{-1/(d+4)}$ where $n$ is the number of points, and $d$ is the number of spatial dimensions.

\subsection{Concluding remarks}\label{subsec:discussion}

The results of our numerical study show that our ridge estimators built both on a compact interval and on the real line are consistent as $N$ and $n$ take larger values, or as only $n$ takes larger values when the estimators are built from a single path. These results are in accordance with the theoretical results established in the previous sections. Moreover, as expected, we obtained the consistency of the Hermite-based estimators of $\sigma^{2}$ on the real line $\R$. Nonetheless, we only focus on the Hermite-based oracle estimator since we did not establish a risk bound of the corresponding adaptive estimator. Finally, we remark that the ridge estimator of $\sigma^{2}$ built from a single path performs better than its Nadaraya-Watson kernel estimator proposed in \cite{florens1993estimating} and implemented in the R-package \texttt{sde}.

\begin{figure}[hbtp]
    \centering
    \includegraphics[width=0.8\linewidth, height=0.5\textheight]{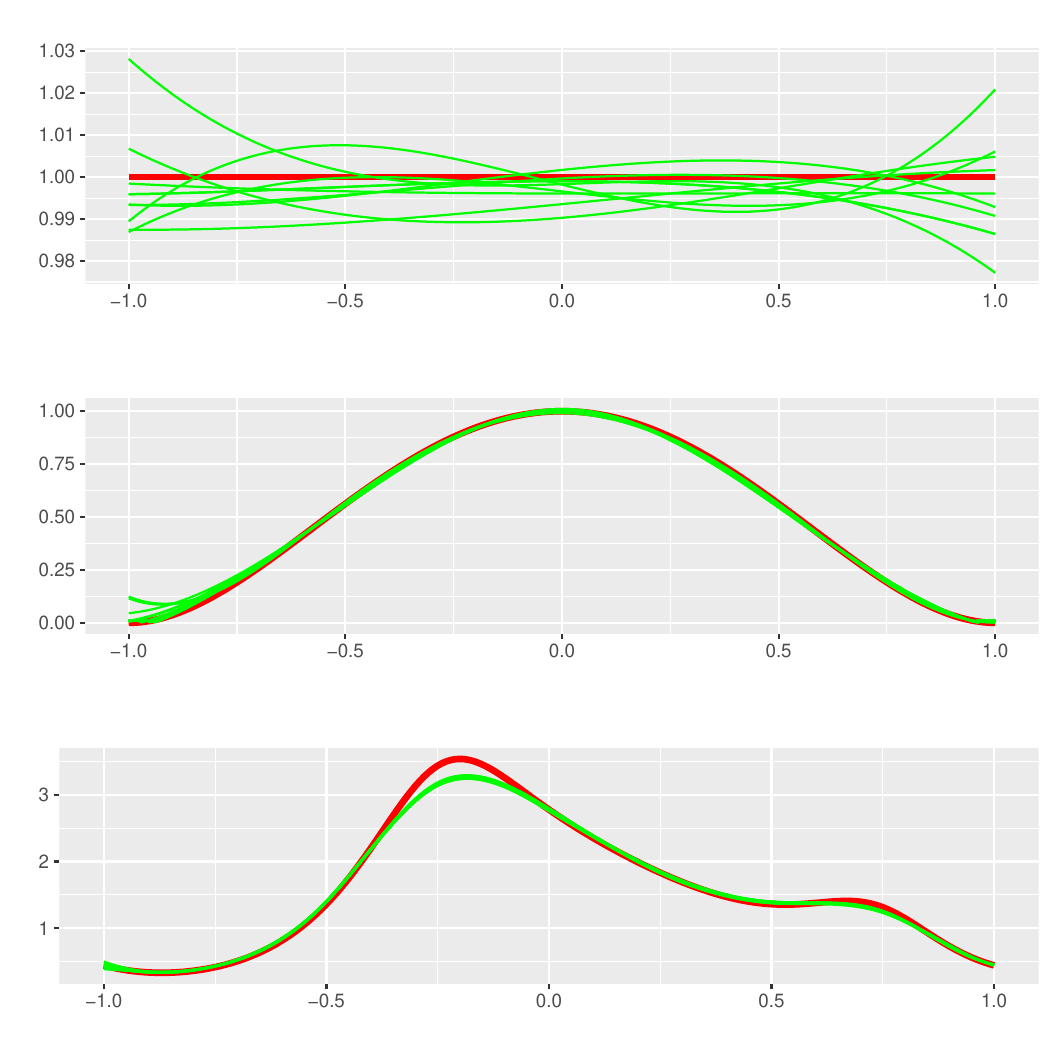}
    \caption{Bundles of $10$ estimators $\w{\sigma}^{2}_{\w{K},L}$ (in green) of the true diffusion coefficient $\sigma^{2}_{|I}$ restricted on the compact interval $I = [-1,1]$ (in red) of each of Models $1, 2, 3$ (from top to bottom) using samples of size $N=1000$ with diffusion paths of length $n=500$.}
    \label{fig:plot}
\end{figure}

\section{Conclusion}
\label{sec:Conclusion}

In this article, we have proposed ridge-type estimators of the diffusion coefficient on a compact interval from a single diffusion path. We took advantage of the local time of the diffusion process to prove the consistency of non-adaptive estimators of $\sigma^{2}$ and derive a rate of convergence of the same order than the optimal rate established in \cite{hoffmann1999lp}. We also propose an estimator of $\sigma^{2}$ on the real line from a single path. We proved its consistency using the method described in Section~\ref{subsec:Estimation-R-OnePath}, and derive a rate of convergence order $n^{-\beta/(4\beta+1)}$ over a H\"older space for the collection $[\mathbf{B}]$. Then, we extended the study to the estimation of $\sigma^{2}$ from repeated discrete observations of the diffusion process. We establish rates of convergence of the ridge estimators both on a compact interval and on $\R$. We complete the study proposing adaptive estimators of $\sigma^{2}$ on a compact interval for $N=1$ and $N\rightarrow\infty$, and on the real line $\R$ for $N\rightarrow \infty$.

A perspective on the estimation of the diffusion coefficient could be the establishment of a minimax rate of convergence of the compactly supported (square of the) diffusion coefficient from repeated discrete observations of the diffusion process. The case of the non-compactly supported diffusion coefficient may be a lot more challenging, since the transition density of the diffusion process is no longer lower-bounded. This new fact can lead to different rates of convergence depending on the considered method (see Section~\ref{sec:Estimation-N.paths}).


\section*{Acknowledgements}

I would like to thank my supervisors, Christophe Denis,  Charlotte Dion-Blanc, and Viet-Chi Tran, for their sound advice, guidance and support throughout this research project. 

\section*{Statements and Declarations}

I have no conflicts of interests to declare that are relevant to the content of this article. No funding was received to assist with the preparation of this paper. 


\section{Proofs}
\label{sec:proof}

In this section, we prove our main results of Sections~\ref{sec:Estimation-OnePath},~\ref{sec:Estimation-N.paths} and \ref{sec:AdaptiveEstimation-N.paths}. We denote by $(\mathcal{F}_t)_{t\in [0,1]}$ the natural filtration of the diffusion process $X$. To simplify our notations, we set $\Delta_n = \Delta(=1/n)$ and constants are generally denoted by $C>0$ or $c>0$ whose values can change from a line to another. Moreover, we use the notation $C_{\alpha}$ in case we need to specify the dependency of the constant $C$ on a parameter $\alpha$.

\subsection{Technical results}

Recall first some useful results on the local time and estimates of the transition density of diffusion processes.

\begin{lemme}
\label{lem:discrete-bis}
For all integer $q\geq 1$, there exists $C^{*}>0$ depending on $q$ such that for all $0\leq s<t\leq 1$,
\begin{equation*}
\E\left[\left|X_t-X_s\right|^{2q}\right]\leq C^{*}(t-s)^{q}.
\end{equation*}
\end{lemme}

The proof of Lemma~\ref{lem:discrete-bis} is provided in \cite{denis2024nonparametric}.

\begin{prop}
\label{prop:densityTransition-bis}
Under Assumptions~\ref{ass:Assumption 1}, there exist constants $c_{\sigma} >1$, $C > 1$ such that for all $t \in (0,1]$, $x \in \mathbb{R}$,
\begin{equation*}
\frac{1}{C\sqrt{t}} \exp\left(-c_{\sigma}\frac{x^2}{t}\right) \leq  p_{X}(t,x) \leq \dfrac{C}{\sqrt{t}} \exp\left(-\frac{x^2}{c_{\sigma}t}\right).
\end{equation*}
\end{prop}

The proof of Proposition~\ref{prop:densityTransition-bis}~is provided in \cite{gobet2002lan}, \textit{Proposition 1.2}.

\begin{prop}
\label{prop:approx-bis}
Let $h$ be a $L_0$-\textcolor{black}{Lipschitz} function. Then there exists $\tilde{h} \in \mathcal{S}_{K_N,M}$, such that
\begin{equation*}
|\tilde{h}(x)-h(x)| \leq C \frac{\log(N)}{K_N}, \;\; \forall x \in (-\log(N),\log(N)),
\end{equation*}
where $C >0$ depends on $L_0$, and $M$.
\end{prop}

The proof of Proposition~\ref{prop:approx-bis} is provided in \cite{denis2024nonparametric}. The finite-dimensional vector space $\mathcal{S}_{K_N,M} = \mathcal{S}_{K_N+M}$ is introduced in Section~\ref{sec:framework and assumptions}.

\begin{lemme}
\label{lem:controleSortiCompact-bis}
Under Assumption~\ref{ass:Assumption 1}, there exist $C_1,C_2 >0$ such that for all $A >0$,
\begin{equation*}
\sup_{t \in [0,1]}\P\left(\left|X_t\right|\geq A\right) 
\leq \frac{C_1}{A} \exp(-C_2A^2).
\end{equation*}
\end{lemme}

The proof of Lemma~\ref{lem:controleSortiCompact-bis}~is provided in \cite{denis2024nonparametric}, \textit{Lemma 7.3}.

\begin{lemme}
    \label{lm:LocalTimeBicontinuous}
    Under Assumption~\ref{ass:Assumption 1}, the following holds:
    \begin{equation*}
        \forall~x\in\R, ~~ \mathcal{L}^{x} = \mathcal{L}^{x_{-}} ~~ a.s.
    \end{equation*}

    where $\mathcal{L}^{x_{-}} = \underset{\varepsilon\rightarrow 0}{\lim}{\mathcal{L}^{x-\varepsilon}}$.
\end{lemme}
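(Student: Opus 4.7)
The plan is to combine the Tanaka–Meyer formula with the transition density estimate of Proposition~\ref{prop:densityTransition}. Write the diffusion as $X=X_{0}+M+V$ with martingale part $M_{t}=\int_{0}^{t}\sigma(X_{s})\,dW_{s}$ and finite variation part $V_{t}=\int_{0}^{t}b(X_{s})\,ds$. For the right-continuous (in the space variable) version $L^{x}$ of the local time of the continuous semimartingale $X$, the Tanaka–Meyer formula (Revuz–Yor, Chap.~VI, Thm.~1.7 and Cor.~1.9) gives the jump identity
$$L_{1}^{x}-L_{1}^{x_{-}}=2\int_{0}^{1}\one_{\{X_{s}=x\}}\,dV_{s}=2\int_{0}^{1}\one_{\{X_{s}=x\}}b(X_{s})\,ds.$$
Since the paper's $\mathcal{L}^{x}$ is the symmetric local time, $\mathcal{L}^{x}=\tfrac{1}{2}(L^{x}+L^{x_{-}})$, and because $y\mapsto L^{y}$ is càdlàg with left limit $L^{x_{-}}$ at $x$, the lemma $\mathcal{L}^{x}=\mathcal{L}^{x_{-}}$ a.s.\ reduces to showing that the right-hand side above vanishes almost surely.

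To prove the vanishing, I would use the bounded transition density provided by Proposition~\ref{prop:densityTransition}: for every $s\in(0,1]$, $X_{s}$ admits a bounded density $p_{X}(s,\cdot)$, so $\P(X_{s}=x)=0$ for all $s>0$ and every $x\in\R$. By Fubini's theorem (the indicator is jointly measurable),
$$\E\!\left[\int_{0}^{1}\one_{\{X_{s}=x\}}\,ds\right]=\int_{0}^{1}\P(X_{s}=x)\,ds=0,$$
so $\int_{0}^{1}\one_{\{X_{s}=x\}}\,ds=0$ almost surely. Combined with the fact that $|b(X_{s})|\le L_{0}(1+|X_{s}|)$ by Assumption~\ref{ass:Assumption 1}(i) and that $\sup_{s\in[0,1]}|X_{s}|$ is a.s.\ finite (with all moments by~\eqref{eq:ConseqAssumption1}), the map $s\mapsto b(X_{s})$ is a.s.\ bounded on $[0,1]$. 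Hence the weighted integral $\int_{0}^{1}\one_{\{X_{s}=x\}}b(X_{s})\,ds$ also vanishes almost surely, and the Tanaka–Meyer identity gives $L_{1}^{x}=L_{1}^{x_{-}}$ a.s., which is equivalent to $\mathcal{L}^{x}=\mathcal{L}^{x_{-}}$ a.s.

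The principal technical input is the classical Tanaka–Meyer identification of the jump of $x\mapsto L^{x}$ for continuous semimartingales; everything else is an elementary use of the density bounds already at our disposal. The only subtlety worth flagging is that the null set depends on the level $x$, which matches the pointwise formulation of Lemma~\ref{lm:LocalTimeBicontinuous}. If one needed the stronger statement that $(t,x)\mapsto\mathcal{L}_{t}^{x}$ is jointly continuous (bicontinuity uniform in $x$), the upgrade would follow from $L^{p}$-moment estimates on the increments $\mathcal{L}^{x}-\mathcal{L}^{y}$ (themselves derivable from Proposition~\ref{prop:densityTransition}) together with Kolmogorov's continuity criterion, but this is not needed for the stated pointwise claim.
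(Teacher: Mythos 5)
Your proof is correct and follows essentially the same route as the paper: both rest on the Revuz--Yor jump formula for the local time of a continuous semimartingale together with the fact that $X_s$ admits a density, so that $\P(X_s=x)=0$ and the occupation integral over the level set $\{X_s=x\}$ vanishes. The only (harmless) difference is that you invoke the version of the theorem in which the jump is driven solely by the finite-variation part $dV_s$, which spares you the Cauchy--Schwarz step the paper uses to kill the stochastic-integral contribution $\int_0^1\one_{\{X_s=x\}}\sigma(X_s)\,dW_s$.
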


The result of Lemma~\ref{lm:LocalTimeBicontinuous} justifies the definition of the local time $\mathcal{L}^{x}$, for $x\in\R$, given in Equation~\eqref{eq:LocalTime}. \textcolor{black}{The proof of Lemma~\ref{lm:LocalTimeBicontinuous} is provided in appendix.}

\subsection{Proofs of Section~\ref{sec:Estimation-OnePath}}

\subsubsection{Proof of Theorem~\ref{thm:RiskBound-OnePath}}

Let $\Omega_{n,m}$ \textcolor{black}{be a random event given by 
\begin{equation*}
    \Omega_{n,m} := \underset{g\in\mathcal{S}_{m}\setminus \{0\}}{\bigcap}\left\{\left|\frac{\|g\|^{2}_{n,1}}{\|g\|^{2}_{X}}-1\right| \leq \frac{1}{2}\right\}.
\end{equation*}
On the event $\Omega_{n,m}$, the two pseudo-norms $\|.\|_{n,1}$ and $\|.\|_{X}$ are equivalent with $(1/2)\|g\|^{2}_{X} \leq \|g\|^{2}_{n,1} \leq (3/2)\|g\|^{2}_{X}$ for each function $g \in \mathcal{S}_{m}\setminus\{0\}$.
}
The proof of Theorem~\ref{thm:RiskBound-OnePath}~relies on the following lemma.

\begin{lemme}
\label{lm:Proba-OmegaComp-OnePath}
 Let $\gamma > 1$ be a real number. Under Assumption~\ref{ass:Assumption 1}, the following holds
    \begin{equation*}
      \P\left(\Omega^{c}_{n,m}\right) \leq \textcolor{black}{C\frac{m^{\alpha\gamma}}{n^{\gamma/2}}},
    \end{equation*}
 where $C>0$ is a constant depending on $\gamma$, \textcolor{black}{with $\alpha = 2$ for the collection [\textbf{B}], and $\alpha = (r+r^{\prime})/2$ for the collection [\textbf{CS$-$OB}]}.
\end{lemme}

The parameter $\gamma > 1$ has to be chosen appropriately so that we obtain a variance term of the risk of the estimator $\w{\sigma}^{2}_{m}$ \textcolor{black}{that is negligible with respect to $m^{\alpha^{\prime}}/n$ as $n$ tends to infinity} (see Theorem~\ref{thm:RiskBound-OnePath} and Corollary~\ref{coro:Rate-OnePath}). \textcolor{black}{The proof of Lemma~\ref{lm:Proba-OmegaComp-OnePath} is provided in appendix.}

\begin{proof}[{\bf Proof of Theorem~\ref{thm:RiskBound-OnePath}~}]
Recall that since $N = 1$,
$\zeta^{1}_{k\Delta}=\zeta^{1,1}_{k\Delta}+\zeta^{1,2}_{k\Delta}+\zeta^{1,3}_{k\Delta}$ is the error term of the regression model, with:
\begin{equation}
\label{eq:zeta 1}
    \zeta^{1,1}_{k\Delta}=\frac{1}{\Delta}\left[\left(\int_{k\Delta}^{(k+1)\Delta}{\sigma(X^{1}_{s})dW^{1}_{s}}\right)^2-\int_{k\Delta}^{(k+1)\Delta}{\sigma^{2}(X^{1}_{s})ds}\right],
\end{equation}
\begin{equation}
\label{eq:zeta 2}
    \zeta^{1,2}_{k\Delta}=\frac{2}{\Delta}\int_{k\Delta}^{(k+1)\Delta}{((k+1)\Delta-s)\sigma^{\prime}(X^{1}_{s})\sigma^{2}(X^{1}_{s})dW^{1}_{s}},
\end{equation}
\begin{equation}
\label{eq:zeta 3}
\zeta^{1,3}_{k\Delta}=2b(X^{1}_{k\Delta})\int_{k\Delta}^{(k+1)\Delta}{\sigma\left(X^{1}_{s}\right)dW^{1}_{s}}.
\end{equation}
Besides,
 \textcolor{black}{$R^{1}_{k\Delta}
=R^{1,1}_{k\Delta}+R^{1,2}_{k\Delta} + R^{1,3}_{k\Delta}$, with:
\begin{equation}
\label{eq: R 1}
    R^{1,1}_{k\Delta}=\frac{1}{\Delta}\left(\int_{k\Delta}^{(k+1)\Delta}{b(X^{1}_{s})ds}\right)^2, ~~ R^{1,2}_{k\Delta} = \frac{1}{\Delta}\int_{k\Delta}^{(k+1)\Delta}{((k+1)\Delta-s)\Phi(X^{1}_{s})ds}
\end{equation}
\begin{equation}
\label{eq: R 2}
R^{1,3}_{k\Delta}=\frac{2}{\Delta}\left(\int_{k\Delta}^{(k+1)\Delta}{\left(b(X^{1}_{s})-b(X^{1}_{k\Delta})\right)ds}\right)\left(\int_{k\Delta}^{(k+1)\Delta}{\sigma(X^{1}_{s})dW^{1}_{s}}\right)
\end{equation}
}
where 
\begin{equation}
  \label{eq:SigmaDerivatives}  \Phi:=2b\sigma^{\prime}\sigma+\left[\sigma^{\prime\prime}\sigma+\left(\sigma^{\prime}\right)^2\right]\sigma^{2}.
\end{equation}
By definition of the projection estimator $\widehat{\sigma}^{2}_{m}$ for each $m\in\mathcal{M}$ (see Equation~\eqref{eq:non adaptive estimator}), for all $h\in\mathcal{S}_{m,L}$, we have:
\begin{equation}
\label{eq:Ineq-Gamma}
    \gamma_{n,1}\left(\widehat{\sigma}^{2}_{m}\right)-\gamma_{n,1}(\sigma^{2}_{|I})\leq\gamma_{n,1}(h)-\gamma_{n,1}(\sigma^{2}_{|I}).
\end{equation}
Furthermore, for all $h\in\mathcal{S}_{m,L}$, 
$$\gamma_{n,1}(h)-\gamma_{n,1}(\sigma^{2}_{|I})=\left\|\sigma^{2}_{|I}-h\right\|^{2}_{n,1}+2\nu_{1}(\sigma^{2}_{|I}-h)+2\nu_{2}(\sigma^{2}_{|I}-h)+2\nu_{3}(\sigma^{2}_{|I}-h)+2\mu(\sigma^{2}_{|I}-h),$$
where, 
\begin{equation}
    \label{eq: nu and mu}
    \nu_{i}\left(h\right) = \frac{1}{n}\sum_{k=0}^{n-1}{h(X^{1}_{k\Delta})\zeta^{1,i}_{k\Delta}}, \ \ i\in\{1,2,3\}, \ \ \ \ \mu(h)=\frac{1}{n}\sum_{k=0}^{n-1}{h(X^{1}_{k\Delta})R^{1}_{k\Delta}}, 
\end{equation}
and $\zeta^{1,1}_{k\Delta}, \ \zeta^{1,2}_{k\Delta}, \ \zeta^{1,3}_{k\Delta}$ are given in Equations \eqref{eq:zeta 1}, \eqref{eq:zeta 2}, \eqref{eq:zeta 3}, and finally, $R^{1}_{k\Delta} = R^{1,1}_{k\Delta}+R^{1,2}_{k\Delta}$ given in Equations \eqref{eq: R 1} and \eqref{eq: R 2}. Then, for all $m \in \mathcal{M}$, and for all $h \in \mathcal{S}_{m,L}$, we obtain from Equation~\eqref{eq:Ineq-Gamma} that
\begin{equation*}
\left\|\widehat{\sigma}^{2}_{m}-\sigma^{2}_{|I}\right\|^{2}_{n,1}\leq \left\|h-\sigma^{2}_{|I}\right\|^{2}_{n,1}+2\nu\left(\widehat{\sigma}^{2}_{m}-h\right)+2\mu\left(\widehat{\sigma}^{2}_{m}-h\right), \ \ \mathrm{with} \ \ \nu=\nu_1+\nu_2+\nu_3.
\end{equation*}
Then, it comes,
\begin{equation}
    \label{eq:equation2-proof1}
    \mathbb{E}\left[\left\|\widehat{\sigma}^{2}_{m}-\sigma^{2}_{|I}\right\|^{2}_{n,1}\right] \leq \underset{h\in\mathcal{S}_{m,L}}{\inf}{\left\|h-\sigma^{2}_{|I}\right\|^{2}_{n}}+2\mathbb{E}\left[\nu\left(\widehat{\sigma}^{2}_{m}-h\right)\right]+2\mathbb{E}\left[\mu\left(\widehat{\sigma}^{2}_{m}-h\right)\right].
\end{equation}

Besides, for any $a,d>0$, using the inequality $xy \leq \eta x^2 + y^2/\eta$ with $\eta = a, d$, we have,
\begin{equation}
\label{eq:Devlopp-Nu-Mu}
\begin{cases}
2\nu\left(\widehat{\sigma}^{2}_{m}-h\right) \leq \frac{2}{a}\left\|\widehat{\sigma}^{2}_{m}-\sigma^{2}_{|I}\right\|^{2}_{X}+\frac{2}{a}\left\|h-\sigma^{2}_{|I}\right\|^{2}_{X}+a\underset{h\in\mathcal{S}_{m}, \ \|h\|_X=1}{\sup}{\nu^{2}(h)},\\ \\
     2\mu\left(\widehat{\sigma}^{2}_{m}-h\right) \leq \frac{2}{d}\left\|\widehat{\sigma}^{2}_{m}-\sigma^{2}_{|I}\right\|^{2}_{n,1}+\frac{2}{d}\left\|h-\sigma^{2}_{|I}\right\|^{2}_{n,1}+\frac{d}{n}\sum_{k=1}^{n}{(R^{1}_{k\Delta})^2}.
\end{cases}
\end{equation}

\subsubsection*{Upper bound of $\frac{1}{n}\sum_{k=1}^{n}{(R^{1}_{k\Delta})^2}$}

We have: 
\begin{align*}
    &~ \forall k\in[\![1,n]\!], \ R^{1}_{k\Delta}=R^{1,1}_{k\Delta}+R^{1,2}_{k\Delta}+R^{1,3}_{k\Delta} \ \mathrm{with}, \\
    &~ R^{1,1}_{k\Delta}=\frac{1}{\Delta}\left(\int_{k\Delta}^{(k+1)\Delta}{b(X^{1}_s)ds}\right)^2, \ \
	R^{1,2}_{k\Delta}=\frac{1}{\Delta}\int_{k\Delta}^{(k+1)\Delta}{((k+1)\Delta-s)\Phi(X^{1}_s)ds} \\
    &~ R^{1,3}_{k\Delta}=\frac{2}{\Delta}\left(\int_{k\Delta}^{(k+1)\Delta}{\left(b(X^{1}_s)-b(X^{1}_{k\Delta})\right)ds}\right)\left(\int_{k\Delta}^{(k+1)\Delta}{\sigma(X^{1}_s)dW^{1}_s}\right).
\end{align*} 
For all $k\in[\![1,n]\!]$, using the Cauchy-Schwarz inequality and Lemma~\eqref{lm:ConseqAssumption1}, 
\begin{align*} 
\mathbb{E}\left[\left|R^{1,1}_{k\Delta}\right|^2\right] & \leq \mathbb{E}\left[\left(\int_{k\Delta}^{(k+1)\Delta}{b^{2}(X^{1}_{k\Delta})ds}\right)^2\right]\leq\Delta\mathbb{E}\left[\int_{k\Delta}^{(k+1)\Delta}{b^{4}(X^{1}_{k\Delta})ds}\right]\leq C\Delta^2.
\end{align*}
Consider now the term $R^{1,2}_{k\Delta}$. From Equation~\eqref{eq:SigmaDerivatives}, we have $\Phi=2b\sigma^{\prime}\sigma+\left[\sigma^{\prime\prime}\sigma+\left(\sigma^{\prime}\right)^2\right]\sigma^{2}$ and according to Assumption~\ref{ass:Assumption 1}, there exists a constant $C>0$ depending on $\sigma_1$ and $\alpha$ such that
$$ \left|\Phi(X^{1}_{s})\right| \leq C\left[(2+|X^{1}_{s}|)(1+|X^{1}_{s}|^{\alpha}) + (1+|X^{1}_{s}|^{\alpha})^2\right]. $$
Then, from Lemma~\eqref{lm:ConseqAssumption1} and for all $s\in(0,1]$,
\begin{equation*}
    \E\left[\Phi^{2}(X^{1}_{s})\right] \leq C\underset{s\in(0,1]}{\sup}{\E\left[(2+|X^{1}_{s}|)^2(1+|X^{1}_{s}|^{\alpha})^2 + (1+|X^{1}_{s}|^{\alpha})^4\right]} < \infty
\end{equation*}
and
\begin{align*}
\mathbb{E}\left[\left|R^{1,2}_{k\Delta}\right|^2\right] & \leq \frac{1}{\Delta^2}\int_{k\Delta}^{(k+1)\Delta}{((k+1)\Delta-s)^2ds}\int_{k\Delta}^{(k+1)\Delta}{\mathbb{E}\left[\Phi^{2}\left(X^{1}_s\right)\right]ds}\leq C\Delta^2
\end{align*}
Finally, under Assumption~\ref{ass:Assumption 1}, from Lemma~\eqref{lm:ConseqAssumption1} and using the Cauchy-Schwarz inequality, we have
\begin{align*}
\mathbb{E}\left[\left|R^{1,3}_{k\Delta}\right|^2\right] & \leq \frac{4}{\Delta^2}\mathbb{E}\left[\Delta\int_{k\Delta}^{(k+1)\Delta}{L^{2}_{0}\left|X^{1}_s-X^{1}_{k\Delta}\right|^2ds}\left(\int_{k\Delta}^{(k+1)\Delta}{\sigma(X^{1}_s)dW_s}\right)^2\right]\\
	& \leq \frac{4}{\Delta}\sqrt{\mathbb{E}\left[L^{4}_{0}\Delta\int_{k\Delta}^{(k+1)\Delta}{\left|X^{1}_s-X^{1}_{k\Delta}\right|^4ds}\right]\mathbb{E}\left[\left(\int_{k\Delta}^{(k+1)\Delta}{\sigma(X^{1}_s)dW_s}\right)^4\right]}\\
	& \leq C\Delta^2.
\end{align*}
As a result, there exists a constant $C>0$ such that,
\begin{equation}
\label{eq:UpperBound-TimeStep}
\mathbb{E}\left[\frac{1}{n}\sum_{k=1}^{n}{(R^{1}_{k\Delta})^2}\right]\leq C\Delta^2.
\end{equation}
We set $a = d = 8$ and considering the event $\Omega_{n,m}$ on which the empirical norms $\|.\|_X$ and $\|.\|_{n,1}$ are equivalent,  we deduce from Equations~\eqref{eq:equation2-proof1},~\eqref{eq:Devlopp-Nu-Mu}~and~\eqref{eq:UpperBound-TimeStep} that,
\begin{equation}
\label{eq:equation3-proof1}
	\mathbb{E}\left[\left\|\widehat{\sigma}^{2}_{m}-\sigma^{2}_{|I}\right\|^{2}_{n,1}\one_{\Omega_{n,m}}\right]\leq 3\underset{h\in\mathcal{S}_{m}}{\inf}{\left\|h-\sigma^{2}_{|I}\right\|^{2}_{n}}+C\mathbb{E}\left(\underset{h\in\mathcal{S}_{m}, \|h\|_{X}=1}{\sup}{\nu^{2}(h)}\right)+C\Delta^2
\end{equation}
where $C>0$ is a constant depending on $\sigma_1$.

\subsection*{Upper bound of $\mathbb{E}\left(\underset{h\in\mathcal{S}_{m}, \ \|h\|_{X}=1}{\sup}{\nu^{2}(h)}\right)$}

For all $h=\sum_{\ell=0}^{m-1}{a_{\ell}\phi_{\ell}}\in\mathcal{S}_{m}$ such that $\|h\|^{2}_{X}=1$, we have $\|h\|^{2}\leq\frac{1}{\tau_0}$ (see Equation~\eqref{eq:Equiv-NormX-NormL2}) and the coordinate vector $\mathbf{a} = \left(a_{-M},\cdots,a_{K-1}\right)$ satisfies:
\begin{itemize}
    \item $\|\mathbf{a}\|^{2}_{2}\leq Cm ~~ (m = K+M)$ for \textcolor{black}{the collection [\textbf{B}]} (see \cite{denis2020ridge}, Lemma 2.6)
    \item $\|\mathbf{a}\|^{2}_{2} \leq 1/\tau_0$ for \textcolor{black}{the collection [\textbf{CS$-$OB}]} since $\|h\|^2 = \|\mathbf{a}\|^{2}_{2}$.
\end{itemize}
Furthermore, using the Cauchy$-$Schwarz inequality, we have:
\begin{equation*}
    \nu^{2}(h)=\left(\sum_{\ell=0}^{m-1}{a_{\ell}\nu\left(\phi_{\ell}\right)}\right)^2\leq\|\mathbf{a}\|^{2}_{2}\sum_{\ell=0}^{m-1}{\nu^{2}\left(\phi_{\ell}\right)}.
\end{equation*}
Thus, since $\nu=\nu_1+\nu_2+\nu_3$, for all $\ell\in[\![-M,K-1]\!]$ and for all $i\in\{1,2,3\}$,
\begin{align*}
    \mathbb{E}\left[\nu^{2}_{i}\left(\phi_{\ell}\right)\right]=&~\frac{1}{n^2}\mathbb{E}\left[\left(\sum_{k=0}^{n-1}{\phi_{\ell}\left(X^{1}_{k\Delta}\right)\zeta^{1,i}_{k\Delta}}\right)^2\right].
\end{align*}
\begin{enumerate}
    \item Case $i=1$

    Recall that $\zeta^{1,1}_{k\Delta}=\frac{1}{\Delta}\left[\left(\int_{k\Delta}^{(k+1)\Delta}{\sigma(X^{1}_{s})}dW_s\right)^2-\int_{k\Delta}^{(k+1)\Delta}{\sigma^{2}(X^{1}_{s})ds}\right]$ where $W=W^{1}$.
     We fix a initial time $s\in[0,1)$ and set $M^{s}_t=\int_{s}^{t}{\sigma(X^{1}_u)dW_u}, \ \forall t\geq s$. $(M^{s}_t)_{t\geq s}$ is a martingale and for all $t\in[s,1]$, we have:  
    \begin{align*}
      \left<M^{s},M^{s}\right>_t=\int_{s}^{t}{\sigma^{2}\left(X^{1}_{u}\right)du}.
    \end{align*}
     Then, $\zeta^{1,1}_{k\Delta}=\frac{1}{\Delta}\left(M^{k\Delta}_{(k+1)\Delta}\right)^2-\left<M^{k\Delta},M^{k\Delta}\right>_{(k+1)\Delta}$ is also a $\mathcal{F}_{k\Delta}$-martingale, and, using the Burkholder-Davis-Gundy inequality, we obtain for all $k\in[\![0,n-1]\!]$,
   \begin{equation}
     \label{eq:martingale-burkholder-davis-gundy}
      \mathbb{E}\left[\zeta^{1,1}_{k\Delta}|\mathcal{F}_{k\Delta}\right]=0, \ \ \ \mathbb{E}\left[\left(\zeta^{1,1}_{k\Delta}\right)^2|\mathcal{F}_{k\Delta}\right]\leq \frac{C}{\Delta^2}\mathbb{E}\left[\left(\int_{k\Delta}^{(k+1)\Delta}{\sigma^{2}(X^{1}_u)du}\right)^2\right]\leq C\sigma^{4}_{1}.
   \end{equation}
Then, using Equation~\eqref{eq:martingale-burkholder-davis-gundy} we have:
\begin{align*}
    \mathbb{E}\left[\nu^{2}_{1}\left(\phi_{\ell}\right)\right] = &~\frac{1}{n^2}\mathbb{E}\left[\sum_{k=0}^{n-1}{\phi^{2}_{\ell}\left(X^{1}_{k\Delta}\right)\left(\zeta^{1,1}_{k\Delta}\right)^2}\right]=\frac{1}{n^2}\mathbb{E}\left[\sum_{k=0}^{n-1}{\phi^{2}_{\ell}\left(X^{1}_{k\Delta}\right)\mathbb{E}\left[\left(\zeta^{1,1}_{k\Delta}\right)^2|\mathcal{F}_{k\Delta}\right]}\right]\\
    \leq &~\frac{C\sigma^{4}_{1}}{n^2}\mathbb{E}\left[\sum_{k=0}^{n-1}{\phi^{2}_{\ell}\left(X^{1}_{k\Delta}\right)}\right]
\end{align*}
and,
\begin{align*}
    \sum_{\ell=0}^{m-1}{\mathbb{E}\left[\nu^{2}_{1}\left(\phi_{\ell}\right)\right]}\leq \frac{C\sigma^{4}_{1}}{n^2}\mathbb{E}\left[\sum_{k=0}^{n-1}{\sum_{\ell=0}^{m-1}{\phi^{2}_{\ell}\left(X^{1}_{k\Delta}\right)}}\right].
\end{align*}
One has: 
\begin{equation*}
\begin{cases}
    \sum_{\ell=-M}^{K-1}{B^{2}_{\ell}\left(X^{1}_{\eta(s)}\right)} \leq 1 ~~ \mathrm{for} ~ \textcolor{black}{\mathrm{the} ~ \mathrm{Collection} ~ [\mathbf{B}]} ~~ (m = K + M), \\ \\
    \sum_{\ell = 0}^{m-1}{\phi^{2}_{\ell}(X^{1}_{\eta(s)})} \leq \textcolor{black}{Cm^{r} ~~ \mathrm{for} ~ \mathrm{the} ~ \mathrm{collection} ~ [\mathbf{CS-OB}]}.
\end{cases}
\end{equation*}
Thus, it \textcolor{black}{follows} that 
\begin{itemize}
    \item $\sum_{\ell=-M}^{K-1}{\mathbb{E}\left[\nu^{2}_{1}\left(B_{\ell}\right)\right]}\leq C/n ~~ \mathrm{for} ~ \mathrm{the} ~ \mathrm{Spline} ~ \mathrm{basis,} $
    \item $\sum_{\ell=0}^{m-1}{\mathbb{E}\left[\nu^{2}_{1}\left(\phi_{\ell}\right)\right]}\leq \textcolor{black}{Cm^{r}/n ~~ \mathrm{for} ~ \mathrm{the} ~ \mathrm{collection} ~ [\textbf{CS-OB}]},$
\end{itemize}
and,
\begin{equation}
    \label{eq:upper bound of nu - case i=1}
    \mathbb{E}\left(\underset{h\in\mathcal{S}_{m}, \ \|h\|^{2}_{X}=1}{\sup}{\nu^{2}_{1}(h)}\right)\leq \textcolor{black}{C\frac{m^{\alpha^{\prime}}}{n}}
\end{equation}
where $C>0$ is a constant depending on $\sigma_{1}$ and the basis, \textcolor{black}{with $\alpha^{\prime} = 1$ for the collection [\textbf{B}], and $\alpha^{\prime} = r$ for the collection [\textbf{CS$-$OB}]}.
\item Case $i=2$

\textcolor{black}{We} have $\zeta^{1,2}_{k\Delta}=\frac{2}{\Delta}\int_{k\Delta}^{(k+1)\Delta}{\left((k+1)\Delta-s\right)\sigma^{\prime}\left(X^{1}_{s}\right)\sigma^{2}\left(X^{1}_{s}\right)dW_s}$ and,
\begin{align*}
    \mathbb{E}\left[\nu^{2}_{2}\left(\phi_{\ell}\right)\right] = &~4\mathbb{E}\left[\left(\sum_{k=0}^{n-1}{\phi_{\ell}\left(X^{1}_{k\Delta}\right)\int_{k\Delta}^{(k+1)\Delta}{(k+1)\Delta-s)}\sigma^{\prime}\left(X^{1}_{s}\right)\sigma^{2}\left(X^{1}_{s}\right)dW_s}\right)^2\right]\\
    =&~4\mathbb{E}\left[\left(\int_{0}^{1}{\phi_{\ell}\left(X^{1}_{\eta(s)}\right)(\eta(s)+\Delta-s)\sigma^{\prime}\left(X^{1}_{s}\right)\sigma^{2}\left(X^{1}_{s}\right)dW_s}\right)^2\right]\\
    \leq &~C\sigma^{4}_{1}\Delta^{2}\mathbb{E}\left[\int_{0}^{1}{\phi^{2}_{\ell}\left(X^{1}_{\eta(s)}\right)ds}\right]
\end{align*}
where $C>0$ is a constant. We deduce for both the spline basis and any orthonormal basis that there exists a constant $C>0$ depending on $\sigma_{1}$ such that:
\begin{equation}
    \label{eq:upper bound of nu - case i=2}
    \mathbb{E}\left(\underset{h\in\mathcal{S}_{m}, \ \|h\|^{2}_{X}=1}{\sup}{\nu^{2}_{2}(h)}\right)\leq \textcolor{black}{C\frac{m^{\alpha^{\prime}}}{n^2}}.
\end{equation}

\item Case $i=3$

We have $\zeta^{1,3}_{k\Delta}=2b\left(X^{1}_{k\Delta}\right)\int_{k\Delta}^{(k+1)\Delta}{\sigma\left(X^{1}_{s}\right)dW_s}$ and,
\begin{align*}
    \mathbb{E}\left[\nu^{2}_{3}\left(\phi_{\ell}\right)\right] = &~\frac{4}{n^{2}}\mathbb{E}\left[\left(\int_{0}^{1}{\phi_{\ell}\left(X^{1}_{\eta(s)}\right)b\left(X^{1}_{\eta(s)}\right)\sigma\left(X^{1}_{s}\right)dW_s}\right)^2\right]\\
    \leq &~ \frac{4\sigma^{2}_{1}}{n^2}\mathbb{E}\left[\int_{0}^{1}{\phi^{2}_{\ell}\left(X^{1}_{\eta(s)}\right)b^{2}\left(X^{1}_{\eta(s)}\right)ds}\right]
\end{align*}
Since for all $x\in\mathbb{R}, \ b^{2}(x)\leq C_{0}(1+x^2)$ and $\underset{t\in[0,1]}{\sup}{\mathbb{E}\left(|X_t|^{2}\right)}<\infty$, there exists a constant $C>0$ depending on $\sigma_{1}$ such that:
\begin{equation}
    \label{eq:upper bound of nu - case i=3}
    \mathbb{E}\left(\underset{h\in\mathcal{S}_{m}, \ \|h\|^{2}_{X}=1}{\sup}{\nu^{2}_{3}(h)}\right)\leq \textcolor{black}{C\frac{m^{\alpha^{\prime}}}{n^2}}.
\end{equation}
\end{enumerate}
We finally obtain from Equations~\eqref{eq:upper bound of nu - case i=1}, \eqref{eq:upper bound of nu - case i=2} and \eqref{eq:upper bound of nu - case i=3} that there exists a constant $C>0$ depending on $\sigma_{1}$ such that:
\begin{equation}
    \label{eq:upper bound of nu - all cases}
    \mathbb{E}\left(\underset{h\in\mathcal{S}_{m}, \ \|h\|^{2}_{X}=1}{\sup}{\nu^{2}(h)}\right)\leq \textcolor{black}{C\frac{m^{\alpha^{\prime}}}{n}}.
\end{equation}
We deduce from Equations~\eqref{eq:equation3-proof1}~and~\eqref{eq:upper bound of nu - all cases} that there exists a constant $C>0$ depending on $\sigma_1$ such that,
   \begin{equation*}
       \E\left[\|\w{\sigma}^{2}_{m} - \sigma^{2}_{|I}\|^{2}_{n,1}\one_{\Omega_{n,m}}\right] \leq 3\underset{h\in\mathcal{S}_{m,L}}{\inf}{\|\sigma^{2}_{|I} - h\|^{2}_{n}} + C\left(\textcolor{black}{\frac{m^{\alpha^{\prime}}}{n}} + \Delta^{2}\right).
   \end{equation*}
 For $n$ large enough, we have $\|\w{\sigma}^{2}_{m} - \sigma^{2}_{|I}\|^{2}_{\infty} \leq 2mL$ since $\|\w{\sigma}^{2}_{m}\|_{\infty}\leq\sqrt{mL}$. Then, from Lemma~\ref{lm:Proba-OmegaComp-OnePath}~and for all $m\in\mathcal{M}$, there exists a constant $C>0$ depending on $\sigma_1$ such that
 \begin{align*}
     \mathbb{E}\left[\left\|\widehat{\sigma}^{2}_{m}-\sigma^{2}_{|I}\right\|^{2}_{n,1}\right]&=\mathbb{E}\left[\left\|\widehat{\sigma}^{2}_{m}-\sigma^{2}_{|I}\right\|^{2}_{n,1}\one_{\Omega_{n,m}}\right]+\mathbb{E}\left[\left\|\widehat{\sigma}^{2}_{m}-\sigma^{2}_{|I}\right\|^{2}_{n,1}\one_{\Omega^{c}_{n,m}}\right]\\
	&\leq\mathbb{E}\left[\left\|\widehat{\sigma}^{2}_{m}-\sigma^{2}_{|I}\right\|^{2}_{n,1}\one_{\Omega_{n,m}}\right]+2mL\mathbb{P}\left(\Omega^{c}_{n,m}\right)\\
      &\leq 3\underset{h\in\mathcal{S}_{m,L}}{\inf}{\|\sigma^{2}_{|I} - h\|^{2}_{n}} + C\left(\textcolor{black}{\frac{m^{\alpha^{\prime}}}{n} + \frac{m^{\alpha\gamma+1}L}{n^{\gamma/2}}} + \Delta^{2}\right).  
\end{align*}
Since the pseudo-norms $\|.\|_{n,1}$ and $\|.\|_{X}$ are equivalent on the event $\Omega_{n,m}$, then, using Lemma~\ref{lm:Proba-OmegaComp-OnePath}, there exists a constant $C>0$ depending on $\sigma_1$ such that
\begin{align*}
    \mathbb{E}\left[\left\|\widehat{\sigma}^{2}_{m}-\sigma^{2}_{|I}\right\|^{2}_{X}\right] = & \mathbb{E}\left[\left\|\widehat{\sigma}^{2}_{m}-\sigma^{2}_{|I}\right\|^{2}_{X}\one_{\Omega_{n,m}}\right] + \mathbb{E}\left[\left\|\widehat{\sigma}^{2}_{m}-\sigma^{2}_{|I}\right\|^{2}_{X}\one_{\Omega^{c}_{n,m}}\right]\\
    \leq & 8\mathbb{E}\left[\left\|\widehat{\sigma}^{2}_{m}-\sigma^{2}_{|I}\right\|^{2}_{n,1}\right] + 10\underset{h\in\mathcal{S}_{m}}{\inf}{\left\|\sigma^{2}_{|I}-h\right\|^{2}_{n}} + 2mL\P\left(\Omega^{c}_{n,m}\right) \\
    \leq & 34\underset{h\in\mathcal{S}_{m,L}}{\inf}{\left\|h-\sigma^{2}_{|I}\right\|^{2}_{n}}+C\left(\textcolor{black}{\frac{m^{\alpha^{\prime}}}{n} + \frac{m^{\alpha\gamma+1}L}{n^{\gamma/2}}} +\Delta^{2}\right).
\end{align*}
Finally, since the estimator $\w{\sigma}^{2}_{m}$ is built from a diffusion path $\bar{X}^{1}$ independent of the diffusion process $X$, and from Equations~\eqref{eq:Equiv-NormX-NormL2}~and~\eqref{eq:equivalence between the L2 norm and the empirical norm}, the pseudo-norm $\|.\|_X$ depending on the process $X$ and the empirical norm $\|.\|_n$ are equivalent ($\forall~h\in \mathbb{L}^{2}(I), ~ \|h\|^{2}_{n}\leq (\tau_1/\tau_0)\E\left[\|h\|^{2}_{X}\right]$), there exists a constant $C>0$ depending on $\sigma_1, \tau_0$ and $\tau_1$ such that
\begin{align*}
    \mathbb{E}\left[\left\|\widehat{\sigma}^{2}_{m}-\sigma^{2}_{|I}\right\|^{2}_{n}\right] \leq & \frac{34\tau_1}{\tau_0}\underset{h\in\mathcal{S}_{m,L}}{\inf}{\left\|h-\sigma^{2}_{|I}\right\|^{2}_{n}}+C\left(\textcolor{black}{\frac{m^{\alpha^{\prime}}}{n} + \frac{m^{\alpha\gamma+1}L}{n^{\gamma/2}}} +\Delta^{2}\right).
\end{align*}
\end{proof}

\subsubsection{Proof of Theorem~\ref{thm:ConstFromOnePath}}

\begin{proof}
Since $L=\log^{2}(n)$. 
From Equation~\eqref{eq:equation2-proof1} (Proof of Theorem~\ref{thm:RiskBound-OnePath}), for all $h\in\mathcal{S}_{m,L}$,
\textcolor{black}{
\begin{equation}
\label{eq:FirstRistBound-OnePath}
    \mathbb{E}\left[\left\|(\widehat{\sigma}^{2}_{m,L}-\sigma_{|\mathcal{I}}^{2})\right\|^{2}_{n,1}\right] \leq\underset{h\in\mathcal{S}_{m,L}}{\inf}{\left\|h-\sigma_{|\mathcal{I}}^{2}\right\|^{2}_{n}} + 2\sum_{i=1}^{3}{\mathbb{E}\left[\nu_i\left(\widehat{\sigma}^{2}_{m}-h\right)\right]} + 2\mathbb{E}\left[\mu\left(\widehat{\sigma}^{2}_{m}-h\right)\right]
\end{equation}
}
where $\nu_i, ~ i=1,2,3$ and $\mu$ are given in Equation~\eqref{eq: nu and mu}. For all $i\in\{1,2,3\}$ and for all $h\in\mathcal{S}_{m,L}$, one has
\begin{equation}
\label{eq:upper-bound nu.i}
    \mathbb{E}\left[\nu_i\left(\widehat{\sigma}^{2}_{m,L}-h\right)\right]\leq\sqrt{2m\log^{2}(n)}\sqrt{\sum_{\ell=0}^{m-1}{\mathbb{E}\left[\nu^{2}_{i}(\phi_{\ell})\right]}}.
\end{equation}
\begin{enumerate}
    \item Upper bound of $\sum_{\ell=0}^{m-1}{\mathbb{E}\left[\nu^{2}_{1}(\phi_{\ell})\right]}$
    
    According to Equation~\eqref{eq: nu and mu}, we have
    \begin{equation*}
        \forall \ell\in[\![0,m-1]\!], \ \nu_1(\phi_{\ell})=\frac{1}{n}\sum_{k=0}^{n-1}{\phi_{\ell}(X^{1}_{k\Delta})\zeta^{1,1}_{k\Delta}}
    \end{equation*}
    where $\zeta^{1,1}_{k\Delta}=\frac{1}{\Delta}\left[\left(\int_{k\Delta}^{(k+1)\Delta}{\sigma(X^{1}_{s})dW_s}\right)^2-\int_{k\Delta}^{(k+1)\Delta}{\sigma^{2}(X^{1}_{s})ds}\right]$ is a martingale satisfying
    \begin{equation*}
        \mathbb{E}\left[\zeta^{1,1}_{k\Delta}|\mathcal{F}_{k\Delta}\right]=0 \ \ \mathrm{and} \ \ \mathbb{E}\left[\left(\zeta^{1,1}_{k\Delta}\right)^2|\mathcal{F}_{k\Delta}\right]\leq\frac{1}{\Delta^2}\mathbb{E}\left[\left(\int_{k\Delta}^{(k+1)\Delta}{\sigma^{2}(X^{1}_{s})ds}\right)^2\right]\leq C\sigma^{4}_{1}
    \end{equation*}
    with $C>0$ a constant, $W=W^{1}$ and $(\mathcal{F}_t)_{t\geq 0}$ the natural filtration of the martingale $(M_t)_{t\in[0,1]}$ given for all $t\in[0,1]$ by $M_{t}=\int_{0}^{t}{\sigma(X^{1}_s)dW_s}$. We derive that
    \begin{align*}
        \sum_{\ell=0}^{m-1}{\mathbb{E}\left[\nu^{2}_{1}(\phi_{\ell})\right]}=&\frac{1}{n^2}\sum_{\ell=0}^{m-1}{\mathbb{E}\left[\left(\sum_{k=0}^{n-1}{\phi_{\ell}(X^{1}_{k\Delta})\zeta^{1,1}_{k\Delta}}\right)^2\right]}=\frac{1}{n^2}\mathbb{E}\left[\sum_{k=0}^{n-1}{\sum_{\ell=0}^{m-1}{\phi^{2}_{\ell}(X^{1}_{k\Delta})\left(\zeta^{1,1}_{k\Delta}\right)^2}}\right]
    \end{align*}
    since for all integers $k, k^{\prime}$ such that $k > k^{\prime} \geq 0$, we have 
    \begin{align*}
        \E\left[\phi_{\ell}(X^{1}_{k\Delta})\zeta^{1,1}_{k\Delta}\phi_{\ell}(X^{1}_{k^{\prime}\Delta})\zeta^{1,1}_{k^{\prime}\Delta}|\mathcal{F}_{k\Delta}\right] = \phi_{\ell}(X^{1}_{k\Delta})\zeta^{1,1}_{k^{\prime}\Delta}\phi_{\ell}(X^{1}_{k^{\prime}\Delta})\E\left[\zeta^{1,1}_{k\Delta}|\mathcal{F}_{k\Delta}\right] = 0.
    \end{align*}
    For each $k\in[\![0,n-1]\!]$, we have 
    $$\begin{cases} 
    \sum_{\ell=0}^{m-1}{\phi_{\ell}(X^{1}_{k\Delta})} = \sum_{\ell=-M}^{K-1}{B_{\ell}(X^{1}_{k\Delta})} =1 ~~ \mathrm{for ~ the ~ spline ~ basis} \\ \\
    \sum_{\ell=0}^{m-1}{\phi_{\ell}(X^{1}_{k\Delta})} \leq Cm ~~ \mathrm{For ~ an ~ orthonormal ~ basis ~ with} ~ C=\underset{0 \leq\ell\leq m-1}{\max}{\|\phi_{\ell}\|_{\infty}}.
    \end{cases}
    $$ 
    Finally, there exists a constant $C>0$ such that
    \begin{align*}
        \sum_{\ell=0}^{m-1}{\mathbb{E}\left[\nu^{2}_{1}(\phi_{\ell})\right]} \leq \begin{cases}
        \frac{C}{n} ~~ \mathrm{for ~ the ~ spline ~ basis} \\ \\
        C\frac{m}{n} ~~ \mathrm{for ~ an ~ orthonormal ~ basis}.
        \end{cases}
    \end{align*}
    
    \item Upper bound of $\sum_{\ell=0}^{m-1}{\mathbb{E}\left[\nu^{2}_{2}(\phi_{\ell})\right]}$
    
    For all $k\in[\![0,n-1]\!]$ and for all $s\in[0,1]$, set $\eta(s)=k\Delta$ if $s\in[k\Delta,(k+1)\Delta)$. We have:
    \begin{align*}
        \sum_{\ell=0}^{m-1}{\mathbb{E}\left[\nu^{2}_{2}(\phi_{\ell})\right]}&=4\sum_{\ell=0}^{m-1}{\mathbb{E}\left[\left(\sum_{k=0}^{n-1}{\int_{k\Delta}^{(k+1)\Delta}{\phi_{\ell}(X^{1}_{k\Delta})((k+1)\Delta-s)\sigma^{\prime}(X^{1}_{s})\sigma^{2}(X^{1}_{s})dW_s}}\right)^2\right]}\\
        &=4\sum_{\ell=0}^{m-1}{\mathbb{E}\left[\left(\int_{0}^{1}{\phi_{\ell}(X^{1}_{\eta(s)})(\eta(s)+\Delta-s)\sigma^{\prime}(X^{1}_{s})\sigma^{2}(X^{1}_{s})dW_s}\right)^2\right]}.
    \end{align*}
    We conclude that
    \begin{align*}
        \sum_{\ell=0}^{m-1}{\mathbb{E}\left[\nu^{2}_{2}(\phi_{\ell})\right]} \leq \begin{cases}
        \frac{C}{n^2} ~~ \mathrm{for ~ the ~ spline ~ basis} \\ \\
        C\frac{m}{n^2} ~~ \mathrm{for ~ an ~ orthonormal ~ basis}.
        \end{cases}
    \end{align*}
     where the constant $C>0$ depends on the diffusion coefficient and the upper bound of the basis functions.
     
    \item Upper bound of $\sum_{\ell=0}^{m-1}{\mathbb{E}\left[\nu^{3}_{2}(\phi_{\ell})\right]}$
    
    We have:
    \begin{align*}
        \sum_{\ell=0}^{m-1}{\mathbb{E}\left[\nu^{2}_{3}(\phi_{\ell})\right]}&=\frac{4}{n^2}\sum_{\ell=0}^{m-1}{\mathbb{E}\left[\left(\sum_{k=0}^{n-1}{\int_{k\Delta}^{(k+1)\Delta}{\phi_{\ell}(X^{1}_{k\Delta})b(X^{1}_{k\Delta})\sigma(X^{1}_{s})dW_s}}\right)^2\right]}\\
        &=\frac{4}{n^2}\sum_{\ell=0}^{m-1}{\mathbb{E}\left[\left(\int_{0}^{1}{\phi_{\ell}(X^{1}_{\eta(s)})b(X^{1}_{\eta(s)})\sigma(X^{1}_{s})dW_s}\right)^2\right]}\\
        &\leq\frac{4}{n^2}\mathbb{E}\left[\int_{0}^{1}{\sum_{\ell=0}^{m-1}{\phi^{2}_{\ell}(X^{1}_{\eta(s)})b^{2}(X^{1}_{\eta(s)})\sigma^{2}(X^{1}_{s})ds}}\right].
    \end{align*}
    Since for all $x\in\mathbb{R}, \ b(x)\leq C_0(1+x^2)$ and $\underset{t\in[0,1]}{\sup}{\mathbb{E}\left(|X_t|^4\right)}<\infty$, there exists a constant $C>0$ depending on the diffusion coefficient such that
    \begin{align*}
        \sum_{\ell=0}^{m-1}{\mathbb{E}\left[\nu^{2}_{3}(\phi_{\ell})\right]} \leq \begin{cases}
        \frac{C}{n^2} ~~ \mathrm{for ~ the ~ spline ~ basis} \\ \\
        C\frac{m}{n^2} ~~ \mathrm{for ~ an ~ orthonormal ~ basis}.
        \end{cases}
    \end{align*}
\end{enumerate}
We finally deduce that from Equations~\eqref{eq:FirstRistBound-OnePath}~and~\eqref{eq:upper-bound nu.i}~ that for all $h\in\mathcal{S}_{m,L}$,
\textcolor{black}{
\begin{equation}
\label{eq:First-UpperBounds-OnePath}
\begin{cases}
    \mathbb{E}\left[\|(\widehat{\sigma}^{2}_{m,L}-\sigma_{|\mathcal{I}}^{2})\|^{2}_{n,1}\right]\leq \underset{h\in\mathcal{S}_{m}}{\inf}{\|h-\sigma_{|\mathcal{I}}^{2}\|^{2}_{n}}+C\sqrt{\frac{m\log^{2}(n)}{n}}+2\mathbb{E}\left[\mu(\widehat{\sigma}^{2}_{m,L}-h)\right] ~~ {\bf [B]} \\\\
    \mathbb{E}\left[\|(\widehat{\sigma}^{2}_{m,L}-\sigma_{|\mathcal{I}}^{2})\|^{2}_{n,1}\right]\leq \underset{h\in\mathcal{S}_{m}}{\inf}{\|h-\sigma_{|\mathcal{I}}^{2}\|^{2}_{n}}+C\sqrt{\frac{m^{2}\log^{2}(n)}{n}}+2\mathbb{E}\left[\mu(\widehat{\sigma}^{2}_{m,L}-h)\right] ~~ {\bf [OB]}
\end{cases}
\end{equation}
}
where $C>0$ is a constant. It remains to obtain an upper bound of the term $\mu(\widehat{\sigma}^{2}_{m,L}-h)$. For all $a>0$ and for all $h\in\mathcal{S}_{m,L}$,
\begin{align*}
    2\mu\left(\widehat{\sigma}^{2}_{m,L}-h\right) \leq & \frac{2}{a}\left\|\widehat{\sigma}^{2}_{m,L}-\sigma_{\textcolor{black}{|\mathcal{I}}}^{2}\right\|^{2}_{n,1}+\frac{2}{a}\left\|h-\sigma_{\textcolor{black}{|\mathcal{I}}}^{2}\right\|^{2}_{n,1}+\frac{a}{n}\sum_{k=0}^{n-1}{\left(R^{1}_{k\Delta}\right)^2}\\
    2\mathbb{E}\left[\mu\left(\widehat{\sigma}^{2}_{m,L}-h\right)\right] \leq & \frac{2}{a}\mathbb{E}\left\|\widehat{\sigma}^{2}_{m,L}-\sigma_{\textcolor{black}{|\mathcal{I}}}^{2}\right\|^{2}_{n,1}+\frac{2}{a}\underset{h\in\mathcal{S}_{m}}{\inf}{\|h-\sigma_{\textcolor{black}{|\mathcal{I}}}^{2}\|^{2}_{n}} +\frac{a}{n}\sum_{k=0}^{n-1}{\mathbb{E}\left[\left(R^{1}_{k\Delta}\right)^2\right]}.
\end{align*}
Using Equations~\eqref{eq:UpperBound-TimeStep},~\eqref{eq:First-UpperBounds-OnePath}~and setting $a=4$, we deduce that there exists constant $C>0$ depending on $\sigma_{1}$ such that,
\textcolor{black}{
\begin{equation}
\label{eq:pseudo-norm-loss-error-OnePath}
   \begin{cases}
    \mathbb{E}\left[\|\widehat{\sigma}^{2}_{m,L}-\sigma_{|\mathcal{I}}^{2}\|^{2}_{n,1}\right]\leq \underset{h\in\mathcal{S}_{m}}{\inf}{\|h-\sigma_{|\mathcal{I}}^{2}\|^{2}_{n}}+C\sqrt{\frac{m\log^{2}(n)}{n}} ~~ {\bf [B]} \\ \\
    \mathbb{E}\left[\|\widehat{\sigma}^{2}_{m,L}-\sigma_{|\mathcal{I}}^{2}\|^{2}_{n,1}\right]\leq \underset{h\in\mathcal{S}_{m}}{\inf}{\|h-\sigma_{|\mathcal{I}}^{2}\|^{2}_{n}}+C\sqrt{\frac{m^2\log^{2}(n)}{n}} ~~ {\bf [OB]}.
   \end{cases}
\end{equation}
}

\textcolor{black}{For the case of bases of compactly supported functions, we proceed to the dilation of the bases so that $\sigma^{2}$ can be estimated on the compact interval $[-\log(n), \log(n)]$. Then, we use the folowing decomposition:
\begin{align*}
    \E\left[\left\|\w{\sigma}^{2}_{m,L} - \sigma^{2}\right\|^{2}_{n,1}\right] \leq &~ \E\left[\left\|(\w{\sigma}^{2}_{m,L} - \sigma^{2})\one_{[-\log(n),\log(n)]}\right\|^{2}_{n,1}\right] + \E\left[\left\|(\w{\sigma}^{2}_{m,L} - \sigma^{2})\one_{[-\log(n),\log(n)]^{c}}\right\|^{2}_{n,1}\right] \\
    \leq &~ \E\left[\left\|(\w{\sigma}^{2}_{m,L} - \sigma^{2})\one_{[-\log(n),\log(n)]}\right\|^{2}_{n,1}\right] + 4\textcolor{black}{L}\underset{t\in[0,1]}{\sup}{\P(|X_t|>\log(n))},
\end{align*}
and we deduce from Equation~\eqref{eq:pseudo-norm-loss-error-OnePath} that
\begin{equation*}
   \begin{cases}
    \mathbb{E}\left[\|(\widehat{\sigma}^{2}_{m,L}-\sigma_{|\mathcal{I}}^{2})\one_{[-\log(n),\log(n)]}\|^{2}_{n,1}\right]\leq \underset{h\in\mathcal{S}_{m}}{\inf}{\|h-\sigma_{|\mathcal{I}}^{2}\|^{2}_{n}}+C\sqrt{\frac{m\log^{2}(n)}{n}} ~~ {\bf [B]} \\ \\
    \mathbb{E}\left[\|(\widehat{\sigma}^{2}_{m,L}-\sigma_{|\mathcal{I}}^{2})\one_{[-\log(n),\log(n)]}\|^{2}_{n,1}\right]\leq \underset{h\in\mathcal{S}_{m}}{\inf}{\|h-\sigma_{|\mathcal{I}}^{2}\|^{2}_{n}}+C\sqrt{\frac{m^2\log^{2}(n)}{n}} ~~ {\bf [OB]},
   \end{cases}
\end{equation*}
and
} 
\begin{equation}
\label{eq:pseudo-norm-loss-error-OnePath-bis}
   \begin{cases}
    \mathbb{E}\left[\|\widehat{\sigma}^{2}_{m,L}-\sigma_{\textcolor{black}{|\mathcal{I}}}^{2}\|^{2}_{n,1}\right]\leq \underset{h\in\mathcal{S}_{m}}{\inf}{\|h-\sigma_{\textcolor{black}{|\mathcal{I}}}^{2}\|^{2}_{n}}+C\sqrt{\frac{m\log^{2}(n)}{n}} + 2\log^{2}(n)\underset{t\in(0,1]}{\sup}{\P(|X_t|>A_n)} ~~ {\bf [B]} \\ \\
    \mathbb{E}\left[\|\widehat{\sigma}^{2}_{m,L}-\sigma_{\textcolor{black}{|\mathcal{I}}}^{2}\|^{2}_{n,1}\right]\leq \underset{h\in\mathcal{S}_{m}}{\inf}{\|h-\sigma_{\textcolor{black}{|\mathcal{I}}}^{2}\|^{2}_{n}}+C\sqrt{\frac{m^2\log^{2}(n)}{n}} + 2\log^{2}(n)\underset{t\in(0,1]}{\sup}{\P(|X_t|>A_n)} ~~ {\textcolor{black}{\bf [OB]}}.
   \end{cases}
\end{equation}
From Proposition~\ref{prop:approx-bis}, $\underset{t\in(0,1]}{\sup}{\P(|X_t|>\log(n))} \leq \log^{-1}(n)\exp(-c\log^{2}(n))$ with $c>0$ a constant. Then, we obtain from Equation~\eqref{eq:pseudo-norm-loss-error-OnePath-bis} that
\begin{equation*}
    \begin{cases}
    \mathbb{E}\left[\|\widehat{\sigma}^{2}_{m,L}-\sigma_{\textcolor{black}{|\mathcal{I}}}^{2}\|^{2}_{n,1}\right]\leq \underset{h\in\mathcal{S}_{m}}{\inf}{\|h-\sigma_{\textcolor{black}{|\mathcal{I}}}^{2}\|^{2}_{n}}+C\sqrt{\frac{m\log^{2}(n)}{n}} ~~ ~~ {\bf [B]} \\ \\
    \mathbb{E}\left[\|\widehat{\sigma}^{2}_{m,L}-\sigma_{\textcolor{black}{|\mathcal{I}}}^{2}\|^{2}_{n,1}\right]\leq \underset{h\in\mathcal{S}_{m}}{\inf}{\|h-\sigma_{\textcolor{black}{|\mathcal{I}}}^{2}\|^{2}_{n}}+C\sqrt{\frac{m^2\log^{2}(n)}{n}} ~~ ~~ {\textcolor{black}{\bf [OB]}}.
   \end{cases}
\end{equation*}
\end{proof}

\subsection{Proof of Section~\ref{sec:Estimation-N.paths}}

The following lemma allows us to obtain a risk bound of $\w{\sigma}^{2}_{m,L}$ defined with the empirical norm $\|.\|_n$ from the risk bound defined from the pseudo norm $\|.\|_{n,N}$.
\begin{lemme}
    \label{lem:Relation-Nn-n}
    Let $\w{\sigma}^{2}_{m,L}$ be the truncated projection estimator on $\R$ of $\sigma^{2}$ over the subspace $\mathcal{S}_{m,L}$. Suppose that $L = \log^{2}(N\textcolor{black}{n}), ~ N>1$. Under Assumption~\ref{ass:Assumption 1}, there exists a constant $C>0$ independent of $m$ and $N$ such that
    $$ \E\left[\left\|\w{\sigma}^{2}_{m,L} - \sigma^{2}\right\|^{2}_{n,N}\right] - 2\E\left[\left\|\w{\sigma}^{2}_{m,L} - \sigma^{2}\right\|^{2}_{n}\right] \leq C \frac{m^2\log^{3}(N\textcolor{black}{n})}{N}.$$
\end{lemme}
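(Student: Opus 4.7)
Set $f := \w{\sigma}^{2}_{m,L} - \sigma^{2}$. Under Assumption~\ref{ass:Assumption 1} and with $L = \log^{2}(N)$, for $N$ large enough the truncation and the uniform bound $\sigma \leq \sigma_{1}$ yield $|f| \leq \sqrt{L} + \sigma_{1}^{2} \leq 2\log(N)$ almost surely. Since the ridge estimator $\w{\sigma}^{2}_{m}$ lies in $\mathcal{S}_{m,L}$ and truncation at $\sqrt{L}$ is $1$-Lipschitz, $\w{\sigma}^{2}_{m,L}$ coincides with the truncation $(\w{\sigma}^{2}_{m})_{L}$ of an element of $\mathcal{S}_{m,L}$. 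Writing $h_{L}$ for the truncation of $h$ at $\sqrt{L}$, the quantity to control is dominated by
\begin{equation*}
\E\!\left[\,\sup_{h \in \mathcal{S}_{m,L}}\bigl(\|h_{L} - \sigma^{2}\|^{2}_{n,N} - 2\|h_{L} - \sigma^{2}\|^{2}_{n}\bigr)_{+}\right],
\end{equation*}
so the problem reduces to a uniform concentration bound over the truncated projection class.

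For a fixed $h \in \mathcal{S}_{m,L}$, the random variables $Y_{j}^{(h)} := n^{-1}\sum_{k=0}^{n-1}(h_{L} - \sigma^{2})^{2}(X^{j}_{k\Delta})$, $j=1,\dots,N$, are i.i.d.\ (by i.i.d.\ of the paths), lie in $[0, 4\log^{2}(N)]$, have mean $\mu_{h} := \|h_{L} - \sigma^{2}\|^{2}_{n}$, and variance at most $4\log^{2}(N)\,\mu_{h}$ (using $\mathrm{Var}(Y_{j}^{(h)})\leq \|Y_{j}^{(h)}\|_{\infty}\E[Y_{j}^{(h)}]$). Bernstein's inequality evaluated at the level $\mu_{h}+\delta$ then yields, for some $c_{0}>0$,
\begin{equation*}
\P\!\left(\|h_{L} - \sigma^{2}\|^{2}_{n,N} > 2\mu_{h} + \delta\right) \;\leq\; \exp\!\left(-\frac{c_{0}\,N\delta}{\log^{2}(N)}\right).
\end{equation*}

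I will upgrade this to a uniform bound via an $\eta$-net of the coefficient ball $\{\mathbf{a}\in\R^{m}:\|\mathbf{a}\|_{2}\leq\sqrt{mL}\}$, of cardinality at most $(1+2\sqrt{mL}/\eta)^{m}$. For both collections $[\mathbf{B}]$ and $[\mathbf{H}]$, Cauchy--Schwarz and $\sup_{x}\sum_{\ell}\phi_{\ell}^{2}(x)\leq C_{0}m$ give $\|h-h'\|_{\infty}\leq \sqrt{C_{0}m}\,\|\mathbf{a}-\mathbf{a}'\|_{2}$; combined with the $1$-Lipschitz nature of truncation and $|h_{L}-\sigma^{2}|\vee|h'_{L}-\sigma^{2}|\leq 2\log(N)$, the map $h \mapsto (h_{L}-\sigma^{2})^{2}$ is $C_{1}\sqrt{m}\log(N)$-Lipschitz in sup-norm. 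Choosing $\eta := \delta/(12C_{1}\sqrt{m}\log(N))$ makes the transfer from a center $h_{i}$ to its neighbours cost at most $\delta$ on either of the two norms, and a union bound over the centers gives
\begin{equation*}
\P\!\left(\sup_{h\in\mathcal{S}_{m,L}}\bigl(\|h_{L}-\sigma^{2}\|^{2}_{n,N} - 2\|h_{L}-\sigma^{2}\|^{2}_{n}\bigr) > 3\delta\right) \;\leq\; \bigl(1 + c_{2}\,m\log(N)/\delta\bigr)^{m}\exp\!\left(-\frac{c_{0}N\delta}{\log^{2}(N)}\right).
\end{equation*}

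The argument closes by choosing $\delta := C'\,m^{2}\log^{3}(N)/N$ for $C'$ large enough. Then $\log\bigl(1+c_{2}m\log(N)/\delta\bigr) = O(\log N)$, so the log of the covering number is $O(m\log N)$, while $c_{0}N\delta/\log^{2}(N) = c_{0}C'\,m^{2}\log(N)$; hence the failure probability is dominated by $N^{-m}$. Splitting the expectation over this good event and its complement---on which the integrand is almost surely at most $4\log^{2}(N)$---yields
\begin{equation*}
\E\!\left[\|f\|^{2}_{n,N} - 2\|f\|^{2}_{n}\right] \;\leq\; 3\delta + 4\log^{2}(N)\cdot N^{-m} \;\leq\; C\,\frac{m^{2}\log^{3}(N)}{N},
\end{equation*}
as claimed. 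The main technical obstacle is the bookkeeping of $m$- and $\log(N)$-factors: the variance factor in Bernstein, the $\sqrt{mL}$-diameter of the coefficient ball, and the $\sqrt{m}\log(N)$-Lipschitz constant must all fit together to land exactly on $m^{2}\log^{3}(N)/N$. Using Bernstein rather than Hoeffding is essential here---only the variance bound $v_{h}\leq 4\log^{2}(N)\mu_{h}$ delivers a $\log^{-2}(N)$ factor in the exponent, without which the covering argument would not close at this rate.
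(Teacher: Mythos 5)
Your argument is correct and follows essentially the same route as the proof the paper defers to (\cite{denis2020ridge}, Theorem 3.3), namely the independence of the $N$ paths combined with a Bernstein inequality for the per-path averages $Y_j^{(h)}$ (variance controlled by $\|f\|_\infty^2\,\mu_h$) and a covering argument over the constrained coefficient ball. The only quibble is a harmless bookkeeping slip: with net radius $\eta=\delta/(12C_1\sqrt{m}\log N)$ and ball radius $\sqrt{mL}=\sqrt{m}\log N$, the covering number should read $\bigl(1+c_2 m\log^2(N)/\delta\bigr)^m$ rather than $\bigl(1+c_2 m\log(N)/\delta\bigr)^m$, but its logarithm is still $O(m\log N)$ for your choice of $\delta$, so the final rate $m^2\log^3(N)/N$ is unaffected.
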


The proof of Lemma~\ref{lem:Relation-Nn-n} is provided in \cite{denis2020ridge}, \textit{Theorem 3.3}. The proof uses the independence of the copies $\bar{X}^{1},\ldots,\bar{X}^{N}$ of the process $X$ at discrete times, and the Bernstein inequality.

\subsubsection{Proof of Theorem~\ref{thm:RiskBound-CompactSupport}~}

For fixed $n$ and $N$ in $\mathbb{N}^{*}$, we set for all $m\in\mathcal{M}$,
\begin{equation}
    \label{equivalence-set}
    \Omega_{n,N,m}:=\underset{h\in\mathcal{S}_{m}\setminus\{0\}}{\bigcap}{\left\{\left|\frac{\|h\|^{2}_{n,N}}{\|h\|^{2}_{n}}-1\right|\leq\frac{1}{2}\right\}}.
\end{equation}

As we can see, the empirical norms $\|h\|_{n,N}$ and $\|h\|_n$ of any function $h\in\mathcal{S}_{m}\setminus\{0\}$ are equivalent on $\Omega_{n,N,m}$. More precisely, on the set $\Omega_{n,N,m}$, for all $h\in\mathcal{S}_{m}\setminus\{0\}$, we have : $\frac{1}{2}\|h\|^{2}_{n}\leq\|h\|^{2}_{n,N}\leq\frac{3}{2}\|h\|^{2}_{n}$. We have the following result:

\begin{lemme}
\label{lm:Proba-OmegaComp}
Under Assumption~\ref{ass:Assumption 1}, the following holds:
\begin{itemize}
    \item If $n \geq N$ or $n \propto N$, then $m \in \textcolor{black}{\mathcal{M} = \left\{1,\ldots,\left\lfloor\sqrt{N}/\log(Nn)\right\rfloor\right\}}$ and,
    \begin{align*}
      \mathbb{P}\left(\Omega^{c}_{n,N,m}\right)\leq 2\exp(-C\sqrt{N}).
    \end{align*}
\item If $n \leq N$, then $m \in \textcolor{black}{\mathcal{M}= \left\{1,\ldots,\left\lfloor\sqrt{n}/\log(Nn)\right\rfloor\right\}}$ and
     \begin{align*}
      \mathbb{P}\left(\Omega^{c}_{n,N,m}\right) \leq 2\exp(-C\sqrt{n})
    \end{align*}
\end{itemize}

where $C>0$ is a constant.
\end{lemme}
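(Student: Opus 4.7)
The plan is to view $\Omega_{n,N,m}^c$ as a spectral event for the empirical Gram matrix $\widehat{\Psi}_m$ and then apply a matrix Bernstein concentration inequality. Writing any nonzero $h=\sum_\ell a_\ell\phi_\ell\in\mathcal{S}_m$ through its coordinate vector $\mathbf{a}$, one has $\|h\|_{n,N}^2={}^t\mathbf{a}\widehat{\Psi}_m\mathbf{a}$ and $\|h\|_n^2={}^t\mathbf{a}\Psi_m\mathbf{a}$. Since $\Psi_m$ is invertible on the compact interval $I$---because $f_n\geq\tau_0$ by Equation~\eqref{eq:lower-bound of density function f_n} and the $\phi_\ell$ are linearly independent---the change of variable $\mathbf{a}=\Psi_m^{-1/2}\mathbf{b}$ lets one rewrite
\[
\Omega_{n,N,m}^c=\left\{\left\|\Psi_m^{-1/2}(\widehat{\Psi}_m-\Psi_m)\Psi_m^{-1/2}\right\|_{\mathrm{op}}>1/2\right\}.
\]

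I would then decompose $\widehat{\Psi}_m=N^{-1}\sum_{j=1}^N Z_j$ with $Z_j:=n^{-1}\sum_{k=0}^{n-1}\phi(X^j_{k\Delta})\,{}^t\phi(X^j_{k\Delta})$ and $\phi(x)={}^t(\phi_0(x),\ldots,\phi_{m-1}(x))$. The symmetric matrices $V_j:=\Psi_m^{-1/2}(Z_j-\Psi_m)\Psi_m^{-1/2}$ are i.i.d.\ and centered, and using the pointwise bound $\phi(x)^{T}\Psi_m^{-1}\phi(x)\leq\mathcal{L}(m)\|\Psi_m^{-1}\|_{\mathrm{op}}$ together with the Loewner domination $\mathbb{E}[V_j^2]\preceq\Psi_m^{-1/2}\mathbb{E}[Z_j\Psi_m^{-1}Z_j]\Psi_m^{-1/2}$, one obtains the deterministic estimates
\[
\|V_j\|_{\mathrm{op}}\leq 1+\mathcal{L}(m)\,\|\Psi_m^{-1}\|_{\mathrm{op}}=:B_m,\qquad \left\|\sum_{j=1}^N \mathbb{E}[V_j^2]\right\|_{\mathrm{op}}\leq N\,B_m,
\]
where $\mathcal{L}(m)=\sup_{x\in I}\sum_\ell\phi_\ell^2(x)$.

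Tropp's matrix Bernstein inequality applied to $\sum_j V_j$ at level $t=N/2$ then yields
\[
\mathbb{P}(\Omega_{n,N,m}^c)\leq 2m\,\exp\!\left(-\frac{cN}{B_m}\right).
\]
For both the Fourier and the spline basis on the compact interval $I$ one has $B_m\leq C m$: for splines, $\mathcal{L}(m)\leq 1$ and $\|\Psi_m^{-1}\|_{\mathrm{op}}\leq Cm/\tau_0$ (via the $L^2$-stability of B-splines), while for Fourier, $\mathcal{L}(m)\leq Cm$ and $\|\Psi_m^{-1}\|_{\mathrm{op}}\leq 1/\tau_0$ by Parseval combined with Equation~\eqref{eq:equivalence between the L2 norm and the empirical norm}. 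Plugging the restriction on the dimension---$m\leq\sqrt{N}/\log(Nn)$ in the first regime, $m\leq\sqrt{n}/\log(Nn)$ in the second---makes the exponent at least of order $\sqrt{\min(n,N)}\log(Nn)$, which absorbs the prefactor $2m$ and gives respectively $2\exp(-C\sqrt{N})$ and $2\exp(-C\sqrt{n})$.

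The main obstacle will be the quantitative control of $\|\Psi_m^{-1}\|_{\mathrm{op}}$: it is immediate for the orthonormal Fourier basis via $f_n\geq\tau_0$, but more delicate for the non-orthonormal B-spline basis, where it relies on the stability estimates established in~\cite{denis2020ridge} and~\cite{denis2022nonparametric}. A secondary technical point is the passage from the scalar matrix Bernstein bound to the uniform spectral statement over $\mathcal{S}_m\setminus\{0\}$, which is handled by the reduction to the operator norm performed in the first paragraph and does not require any additional chaining argument.
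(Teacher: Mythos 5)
Your proof is correct, but it takes a genuinely different route from the paper's. The paper works with the scalar quantity $\sup_{h\in\mathcal{H}_m}\bigl|\|h\|_{n,N}^2-1\bigr|$ over the unit sphere $\mathcal{H}_m=\{h\in\mathcal{S}_m:\|h\|_n=1\}$: it introduces a single $\varepsilon$-net of $\mathcal{H}_m$ for the supremum norm, controls the discretization error through the Lipschitz-in-$h$ bound $\bigl|\|h\|_{n,N}^2-\|h_\varepsilon\|_{n,N}^2\bigr|\leq(\|h\|_\infty+\|h_\varepsilon\|_\infty)\varepsilon\lesssim\sqrt{m}\,\varepsilon$, applies a Hoeffding-type inequality to the $N$ i.i.d.\ path averages for each fixed element of the net, and pays a covering-number factor $\mathcal{N}_\infty(\varepsilon,\mathcal{H}_m)\leq(\kappa\sqrt{m}/\varepsilon)^m\leq N^{m^*}$ with $\varepsilon\propto\sqrt{m^*}/N$; the restriction $m\leq\sqrt{\min(n,N)}/\log(Nn)$ is exactly what absorbs this factor into $\exp(-cN/m^*)$. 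You instead pass to the operator-norm event $\|\Psi_m^{-1/2}\widehat{\Psi}_m\Psi_m^{-1/2}-\mathrm{Id}\|_{\mathrm{op}}>1/2$ and apply matrix Bernstein to the $N$ i.i.d.\ per-path Gram matrices, which is in fact the strategy the paper reserves for the companion Lemma~\ref{lm:proba-complementary-omega} in the appendix, following \cite{denis2022nonparametric} and \cite{comte2020nonparametric}. Your deterministic bounds $\|V_j\|_{\mathrm{op}}\leq 1+\mathcal{L}(m)\|\Psi_m^{-1}\|_{\mathrm{op}}$ and the variance domination are sound, and on a compact interval the estimate $\mathcal{L}(m)\bigl(\|\Psi_m^{-1}\|_{\mathrm{op}}\vee 1\bigr)\leq Cm$ does hold for both collections (spline stability plus $f_n\geq\tau_0$ for $[\mathbf{B}]$, orthonormality plus $f_n\geq\tau_0$ for $[\mathbf{F}]$), so $2m\exp(-cN/m)\leq 2\exp(-C\sqrt{\min(n,N)})$ under the stated choice of $\mathcal{M}$, recovering both cases, including the $n\leq N$ case since $N/m^*\geq\sqrt{n}\log(Nn)$ there. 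What your route buys is the elimination of covering numbers at the price of a quantitative lower bound on the smallest eigenvalue of $\Psi_m$; that is the delicate, basis-specific ingredient, available here precisely because $I$ is compact, whereas the paper's net-plus-Hoeffding argument only needs the cruder sup-norm control $\|h\|_\infty\lesssim\sqrt{m}$ on the unit sphere and therefore transfers more easily to settings where $\|\Psi_m^{-1}\|_{\mathrm{op}}$ is hard to quantify.
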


\textcolor{black}{The proof of Lemma~\ref{lm:Proba-OmegaComp} is provided in appendix.}

\begin{proof}[\textbf{Proof of Theorem}~\ref{thm:RiskBound-CompactSupport}~]
The proof of Theorem~\ref{thm:RiskBound-CompactSupport} extends the proof of Theorem~\ref{thm:RiskBound-OnePath} when $N$ tends to infinity. Then, we deduce from Equation~\eqref{eq:equation3-proof1} that
\begin{equation}
\label{eq:upper-bound-Nn-1}
  \mathbb{E}\left[\left\|\widehat{\sigma}^{2}_{m}-\sigma^{2}_{|I}\right\|^{2}_{n,N}\one_{\Omega_{n,N,m}}\right]\leq 3\underset{h\in\mathcal{S}_{m,L}}{\inf}{\left\|h-\sigma^{2}_{|I}\right\|^{2}_{n}}+C\mathbb{E}\left(\underset{h\in\mathcal{S}_{m}, \|h\|_{n}=1}{\sup}{\nu^{2}(h)}\right)+C\Delta^2
\end{equation}
where $C>0$ is a constant depending on $\sigma_1$, and $\nu = \nu_1+\nu_2+\nu_3$ with
\begin{equation*}
    \nu_i(h) = \frac{1}{Nn}\sum_{j=1}^{N}{\sum_{k=0}^{n-1}{h(X^{j}_{k\Delta})\zeta^{j,i}_{k\Delta}}}, ~~ i=1,2,3
\end{equation*}
and the $\zeta^{j,i}_{k\Delta}$'s are the error terms depending on each path $X^{j},~ j=1,\ldots,N$.

\subsection*{Upper bound of $\mathbb{E}\left(\underset{h\in\mathcal{S}_{m}, \ \|h\|_{n}=1}{\sup}{\nu^{2}(h)}\right)$}

For all $h=\sum_{\ell=0}^{m-1}{a_{\ell}\phi_{\ell}}\in\mathcal{S}_{m}$ such that $\|h\|_n=1$, we have $\|h\|^{2}\leq\frac{1}{\tau_0}$ and the coordinate vector $a=\left(a_{0},\cdots,a_{m-1}\right)$ satisfies:
\begin{itemize}
    \item $\|a\|^{2}_{2}\leq CK \leq Cm$ for the spline basis (see \cite{denis2020ridge}, Lemma 2.6)
    \item $\|a\|^{2}_{2} \leq 1/\tau_0$ for an orthonormal basis since $\|h\|^2 = \|a\|^{2}_{2}$.
\end{itemize}
Furthermore, using the \textcolor{black}{Cauchy$-$Schwarz} inequality, we have:
\begin{align*}
    \nu^{2}(h)=\left(\sum_{\ell=0}^{m-1}{a_{\ell}\nu\left(\phi_{\ell}\right)}\right)^2\leq\|a\|^{2}_{2}\sum_{\ell=0}^{m-1}{\nu^{2}\left(\phi_{\ell}\right)}.
\end{align*}
Thus, for all $\ell\in[\![0,m-1]\!], \ \ \nu=\nu_1+\nu_2+\nu_3$ and for all $i\in\{1,2,3\}$
\begin{align*}
    \mathbb{E}\left[\nu^{2}_{i}\left(\phi_{\ell}\right)\right]=&~\frac{1}{Nn^2}\mathbb{E}\left[\left(\sum_{k=0}^{n-1}{\phi_{\ell}\left(X^{1}_{k\Delta}\right)\zeta^{1,i}_{k\Delta}}\right)^2\right].
\end{align*}
We finally deduce from ~\eqref{eq:upper bound of nu - case i=1},~\eqref{eq:upper bound of nu - case i=2}~and~\eqref{eq:upper bound of nu - case i=3} that there exists a constant $C>0$ depending on $\sigma_{1}$ such that:
\begin{equation}
    \label{eq:upper bound of nu- Nn - all cases}
    \mathbb{E}\left(\underset{h\in\mathcal{S}_{m}, \ \|h\|_{n}=1}{\sup}{\nu^{2}(h)}\right)\leq C\frac{m}{Nn}.
\end{equation}
We deduce from ~\eqref{eq:upper-bound-Nn-1}~and~\eqref{eq:upper bound of nu- Nn - all cases} that there exists a constant $C>0$ such that,
\begin{equation}
    \label{eq:UpperBound-MSE-Omega}
    \mathbb{E}\left[\left\|\widehat{\sigma}^{2}_{m}-\sigma^{2}_{|I}\right\|^{2}_{n,N}\one_{\Omega_{n,N,m}}\right]\leq 3\underset{h\in\mathcal{S}_{m,L}}{\inf}{\left\|\sigma^{2}_{|I}-h\right\|^{2}_{n}}+C\left(\frac{m}{Nn}+\Delta^{2}\right).
\end{equation}
Since we have $\left\|\widehat{\sigma}^{2}_{m}\right\|_{\infty}\leq\sqrt{mL}$, then for $m$ and $L$ large enough, $\left\|\widehat{\sigma}^{2}_{m}-\sigma^{2}_{|I}\right\|^{2}_{\infty}\leq 2mL$.  There exists a constant $C>0$ such that for all $m\in\mathcal{M}$ and for $m$ and $L$ large enough,
\begin{align*}
	\mathbb{E}\left[\left\|\widehat{\sigma}^{2}_{m}-\sigma^{2}_{|I}\right\|^{2}_{n,N}\right] & =\mathbb{E}\left[\left\|\widehat{\sigma}^{2}_{m}-\sigma^{2}_{|I}\right\|^{2}_{n,N}\one_{\Omega_{n,N,m}}\right]+\mathbb{E}\left[\left\|\widehat{\sigma}^{2}_{m}-\sigma^{2}_{|I}\right\|^{2}_{n,N}\one_{\Omega^{c}_{n,N,m}}\right]\\
	&\leq\mathbb{E}\left[\left\|\widehat{\sigma}^{2}_{m}-\sigma^{2}_{|I}\right\|^{2}_{n,N}\one_{\Omega_{n,N,m}}\right]+2mL\P\left(\Omega^{c}_{n,N,m}\right).  
\end{align*}
Then, from Equation~\eqref{eq:UpperBound-MSE-Omega}, Lemma~\ref{lm:Proba-OmegaComp} and for $m \in \mathcal{M} = \left\{1,\ldots,\sqrt{\min(n,N)}/\sqrt{\log(Nn)}\right\}$, we have:
\begin{align*}
       \mathbb{E}\left[\left\|\widehat{\sigma}^{2}_{m}-\sigma^{2}_{|I}\right\|^{2}_{n,N}\right]\leq &~ 3\underset{h\in\mathcal{S}_{m,L}}{\inf}{\left\|h - \sigma^{2}_{|I}\right\|^{2}_{n}}+C\left(\frac{m}{Nn}+mL\exp\left(-C\sqrt{\min(n,N)}\right)+\Delta^{2}\right)
\end{align*}
where $C>0$ is a constant. Recall that the empirical norms $\|.\|_{n,N}$ and $\|.\|_n$ are equivalent on $\Omega_{n,N,m}$, that is  for all $h\in\mathcal{S}_{m}, \ \ \|h\|^{2}_{n}\leq 2\|h\|^{2}_{n,N}$. Thus, we have
\begin{align*}
    \mathbb{E}\left[\left\|\widehat{\sigma}^{2}_{m}-\sigma^{2}_{|I}\right\|^{2}_{n}\right] = &~ \mathbb{E}\left[\left\|\widehat{\sigma}^{2}_{m}-\sigma^{2}_{|I}\right\|^{2}_{n}\one_{\Omega_{n,N,m}}\right] + \mathbb{E}\left[\left\|\widehat{\sigma}^{2}_{m}-\sigma^{2}_{|I}\right\|^{2}_{n}\one_{\Omega^{c}_{n,N,m}}\right]\\
    \leq &~ \mathbb{E}\left[\left\|\widehat{\sigma}^{2}_{m}-\sigma^{2}_{|I}\right\|^{2}_{n}\one_{\Omega_{n,N,m}}\right] + 2mL\P\left(\Omega^{c}_{n,N,m}\right).
\end{align*}
For all $h \in \mathcal{S}_{m,L} \subset \mathcal{S}_{m}$, we have:
\begin{align*}
    \mathbb{E}\left[\left\|\widehat{\sigma}^{2}_{m}-\sigma^{2}_{|I}\right\|^{2}_{n}\one_{\Omega_{n,N,m}}\right] \leq &~ 2\mathbb{E}\left[\left\|\widehat{\sigma}^{2}_{m}-h\right\|^{2}_{n}\one_{\Omega_{n,N,m}}\right] + 2\left\|h-\sigma^{2}_{|I}\right\|^{2}_{n} \\
    \leq &~ 4\mathbb{E}\left[\left\|\widehat{\sigma}^{2}_{m}-h\right\|^{2}_{n,N}\one_{\Omega_{n,N,m}}\right] + 2\left\|h-\sigma^{2}_{|I}\right\|^{2}_{n}\\
    \leq &~ 8\mathbb{E}\left[\left\|\widehat{\sigma}^{2}_{m}-\sigma^{2}_{|I}\right\|^{2}_{n,N}\right] + 10\left\|h-\sigma^{2}_{|I}\right\|^{2}_{n}.
\end{align*}
We finally conclude that
\begin{align*}
    \mathbb{E}\left[\left\|\widehat{\sigma}^{2}_{m}-\sigma^{2}_{|I}\right\|^{2}_{n}\right] \leq & 34\underset{h\in\mathcal{S}_{m,L}}{\inf}{\left\|h-\sigma^{2}_{|I}\right\|^{2}_{n}}+C\left(\frac{m}{Nn}+mL\exp\left(-C\sqrt{\min(n,N)}\right)+\Delta^{2}\right).
\end{align*}
\end{proof}

\subsubsection{Proof of Theorem~\ref{thm:RiskBound-AnySupport}~}

\begin{proof}
\textcolor{black}{For each dimension $m \in \mathcal{M}$,} we have:
\begin{equation*}
    \mathbb{E}\left[\left\|\widehat{\sigma}^{2}_{m,L}-\sigma^{2}\right\|^{2}_{n}\right] = \mathbb{E}\left[\left\|(\widehat{\sigma}^{2}_{m,L}-\sigma^{2})\one_{[-\log(N),\log(N)]}\right\|^{2}_{n}\right] + \mathbb{E}\left[\left\|(\widehat{\sigma}^{2}_{m,L}-\sigma^{2})\one_{[-\log(N),\log(N)]^{c}}\right\|^{2}_{n}\right]
\end{equation*}
and from Proposition~\ref{prop:densityTransition-bis}, Lemma~\ref{lem:controleSortiCompact-bis} and for $N$ large enough, there exists constants $c,C>0$ such that
\begin{align*}
    \mathbb{E}\left[\left\|(\widehat{\sigma}^{2}_{m,L}-\sigma^{2})\one_{[-\log(N),\log(N)]^{c}}\right\|^{2}_{n}\right] \leq & \frac{2L}{n}\sum_{k=0}^{n-1}{\P\left(|X_{k\Delta}| > \log(N)\right)} \leq 2L\underset{t\in[0,1]}{\sup}{\P\left(|X_t|\geq \log(N)\right)} \\
    \leq & \frac{C}{\log(N)}\exp\left(-c\log^{2}(N)\right).
\end{align*}
We deduce that 
\begin{equation}
\label{eq:Semi-Result}
    \mathbb{E}\left[\left\|\widehat{\sigma}^{2}_{m,L}-\sigma^{2}\right\|^{2}_{n}\right] = \mathbb{E}\left[\left\|(\widehat{\sigma}^{2}_{m,L}-\sigma^{2})\one_{[-\log(N),\log(N)]}\right\|^{2}_{n}\right] + \frac{C}{\log(N)}\exp\left(-c\log^{2}(N)\right).
\end{equation}
It remains to upper-bound the first term on the right hand side of Equation~\eqref{eq:Semi-Result}. 

\paragraph{Upper bound of $\mathbb{E}\left[\left\|\widehat{\sigma}^{2}_{m,L}-\sigma^{2}\right\|^{2}_{n}\one_{[-\log(N),\log(N)]}\right]$.}

For all $h\in\mathcal{S}_{m,L}$, we obtain from Equation~\eqref{eq:non adaptive estimator},
\begin{equation}
\label{eq:property-least squares contrast-sigma}
    \gamma_{n,N}(\widehat{\sigma}^{2}_{m,L})-\gamma_{n,N}(\sigma^{2})\leq\gamma_{n,N}(h)-\gamma_{n,N}(\sigma^{2}).
\end{equation}
For all $h\in\mathcal{S}_{m,L}$,
\begin{equation*}
    \gamma_{n,N}(h)-\gamma_{n,N}(\sigma^{2})=\left\|h-\sigma^{2}\right\|^{2}_{n,N}+2\nu_1(\sigma^{2}-h)+2\nu_2(\sigma^{2}-h)+2\nu_3(\sigma^{2}-h)+2\mu(\sigma^{2}-h)
\end{equation*}
where
\begin{equation}
\label{eq:functions nu1 nu2 nu2 and mu}
    \nu_i(h)=\frac{1}{nN}\sum_{j=1}^{N}{\sum_{k=0}^{n-1}{h(X^{j}_{k\Delta})\zeta^{j,i}_{k\Delta}}}, \ \ i\in\{1,2,3\}, \ \ \ \mu(h)=\frac{1}{nN}\sum_{j=1}^{N}{\sum_{k=0}^{n-1}{h(X^{j}_{k\Delta})R^{j}_{k\Delta}}},
\end{equation}
we deduce from Equation~\eqref{eq:property-least squares contrast-sigma} that for all $h\in\mathcal{S}_{m,L}$,
\begin{equation}
    \label{eq:first risk bound}
    \mathbb{E}\left[\left\|\widehat{\sigma}^{2}_{m,L}-\sigma^{2}\right\|^{2}_{n,N}\one_{[-\log(N),\log(N)]}\right] \leq \underset{h\in\mathcal{S}_{m,L}}{\inf}{\|h-\sigma^{2}\|^{2}_{n}}+2\sum_{i=1}^{3}{\mathbb{E}\left[\nu_i(\widehat{\sigma}^{2}_{m,L}-h)\right]}+2\mathbb{E}\left[\mu(\widehat{\sigma}^{2}_{m,L}-h)\right].
\end{equation}
For all $i\in\{1,2,3\}$ and for all $h\in\mathcal{S}_{m,L}$, one has
\begin{equation}
\label{eq:upper-bound Enu_i}
    \mathbb{E}\left[\nu_i\left(\widehat{\sigma}^{2}_{m,L}-h\right)\right]\leq\sqrt{2mL}\sqrt{\sum_{\ell=0}^{m-1}{\mathbb{E}\left[\nu^{2}_{i}(\phi_{\ell})\right]}}.
\end{equation}
\begin{enumerate}
    \item Upper bound of $\sum_{\ell=0}^{m-1}{\mathbb{E}\left[\nu^{2}_{1}(\phi_{\ell})\right]}$
    
    According to Equation~\eqref{eq:functions nu1 nu2 nu2 and mu}, we have
    \begin{equation*}
        \forall \ell\in[\![0,m-1]\!], \ \nu_1(\phi_{\ell})=\frac{1}{Nn}\sum_{j=1}^{N}{\sum_{k=0}^{n-1}{\phi_{\ell}(X^{j}_{k\Delta})\zeta^{j,1}_{k\Delta}}}
    \end{equation*}
    where $\zeta^{j,1}_{k\Delta}=\frac{1}{\Delta}\left[\left(\int_{k\Delta}^{(k+1)\Delta}{\sigma(X^{j}_{s})dW^{j}_s}\right)^2-\int_{k\Delta}^{(k+1)\Delta}{\sigma^{2}(X^{j}_{s})ds}\right]$ is a martingale satisfying
    \begin{equation*}
        \mathbb{E}\left[\zeta^{1,1}_{k\Delta}|\mathcal{F}_{k\Delta}\right]=0 \ \ \mathrm{and} \ \ \mathbb{E}\left[\left(\zeta^{1,1}_{k\Delta}\right)^2|\mathcal{F}_{k\Delta}\right]\leq\frac{1}{\Delta^2}\mathbb{E}\left[\left(\int_{k\Delta}^{(k+1)\Delta}{\sigma^{2}(X^{1}_{s})ds}\right)^2\right]\leq C\sigma^{4}_{1}
    \end{equation*}
    with $C>0$ a constant, $W=W^{1}$ and $(\mathcal{F}_t)_{t\geq 0}$ the natural filtration of the martingale $(M_t)_{t\in[0,1]}$ given for all $t\in[0,1]$ by $M_{t}=\int_{0}^{t}{\sigma(X^{1}_s)dW_s}$. We derive that
    \begin{align*}
        \sum_{\ell=0}^{m-1}{\mathbb{E}\left[\nu^{2}_{1}(\phi_{\ell})\right]}=&\frac{1}{Nn^2}\sum_{\ell=0}^{m-1}{\mathbb{E}\left[\left(\sum_{k=0}^{n-1}{\phi_{\ell}(X^{j}_{k\Delta})\zeta^{1,1}_{k\Delta}}\right)^2\right]}=\frac{1}{Nn^2}\mathbb{E}\left[\sum_{k=0}^{n-1}{\sum_{\ell=0}^{m-1}{\phi^{2}_{\ell}(X^{1}_{k\Delta})\left(\zeta^{1,1}_{k\Delta}\right)^2}}\right].
    \end{align*}
    For each $k\in[\![0,n-1]\!]$, we have 
    $$\begin{cases} 
    \sum_{\ell=0}^{m-1}{\phi^{2}_{\ell}(X^{1}_{k\Delta})} = \sum_{\ell=-M}^{K-1}{B^{2}_{\ell}(X^{1}_{k\Delta})} =1 ~~ \mathrm{for ~ the ~ spline ~ basis} \\ \\
    \sum_{\ell=0}^{m-1}{\phi^{2}_{\ell}(X^{1}_{k\Delta})} \leq Cm ~~ \mathrm{For ~ an ~ orthonormal ~ basis ~ with} ~ C=\underset{0 \leq\ell\leq m-1}{\max}{\|\phi_{\ell}\|^{2}_{\infty}}.
    \end{cases}
    $$ 
    Finally, there exists a constant $C>0$ such that
    \begin{align*}
        \sum_{\ell=0}^{m-1}{\mathbb{E}\left[\nu^{2}_{1}(\phi_{\ell})\right]} \leq \begin{cases}
        \frac{C}{Nn} ~~ \mathrm{for ~ the ~ spline ~ basis} \\ \\
        C\frac{m}{Nn} ~~ \mathrm{for ~ an ~ orthonormal ~ basis}.
        \end{cases}
    \end{align*}
    
    \item Upper bound of $\sum_{\ell=0}^{m-1}{\mathbb{E}\left[\nu^{2}_{2}(\phi_{\ell})\right]}$
    
    For all $k\in[\![0,n-1]\!]$ and for all $s\in[0,1]$, set $\eta(s)=k\Delta$ if $s\in[k\Delta,(k+1)\Delta)$. We have:
    \begin{align*}
        \sum_{\ell=0}^{m-1}{\mathbb{E}\left[\nu^{2}_{2}(\phi_{\ell})\right]}&=\frac{4}{N}\sum_{\ell=0}^{m-1}{\mathbb{E}\left[\left(\sum_{k=0}^{n-1}{\int_{k\Delta}^{(k+1)\Delta}{\phi_{\ell}(X^{1}_{k\Delta})((k+1)\Delta-s)\sigma^{\prime}(X^{1}_{s})\sigma^{2}(X^{1}_{s})dW_s}}\right)^2\right]}\\
        &=\frac{4}{N}\sum_{\ell=0}^{m-1}{\mathbb{E}\left[\left(\int_{0}^{1}{\phi_{\ell}(X^{1}_{\eta(s)})(\eta(s)+\Delta-s)\sigma^{\prime}(X^{1}_{s})\sigma^{2}(X^{1}_{s})dW_s}\right)^2\right]}.
    \end{align*}
    We conclude that
    \begin{align*}
        \sum_{\ell=0}^{m-1}{\mathbb{E}\left[\nu^{2}_{2}(\phi_{\ell})\right]} \leq \begin{cases}
        \frac{C}{Nn^2} ~~ \mathrm{for ~ the ~ spline ~ basis} \\ \\
        C\frac{m}{Nn^2} ~~ \mathrm{for ~ an ~ orthonormal ~ basis}.
        \end{cases}
    \end{align*}
     where the constant $C>0$ depends on the diffusion coefficient and the upper bound of the basis functions.
     
    \item Upper bound of $\sum_{\ell=0}^{m-1}{\mathbb{E}\left[\nu^{3}_{2}(\phi_{\ell})\right]}$
    
    We have:
    \begin{align*}
        \sum_{\ell=0}^{m-1}{\mathbb{E}\left[\nu^{2}_{3}(\phi_{\ell})\right]}&=\frac{4}{Nn^2}\sum_{\ell=0}^{m-1}{\mathbb{E}\left[\left(\sum_{k=0}^{n-1}{\int_{k\Delta}^{(k+1)\Delta}{\phi_{\ell}(X^{1}_{k\Delta})b(X^{1}_{k\Delta})\sigma(X^{1}_{s})dW_s}}\right)^2\right]}\\
        &=\frac{4}{Nn^2}\sum_{\ell=0}^{m-1}{\mathbb{E}\left[\left(\int_{0}^{1}{\phi_{\ell}(X^{1}_{\eta(s)})b(X^{1}_{\eta(s)})\sigma(X^{1}_{s})dW_s}\right)^2\right]}\\
        &\leq\frac{4}{Nn^2}\mathbb{E}\left[\int_{0}^{1}{\sum_{\ell=0}^{m-1}{\phi^{2}_{\ell}(X^{1}_{\eta(s)})b(X^{1}_{\eta(s)})\sigma^{2}(X^{1}_{s})ds}}\right].
    \end{align*}
    Since for all $x\in\mathbb{R}, \ b(x)\leq C_0(1+x^2)$ and $\underset{t\in[0,1]}{\sup}{\mathbb{E}\left(|X_t|^2\right)}<\infty$, there exists a constant $C>0$ depending on the diffusion coefficient such that
    \begin{align*}
        \sum_{\ell=0}^{m-1}{\mathbb{E}\left[\nu^{2}_{3}(\phi_{\ell})\right]} \leq \begin{cases}
        \frac{C}{Nn^2} ~~ \mathrm{for ~ the ~ spline ~ basis} \\ \\
        C\frac{m}{Nn^2} ~~ \mathrm{for ~ an ~ orthonormal ~ basis}.
        \end{cases}
    \end{align*}
\end{enumerate}

We finally deduce that from Equations~\eqref{eq:first risk bound}~and~\eqref{eq:upper-bound Enu_i}~ that for all $h\in\mathcal{S}_{m,L}$,
\begin{equation}
\label{eq:First-UpperBounds}
\begin{cases}
    \mathbb{E}\left[\left\|\widehat{\sigma}^{2}_{m,L}-\sigma^{2}\right\|^{2}_{n,N}\one_{[-\log(N),\log(N)]}\right] \leq \underset{h\in\mathcal{S}_{m,L}}{\inf}{\|h-\sigma^{2}\|^{2}_{n}}+C\sqrt{\frac{mL}{Nn}}+2\mathbb{E}\left[\mu(\widehat{\sigma}^{2}_{m,L}-h)\right] ~~ (1) \\ \\
    \mathbb{E}\left[\left\|\widehat{\sigma}^{2}_{m,L}-\sigma^{2}\right\|^{2}_{n,N}\one_{[-\log(N),\log(N)]}\right] \leq \underset{h\in\mathcal{S}_{m,L}}{\inf}{\|h-\sigma^{2}\|^{2}_{n}}+C\sqrt{\frac{m^{2}L}{Nn}}+2\mathbb{E}\left[\mu(\widehat{\sigma}^{2}_{m,L}-h)\right] ~~ (2)
\end{cases}
\end{equation}
where $C>0$ is a constant, the result $(1)$ corresponds to the spline basis, and the result (2) corresponds to any orthonormal basis. It remains to obtain an upper bound of the term $\mu(\widehat{\sigma}^{2}_{m,L}-h)$. For all $a>0$ and for all $h\in\mathcal{S}_{m,L}$,
\begin{align*}
    2\mu\left(\widehat{\sigma}^{2}_{m,L}-h\right) \leq & \frac{2}{a}\left\|\widehat{\sigma}^{2}_{m,L}-\sigma^{2}\right\|^{2}_{n,N}+\frac{2}{a}\left\|h-\sigma^{2}\right\|^{2}_{n,N}+\frac{a}{Nn}\sum_{j=1}^{N}{\sum_{k=0}^{n-1}{\left(R^{j}_{k\Delta}\right)^2}}\\
    2\mathbb{E}\left[\mu\left(\widehat{\sigma}^{2}_{m,L}-h\right)\right] \leq & \frac{2}{a}\mathbb{E}\left\|\widehat{\sigma}^{2}_{m,L}-\sigma^{2}\right\|^{2}_{n,N}+\frac{2}{a}\underset{h\in\mathcal{S}_{m,L}}{\inf}{\|h-\sigma^{2}\|^{2}_{n}} +\frac{a}{Nn}\sum_{j=1}^{N}{\sum_{k=0}^{n-1}{\mathbb{E}\left[\left(R^{j}_{k\Delta}\right)^2\right]}}.
\end{align*}
Using Equations~\eqref{eq:UpperBound-TimeStep},~\eqref{eq:First-UpperBounds}~and setting $a=4$, we deduce that there exists \textcolor{red}{a} constant $C>0$ depending on $\sigma_{1}$ such that,
\begin{equation}
\label{eq:pseudo-norm-loss-error}
   \begin{cases}
    \mathbb{E}\left[\left\|\widehat{\sigma}^{2}_{m,L}-\sigma^{2}\right\|^{2}_{n,N}\one_{[-\log(N),\log(N)]}\right]\leq \underset{h\in\mathcal{S}_{m,L}}{\inf}{\|h-\sigma^{2}\|^{2}_{n}}+C\left(\sqrt{\frac{mL}{Nn}}+\Delta^2\right) ~~ [\mathbf{B}] \\ \\
    \mathbb{E}\left[\left\|\widehat{\sigma}^{2}_{m,L}-\sigma^{2}\right\|^{2}_{n,N}\one_{[-\log(N),\log(N)]}\right] \leq \underset{h\in\mathcal{S}_{m,L}}{\inf}{\|h-\sigma^{2}\|^{2}_{n}}+C\left(\sqrt{\frac{m^{2}L}{Nn}}+\Delta^2\right) ~~ [\mathbf{H}].
   \end{cases}
\end{equation}
The final result is obtained from Equations \eqref{eq:Semi-Result} and \eqref{eq:pseudo-norm-loss-error}.
\end{proof}

\subsection{Proof of Section~\ref{sec:AdaptiveEstimation-N.paths}}

\subsubsection{Proof of Theorem \ref{thm:adaptive estimator - compact interval}}

Set for all $K, K^{\prime} \in \mathcal{K} = \left\{2^q, ~ q=0,\ldots, q_{\max}, ~ 2^{q_{\max}}\leq\sqrt{N}/\log(N)\right\} \subset \mathcal{M}$,
\begin{equation}
\label{eq:subspace TdN}
    \mathcal{T}_{K,K^{\prime}} = \left\{g\in\mathcal{S}_{K+M}+\mathcal{S}_{K^{\prime}+M}, \ \|g\|_{n}=1, ~ \|g\|_{\infty} \leq \sqrt{L}\right\}.
\end{equation}
Recall that for all $j \in [\![1,N]\!]$ and for all $k \in [\![0,n]\!]$,
$$ \zeta^{j,1}_{k\Delta} = \frac{1}{\Delta}\left[\left(\int_{k\Delta}^{(k+1)\Delta}{\sigma(X^{j}_{s})dW^{j}_{s}}\right)^2-\int_{k\Delta}^{(k+1)\Delta}{\sigma^{2}(X^{j}_{s})ds}\right].$$
The proof of Theorem~\ref{thm:adaptive estimator - compact interval} relies on the following lemma whose proof is in Appendix.
\begin{lemme}
\label{lm:LemmaAdaptation}
Under Assumption~\ref{ass:Assumption 1}, for all $\varepsilon, \ v>0$ and $g\in\mathcal{T}_{K,K^{\prime}}$, there exists a real constant $C>0$ such that,
\begin{align*}
\mathbb{P}\left(\frac{1}{Nn}\sum_{j=1}^{N}{\sum_{k=0}^{n-1}{g(X^{j}_{k\Delta})\zeta^{j,1}_{k\Delta}}}\geq\varepsilon, \|g\|^{2}_{n,N}\leq v^2\right)\leq\exp\left(-C\frac{Nn\varepsilon^{2}}{\sigma^{2}_{1}\left(\varepsilon\|g\|_{\infty}+4\sigma^{2}_{1}v^2\right)}\right)
\end{align*}
and for all $x>0$ such that $x\leq\varepsilon^{2}/\sigma^{2}_{1}\left(\varepsilon\|g\|_{\infty}+4\sigma^{2}_{1}v^{2}\right)$,
\begin{align*}
\mathbb{P}\left(\frac{1}{Nn}\sum_{j=1}^{N}{\sum_{k=0}^{n-1}{g(X^{j}_{k\Delta})\zeta^{j,1}_{k\Delta}}}\geq 2\sigma^{2}_{1}v\sqrt{x}+\sigma^{2}_{1}\|g\|_{\infty}x, \|g\|^{2}_{n,N}\leq v^2\right)\leq\exp\left(-CNnx\right).
\end{align*}
\end{lemme}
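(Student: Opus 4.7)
The strategy is to recognise the double sum as a martingale in an enlarged filtration and apply a Bernstein inequality with random predictable variance. Order the pairs $(j,k)$ lexicographically and set $\mathcal{H}_{j,k}=\mathcal{F}^{1}_{1}\vee\cdots\vee\mathcal{F}^{j-1}_{1}\vee\mathcal{F}^{j}_{k\Delta}$, where $\mathcal{F}^{j}_{t}$ denotes the natural filtration of the $j$-th diffusion path. Because $\E[\zeta^{j,1}_{k\Delta}\mid\mathcal{F}^{j}_{k\Delta}]=0$ and the paths are independent, the rescaled terms $\xi_{j,k}:=(Nn)^{-1}g(X^{j}_{k\Delta})\zeta^{j,1}_{k\Delta}$ form a martingale-difference sequence for $(\mathcal{H}_{j,k})$, and their predictable quadratic variation obeys $\sum_{j,k}\E[\xi_{j,k}^{2}\mid\mathcal{H}_{j,k}]\le C\sigma_{1}^{4}\|g\|^{2}_{n,N}/(Nn)$ using the variance bound from Equation~\eqref{eq:martingale-burkholder-davis-gundy}. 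The event $\{\|g\|^{2}_{n,N}\le v^{2}\}$ therefore controls precisely what a Bernstein-type inequality requires.

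The key technical input is a conditional factorial-moment bound for $\zeta^{j,1}_{k\Delta}$. Writing $Z=\int_{k\Delta}^{(k+1)\Delta}\sigma(X^{j}_{s})dW^{j}_{s}$ and $V=\int_{k\Delta}^{(k+1)\Delta}\sigma^{2}(X^{j}_{s})ds$, one has $\zeta^{j,1}_{k\Delta}=(Z^{2}-V)/\Delta$ with $0\le V\le\sigma_{1}^{2}\Delta$ almost surely and $V=\langle Z\rangle_{(k+1)\Delta}-\langle Z\rangle_{k\Delta}$. The Burkholder-Davis-Gundy inequality, combined with the almost-sure bound on $V$, yields $\E[|Z|^{2p}\mid\mathcal{F}^{j}_{k\Delta}]\le c^{p}p!\,\sigma_{1}^{2p}\Delta^{p}$ for every $p\ge 2$, so that, after centring,
\[
\E\!\left[\left|\zeta^{j,1}_{k\Delta}\right|^{p}\,\Big|\,\mathcal{F}^{j}_{k\Delta}\right]\le \frac{p!}{2}\,(c_{1}\sigma_{1}^{2})^{p-2}(c_{2}\sigma_{1}^{4}),\qquad p\ge 2,
\]
for suitable numerical constants $c_{1},c_{2}>0$. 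Multiplying by $g(X^{j}_{k\Delta})/(Nn)$, the martingale differences $\xi_{j,k}$ inherit the Bernstein condition with scale $M=c_{1}\sigma_{1}^{2}\|g\|_{\infty}/(Nn)$ and variance proxies $w^{2}_{j,k}=c_{2}\sigma_{1}^{4}g^{2}(X^{j}_{k\Delta})/(Nn)^{2}$, whose sum equals $c_{2}\sigma_{1}^{4}\|g\|^{2}_{n,N}/(Nn)$.

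The classical martingale Bernstein inequality with predictable variance (obtained through an exponential-supermartingale argument, as in \cite{baraud2001model}) then gives
\[
\mathbb{P}\!\left(\sum_{j,k}\xi_{j,k}\ge\varepsilon,\ \sum_{j,k}w^{2}_{j,k}\le W^{2}\right)\le\exp\!\left(-\frac{\varepsilon^{2}}{2(W^{2}+M\varepsilon)}\right).
\]
Taking $W^{2}=c_{2}\sigma_{1}^{4}v^{2}/(Nn)$ on the event $\{\|g\|^{2}_{n,N}\le v^{2}\}$ and absorbing numerical constants into a universal $C>0$ yields the first inequality. The second follows by the usual Bernstein inversion: substituting $\varepsilon=2\sigma_{1}^{2}v\sqrt{x}+\sigma_{1}^{2}\|g\|_{\infty}x$, the elementary lower bound $\varepsilon^{2}\ge 4\sigma_{1}^{4}v^{2}x+\sigma_{1}^{4}\|g\|_{\infty}^{2}x^{2}$ and the stated constraint on $x$ force the exponent to dominate $CNnx$. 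The main obstacle is tracking the BDG constants carefully enough to obtain factorial growth $c^{p}p!$ in the moment bound rather than a mere sub-Gaussian control: without this, only the Gaussian regime $\varepsilon\sim v\sqrt{x/Nn}$ would be recovered, losing the Poissonian contribution $\varepsilon\sim\|g\|_{\infty}x/(Nn)$ that is crucial for the second inequality.
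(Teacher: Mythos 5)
Your proposal is correct and follows essentially the same route as the paper: both arguments reduce to a conditional exponential-moment (Bernstein-type) bound for $\zeta^{j,1}_{k\Delta}$, an exponential-supermartingale/Chernoff step carried out on the event $\{\|g\|^{2}_{n,N}\leq v^{2}\}$ where the predictable variance is controlled, and the standard inversion to get the second form. The only difference is that the paper imports the conditional bound $\E[\exp(ug(X^{j}_{k\Delta})\zeta^{j,1}_{k\Delta}-au^{2}g^{2}(X^{j}_{k\Delta})/(1-bu))\mid\mathcal{F}_{k\Delta}]\leq 1$ directly from Comte, Genon-Catalot and Rozenholc and then optimizes over $u$, whereas you rederive the equivalent factorial-moment (Bernstein) condition from the Burkholder--Davis--Gundy inequality and the a.s. bound $V\leq\sigma_{1}^{2}\Delta$; both yield the same tail estimate.
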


\begin{proof}[{\bf Proof of Theorem \ref{thm:adaptive estimator - compact interval}}]

From Equation~\eqref{eq:selection of the dimension}, we have
\begin{align*}
\widehat{K}:=\underset{K\in\mathcal{K}}{\arg\min}{\left\{{\gamma}_{n,N}(\widehat{\sigma}^{2}_{K})+\textcolor{black}{\mathrm{pen}_1(K)}\right\}}.
\end{align*}
For all $K\in\mathcal{K}$ and $h\in\mathcal{S}_{K+M,L}$, 
$$\gamma_{n,N}(\widehat{\sigma}^{2}_{\widehat{K}})+\textcolor{black}{\mathrm{pen}_1(\widehat{K})}\leq\gamma_{n,N}(h)+\textcolor{black}{\mathrm{pen}_1(K)},$$
then, for all $K\in\mathcal{K}$ and for all $h\in\mathcal{S}_{K+M,L}$,
\begin{align*}
\gamma_{n,N}(\widehat{\sigma}^{2}_{\widehat{K}})-\gamma_{n,N}(\sigma^{2}_{|I})\leq &~ \gamma_{n,N}(h)-\gamma_{n,N}(\sigma^{2}_{|I})+\textcolor{black}{\mathrm{pen}_1(K)-\mathrm{pen}_1(\widehat{K})}\\
\left\|\widehat{\sigma}^{2}_{\widehat{K}}-\sigma^{2}_{|I}\right\|^{2}_{n,N} \leq &~\left\|h-\sigma^{2}_{|I}\right\|^{2}_{n,N}+2\nu\left(\widehat{\sigma}^{2}_{\widehat{K}}-h\right)+2\mu\left(\widehat{\sigma}^{2}_{\widehat{K}} - h\right)+\textcolor{black}{\mathrm{pen}_1(K)-\mathrm{pen}_1(\widehat{K})}\\
\leq &~\left\|h-\sigma^{2}_{|I}\right\|^{2}_{n,N}+\frac{1}{d}\left\|\widehat{\sigma}^{2}_{\widehat{K}}-t\right\|^{2}_{n}+d\underset{g\in\mathcal{T}_{K,\widehat{K}}}{\sup}{\nu^{2}(g)}+\frac{1}{d}\left\|\widehat{\sigma}^{2}_{\widehat{K}}-h\right\|^{2}_{n,N}\\
&+\frac{d}{Nn}\sum_{j=1}^{N}{\sum_{k=0}^{n-1}{(R^{j}_{k\Delta})^2}}+\textcolor{black}{\mathrm{pen}_1(K)-\mathrm{pen}_1(\widehat{K})}
\end{align*}
where $d>1$ and the space $\mathcal{T}_{K,\widehat{K}}$ is given in Equation~\eqref{eq:subspace TdN}. On the set $\Omega_{n,N,K_{\max}}$ (given in Equation \eqref{equivalence-set}): $\forall h\in\mathcal{S}_{K+M}, \ \ \frac{1}{2}\|h\|^{2}_{n}\leq\|h\|^{2}_{n,N}\leq\frac{3}{2}\|h\|^{2}_{n}$. Then on $\Omega_{n,N,K_{\max}}$, for all $d>1$ and for all $h\in\mathcal{S}_{K+M}$ with $K\in\mathcal{K}$,
\begin{align*}
\left(1-\frac{10}{d}\right)\left\|\widehat{\sigma}^{2}_{\widehat{K}}-\sigma^{2}_{|I}\right\|^{2}_{n,N}\leq &~\left(1+\frac{10}{d}\right)\left\|h-\sigma^{2}_{|I}\right\|^{2}_{n,N}+d\underset{h\in\mathcal{T}_{K,\widehat{K}}}{\sup}{\nu^{2}(h)}+\frac{d}{Nn}\sum_{j=1}^{N}{\sum_{k=0}^{n-1}{(R^{j}_{k\Delta})^2}}\\
&+\textcolor{black}{\mathrm{pen}_1(K)-\mathrm{pen}_1(\widehat{K})}.
\end{align*}
We set $d=20$. Then, on $\Omega_{n,N,\max}$ and for all $h\in\mathcal{S}_{K+M,L}$,
\begin{equation}
\label{eq:first star-proof of theorem 3.7}
\left\|\widehat{\sigma}^{2}_{\widehat{K}}-\sigma^{2}_{|I}\right\|^{2}_{n,N}\leq 3\left\|h - \sigma^{2}_{|I}\right\|^{2}_{n,N}+20\underset{h\in\mathcal{T}_{K,\widehat{K}}}{\sup}{\nu^{2}(h)}+\frac{20}{Nn}\sum_{j=1}^{N}{\sum_{k=0}^{n-1}{(R^{j}_{k\Delta})^2}}+2\left(\textcolor{black}{\mathrm{pen}_1(K)-\mathrm{pen}_1(\widehat{K})}\right).
\end{equation}
Let \textcolor{black}{$q_1 : \mathcal{K}^{2}\longrightarrow\mathbb{R}_{+}$} such that  \textcolor{black}{$160 q_1(K,K^{\prime})\leq 18 \mathrm{pen}_1(K)+16 \mathrm{pen}_1(K^{\prime})$}. Thus, on the set $\Omega_{n,N,K_{\max}}$, there exists a constant $C>0$ such that for all $h\in\mathcal{S}_{K+M}$
\begin{align*}
\mathbb{E}\left[\left\|\widehat{\sigma}^{2}_{\widehat{K}}-\sigma^{2}_{|I}\right\|^{2}_{n,N}\one_{\Omega_{n,N,K_{\max}}}\right]\leq &~ 34\left(\underset{h\in\mathcal{S}_{K+M,L}}{\inf}{\left\|h-\sigma^{2}_{|I}\right\|^{2}_{n}}+\textcolor{black}{\mathrm{pen}_1(K)}\right) \\
&+160\E\left(\underset{h\in\mathcal{T}_{K,\widehat{K}}}{\sup}{\nu^{2}_{1}(h)}-\textcolor{black}{q_1(K,\widehat{K})}\right)+C\Delta^{2}
\end{align*}
where
\begin{equation}
\label{eq:nu-ErrorTerm}
    \nu_{1}(h):=\frac{1}{Nn}\sum_{j=1}^{N}{\sum_{k=0}^{n-1}{h(X^{j}_{k\Delta})\zeta^{j,1}_{k\Delta}}}
\end{equation}
with $\zeta^{j,1}_{k\Delta}$ the error term. We set for all $K,K^{\prime}\in\mathcal{K}$,
\begin{equation*}
    G_{K}(K^{\prime}):=\underset{h\in\mathcal{T}_{K,K^{\prime}}}{\sup}{\nu^{2}_{1}(h)}
\end{equation*}
and for $N$ and $n$ large enough, $\left\|\widehat{\sigma}^{2}_{\widehat{K}}-\sigma^{2}_{|I}\right\|^{2}_{n,N}\leq 4(K+M)L$. We deduce that,
\begin{align*}
\mathbb{E}\left[\left\|\widehat{\sigma}^{2}_{\widehat{K}}-\sigma^{2}_{|I}\right\|^{2}_{n,N}\right] \leq &~ \mathbb{E}\left[\left\|\widehat{\sigma}^{2}_{\widehat{K}}-\sigma^{2}_{|I}\right\|^{2}_{n,N}\one_{\Omega_{n,N,K_{\max}}}\right]+\mathbb{E}\left[\left\|\widehat{\sigma}^{2}_{\widehat{K}}-\sigma^{2}_{|I}\right\|^{2}_{n,N}\one_{\Omega^{c}_{n,N,K_{\max}}}\right]\\
\leq &~ 34\underset{K\in\mathcal{K}}{\inf}{\left(\underset{h\in\mathcal{S}_{K+M}}{\inf}{\left\|h-\sigma^{2}_{|I}\right\|^{2}_{n}}+\textcolor{black}{\mathrm{pen}_1(K)}\right)} \\
&+C\Delta^2+4(K+M)L\mathbb{P}(\Omega^{c}_{n,N,K_{\max}})\\
&+160\mathbb{E}\left[\left(G_{K}(\widehat{K})-\textcolor{black}{q_1(K,\widehat{K})}\right)_{+}\one_{\Omega_{n,N,K_{\max}}}\right].
\end{align*}
\textcolor{black}{The unit ball $\overline{B}_{\|.\|_n}(0,1)$ of the approximation subspace $\mathcal{S}_{K+M}$ with respect to norm $\|.\|_n$ defined as follows:
\begin{align*}
    \overline{B}_{\|.\|_n}(0,1)=\left\{h\in\mathcal{S}_{K+M} : \|h\|_n\leq 1\right\}=\left\{h\in\mathcal{S}_{K+M} : \|h\|\leq\frac{1}{\tau_0}\right\}=\overline{B}_{2}(0,1/\tau_0),
\end{align*}
admits an $\varepsilon-$net $E_{\varepsilon}$ such that for each $\varepsilon\in(0,1], \ \left|E_{\varepsilon}\right|\leq\left(\frac{3}{\varepsilon\tau_0}\right)^{K+M}$ (see \cite{lorentz1996constructive}, Proposition 1.3, page 487).
}

Recall that $\mathcal{T}_{K,K^{\prime}}=\left\{g\in\mathcal{S}_{K+M}+\mathcal{S}_{K^{\prime}+M}, \|g\|_{n}=1, \|g\|_{\infty} \leq \sqrt{L}\right\}$ and consider the sequence \\ 
$\left(E_{\varepsilon_{k}}\right)_{k\geq 1}$ of $\varepsilon-$net with $\varepsilon_{k}=\varepsilon_{0} 2^{-k}$ and $\varepsilon_{0}\in(0,1]$. Moreover, set $N_k = \log(|E_{\varepsilon_k}|)$ for each $k\geq 0$. Then for each $g\in\mathcal{S}_{K+M}+\mathcal{S}_{K^{\prime}+M}$ such that $\|g\|_{\infty} \leq \sqrt{L}$, there exists a sequence $\left(g_k\right)_{k\geq 0}$ with $g_k\in E_{\varepsilon_{k}}$ such that $g=g_0+\sum_{k=1}^{\infty}{g_k-g_{k-1}}$. Set $\widetilde{\mathbb{P}}:=\mathbb{P}\left(.\cap\Omega_{n,N,K_{\max}}\right)$ and
\begin{equation*} 
\tau:=\sigma_1^2\sqrt{6x^{n,N}_{0}}+\sigma^{2}_{1}\sqrt{L}x^{n,N}_{0}+\sum_{k\geq 1}{\varepsilon_{k-1}\left\{\sigma_1^2\sqrt{6x^{n,N}_{k}}+2\sigma^{2}_{1}\sqrt{L}x^{n,N}_{k}\right\}}=y^{n,N}_0+\sum_{k\geq 0}{y^{n,N}_k}.
\end{equation*}
For all $h\in\mathcal{T}_{K,K^{\prime}}$ and on the event $\Omega_{n,N,K_{\max}}$, one has $\|h\|^{2}_{n,N}\leq\frac{3}{2}\|h\|^{2}_{n}=\frac{3}{2}$. Then, using the chaining technique of \cite{baraud2001model}, we have
\begin{align*}
   \widetilde{\mathbb{P}}\left(\underset{h\in\mathcal{T}_{K,K^{\prime}}}{\sup}{\nu_1(h)}>\tau\right) & =\widetilde{\mathbb{P}}\left(\exists (h_k)_{k\geq 0}\in\prod_{k\geq 0}{E_{\varepsilon_k}}/ \nu_1(h)=\nu_1(h_0)+\sum_{k=1}^{\infty}{\nu_1(h_k-h_{k-1})}>\tau\right)\\
    &\leq\sum_{h_0\in E_0}{\widetilde{\mathbb{P}}\left(\nu_1(h_0)>y^{n,N}_0\right)}+\sum_{k=1}^{\infty}{\sum_{\underset{h_{k-1}\in E_{\varepsilon_{k-1}}}{h_k\in E_{\varepsilon_k}}}{\widetilde{\mathbb{P}}\left(\nu_1(h_k-h_{k-1})>y^{n,N}_k\right)}}.
\end{align*}
According to Equation~\eqref{eq:nu-ErrorTerm} and Lemma~\ref{lm:LemmaAdaptation}, there exists a constant $C>0$ such that
\begin{align*}
    \widetilde{\P}\left(\nu_1(h_0) > y^{n,N}_{0}\right) \leq &~ \widetilde{\P}\left(\nu_1(h_0) > \sigma_1\sqrt{6x^{n,N}_{0}}+\sigma^{2}_{1}\|h_{0}\|_{\infty}x^{n,N}_{0}\right) \\
    \leq &~ \exp(-CNnx^{n,N}_{0}), \\
    \forall k\geq 1, ~ \widetilde{\P}\left(\nu_1(h_k - h_{k-1}) > y^{n,N}_{0}\right) \leq &~ \widetilde{\P}\left(\nu_1(h_k - h_{k-1}) > \sigma_1\sqrt{6x^{n,N}_{k}}+\sigma^{2}_{1}\|h_{k} - h_{k-1}\|_{\infty}x^{n,N}_{k}\right) \\
    \leq &~ \exp(-CNnx^{n,N}_{k}).
\end{align*}
Finally, since $N_k = \log(\left|E_{\varepsilon_k}\right|)$ for all $k\geq 0$, we deduce that 
\begin{align*}
    \widetilde{\mathbb{P}}\left(\underset{h\in\mathcal{T}_{K,K^{\prime}}}{\sup}{\nu_1(h)}>\tau\right) &\leq \left|E_{\varepsilon_0}\right|\exp(-CNnx^{n,N}_{0}) + \sum_{k=1}^{\infty}{\left(\left|E_{\varepsilon_k}\right|+\left|E_{\varepsilon_{k-1}}\right|\right)\exp(-CNnx^{n,N}_{k})} \\
    &\leq\exp\left(N_0-CNnx^{n,N}_0\right)+\sum_{k=1}^{\infty}{\exp\left(N_k+N_{k-1}-CNnx^{n,N}_k\right)}.
\end{align*}
We choose $x^{n,N}_0$ and $x^{n,N}_k, \ \ k\geq 1$ such that,
\begin{align*}
N_0 - CNnx^{n,N}_0 = & -a\left(K+K^{\prime} + 2M\right)-b \\ 
N_k + N_{k-1} - CNnx^{n,N}_k = & -k(K + K^{\prime}+2M) - a\left(K + K^{\prime} + 2M\right) - b
\end{align*}
where $a$ and $b$ are two positive real numbers. We deduce that
\begin{equation}
\label{eq:xk - tau}
x^{n,N}_k\leq C_0(1+k)\frac{K + K^{\prime}+2M}{Nn} \ \ \mathrm{and} \ \ \tau\leq C_1\sigma^{2}_{1}\sqrt{\sqrt{L}\frac{K + K^{\prime}+2M}{Nn}}
\end{equation}
with $C_0>0$ and $C_{1}$ two constants depending on $a$ and $b$. It \textcolor{black}{follows} that
\begin{align*}
\overset{\sim}{\mathbb{P}}\left(\underset{t\in\mathcal{T}_{K,K^{\prime}}}{\sup}{\nu(t)}>\tau\right)&\leq\frac{\mathrm{e}}{\mathrm{e}-1}\mathrm{e}^{-b}\exp\left\{-a\left(K + K^{\prime} + 2M\right)\right\}.
\end{align*}
From Equation~\eqref{eq:xk - tau}, we set 
$$\textcolor{black}{q_1\left(K,K^{\prime}\right)}=\kappa^{*}\sigma^{2}_{1}\sqrt{L}\frac{K + K^{\prime} + 2M}{Nn}$$ 
where $\kappa^{*}>0$ depends on $C_1>0$. 
Thus, for all $K,K^{\prime}\in\mathcal{K}$,
\begin{equation*}
\mathbb{P}\left(\left\{\underset{h\in\mathcal{T}_{K,K^{\prime}}}{\sup}{\nu^{2}(h)}>\textcolor{black}{q_1(K,K^{\prime})}\right\}\cap\Omega_{n,N,K_{\max}}\right)\leq \frac{\mathrm{e}^{-b+1}}{\mathrm{e}+1}\exp\left\{-a\left(K + K^{\prime} + 2M\right)\right\}
\end{equation*}
and there exists constants $c,C>0$ such that
\begin{multline*}
    \mathbb{E}\left[\left(G_{K}\left(K^{\prime}\right)-\textcolor{black}{q_1(K,K^{\prime})}\right)_{+}\one_{\Omega_{n,N,K_{\max}}}\right]\\
    \leq \frac{c(K+K^{\prime})}{Nn}\mathbb{P}\left(\left\{\underset{t\in\mathcal{T}_{K,K^{\prime}}}{\sup}{\nu^{2}(t)} > \textcolor{black}{q_1(K,K^{\prime})}\right\}\cap\Omega_{n,N,K_{\max}}\right)\\
    \leq \frac{C}{Nn}\exp\left\{-\frac{a}{2}\left(K+K^{\prime}\right)\right\}.
\end{multline*}
Finally, there exists a real constant $C>0$ such that, 
\begin{multline*}
\mathbb{E}\left[\left(G_{K}(\widehat{K})-\textcolor{black}{q_1(K,\widehat{K})}\right)_{+}\one_{\Omega_{n,N,K_{\max}}}\right]\leq \sum_{K^{\prime}\in\mathcal{K}}{\mathbb{E}\left[\left(G_{K}\left(K^{\prime}\right)-\textcolor{black}{q_1(K,K^{\prime})}\right)_{+}\one_{\Omega_{n,N,K_{\max}}}\right]}\leq\frac{C}{Nn}.
\end{multline*}
We choose the penalty function $\mathrm{pen}$ such that for each $K\in\mathcal{K}$, 
$$\textcolor{black}{\mathrm{pen}_1(K)}\geq\kappa\sigma^{2}_{1}\sqrt{L}\frac{K+M}{Nn}.$$ 
For $N$ large enough, one has $\sigma^{2}_{1}\leq\sqrt{L}$. Thus, we finally set $\textcolor{black}{\mathrm{pen}_1(K)}=\kappa\frac{(K+M)\log(N)}{Nn}$ with $L = \log(N)$. Then, there exists a constant $C>0$ such that,
\begin{align*}
\mathbb{E}\left[\left\|\widehat{\sigma}^{2}_{\widehat{K}}-\sigma^{2}_{|I}\right\|^{2}_{n,N}\right] & \leq 34\underset{K\in\mathcal{K}}{\inf}{\left\{\underset{h\in\mathcal{S}_{K+M}}{\inf}{\left\|h-\sigma^{2}_{|I}\right\|^{2}_{n}}+\textcolor{black}{\mathrm{pen}_1(K)}\right\}}+\frac{C}{Nn}.
\end{align*}
\end{proof}

\subsubsection{Proof of Theorem~\ref{thm:adaptive estimator - non compact interval}~}

\begin{proof}
     From Equation~\eqref{eq:selection of the dimension}, we have
     $$ \w{K} := \underset{K\in\mathcal{K}}{\arg\min}{\gamma_{n,N}(\w{\sigma}^{2}_{K+M,L}) + \textcolor{black}{\mathrm{pen}_2(K)}}. $$
     Then, for all $K\in\mathcal{K}$ and for all $h \in \mathcal{S}_{K+M,L}$, we have
     \begin{align*}
         \gamma_{n,N}(\w{\sigma}^{2}_{\w{K},L}) + \textcolor{black}{\mathrm{pen}_2(\w{K})} \leq \gamma_{n,N}(h) + \textcolor{black}{\mathrm{pen}_2(K)}.
     \end{align*}
     Then, for all $K\in\mathcal{K}$ and for all $h\in\mathcal{S}_{K+M,L}$,
     \begin{align*}
         \gamma_{n,N}(\w{\sigma}^{2}_{\w{K},L}) - \gamma_{n,N}(\sigma^{2}) \leq &~ \gamma_{n,N}(h) - \gamma_{n,N}(\sigma^{2}) + \textcolor{black}{\mathrm{pen}_2(K) - \mathrm{pen}_2(\w{K})} \\
         \left\|\w{\sigma}^{2}_{\w{K},L} - \sigma^{2}\right\|^{2}_{n,N} \leq &~ \|h - \sigma^{2}\|^{2}_{n,N} + 2\nu(\w{\sigma}^{2}_{\w{K},L} - h) + 2\mu(\w{\sigma}^{2}_{\w{K},L} - h) + \textcolor{black}{\mathrm{pen}_2(K) - \mathrm{pen}_2(\w{K})}.
     \end{align*}
     We have for all $a>0$,
     \begin{align*}
         2\mathbb{E}\left[\mu\left(\widehat{\sigma}^{2}_{\w{K},L}-h\right)\right] \leq & \frac{2}{a}\mathbb{E}\left\|\widehat{\sigma}^{2}_{\w{K},L}-\sigma^{2}\right\|^{2}_{n,N}+\frac{2}{a}\underset{h\in\mathcal{S}_{K+M,L}}{\inf}{\|h-\sigma^{2}\|^{2}_{n}} +\frac{a}{Nn}\sum_{j=1}^{N}{\sum_{k=0}^{n-1}{\mathbb{E}\left[\left(R^{j}_{k\Delta}\right)^2\right]}}
     \end{align*}
     and since $\nu = \nu_1 + \nu_2 + \nu_3$, according to the proof of Theorem~\ref{thm:RiskBound-AnySupport}, there exists a constant $c>0$ such that
     $$ \E\left[\nu(\w{\sigma}^{2}_{\w{K},L} - h)\right] \leq c\E\left[\nu_1(\w{\sigma}^{2}_{\w{K},L} - h)\right] $$
     where the for $i\in\{1,2,3\}$ and for all $h \in \mathcal{S}_{K+M,L}, ~ K\in\mathcal{K}$,
     $$ \nu_i(h) = \frac{1}{Nn}\sum_{j=1}^{N}{\sum_{k=0}^{n-1}{h(X^{j}_{k\Delta})\zeta^{j}_{k\Delta}}},$$
     and the $\zeta^{j}_{k\Delta}$ are given Then,
     \begin{align*}
         \left(1-\frac{2}{a}\right)\E\left[\left\|\w{\sigma}^{2}_{\w{K},L} - \sigma^{2}\right\|^{2}_{n,N}\right] \leq &~ \left(1+\frac{2}{a}\right)\underset{h\in\mathcal{S}_{K+M,L}}{\inf}{\left\|h-\sigma^{2}\right\|^{2}_{n}} + 2c\E\left[\nu_1(\w{\sigma}^{2}_{\w{K},L} - h)\right] \\
         &+ \textcolor{black}{\mathrm{pen}_2(K)-\mathrm{pen}_2(\w{K})} + \frac{a}{Nn}\sum_{j=1}^{N}{\sum_{k=0}^{n-1}{\E\left[\left(R^{j}_{k\Delta}\right)^2\right]}}
     \end{align*}
From Equation~\eqref{eq:UpperBound-TimeStep} and for $a = 4$, there exists a constant $C>0$ such that
\begin{equation}
\label{eq:Adaptive.R-bound0}
     \E\left[\left\|\w{\sigma}^{2}_{\w{K},L} - \sigma^{2}\right\|^{2}_{n,N}\right] \leq 3\underset{h\in\mathcal{S}_{K+M,L}}{\inf}{\left\|h-\sigma^{2}\right\|^{2}_{n}} + 4c\E\left[\nu_1(\w{\sigma}^{2}_{\w{K},L} - h)\right] + 2\left(\textcolor{black}{\mathrm{pen}_2(K)-\mathrm{pen}_2(\w{K})}\right) + C\Delta^{2}.
\end{equation}
Since for all $K\in\mathcal{K}, ~ \textcolor{black}{\mathrm{pen}_2(K) \geq 2\kappa^{*}\sigma^{2}_{1}\sqrt{(K+M)L/(Nn)}}$, define the function \textcolor{black}{$q_2: (K,K^{\prime}) \mapsto q_2(K,K^{\prime})$} such that 
\begin{equation}
\label{eq:Adaptive.penalty.q}
    \textcolor{black}{q_2(K,K^{\prime})  = 2C^{*}\sigma^{2}_{1}\sqrt{\frac{(K+K^{\prime}+2M)L}{Nn}} \geq 2\sigma^{2}_{1}v\sqrt{x^{n,N}} + \sigma^{2}_{1}vx^{n,N}} 
\end{equation}
where
$$ \textcolor{black}{x^{n,N} \propto \frac{K+K^{\prime}+2M}{Nn} ~~ \mathrm{and} ~~ v = \sqrt{2L}}. $$
The constant $C^{*}>0$ depends on constants $\kappa^{*}>0$ and $c>0$ of Equation~\eqref{eq:Adaptive.R-bound0} such that
$$ \textcolor{black}{4cq_2(K,K^{\prime}) \leq \mathrm{pen}_2(K) + 2\mathrm{pen}_2(K^{\prime})}. $$
Then for all $K \in \mathcal{K}$ and for all $h\in\mathcal{S}_{K+M,L}$,
\begin{equation}
\label{eq:Adaptive.R-bound1}
     \E\left[\left\|\w{\sigma}^{2}_{\w{K},L} - \sigma^{2}\right\|^{2}_{n,N}\right] \leq 3\left(\underset{h\in\mathcal{S}_{K+M,L}}{\inf}{\left\|h-\sigma^{2}\right\|^{2}_{n}} + \textcolor{black}{\mathrm{pen}_2(K)}\right) + 4c\E\left[\left(\nu_1(\w{\sigma}^{2}_{\w{m},L} - h) - \textcolor{black}{q_2(K,\w{K})}\right)_{+}\right] + C\Delta^{2}.
\end{equation}
For all $K\in\mathcal{K}$ and for all $h\in\mathcal{S}_{K+M,L}$ such that $\|h\|_{\infty} \leq \sqrt{L}$, we have ,
\begin{equation*}
    \left\|\w{\sigma}^{2}_{\w{K},L} - h\right\|^{2}_{n,N} \leq \left\|\w{\sigma}^{2}_{\w{K},L} - h\right\|^{2}_{\infty} \leq 2L =: v^2.
\end{equation*}
Then, using Equation~\eqref{eq:Adaptive.penalty.q} and Lemma~\ref{lm:LemmaAdaptation}, there exists a constant $C>0$ such that for all $K,K^{\prime}\in\mathcal{K}$ and for all $h\in\mathcal{S}_{K+M,L}$,
\begin{equation}
\label{eq:Adaptive.R-bound2}
   \P\left(\nu_1(\w{\sigma}^{2}_{K^{\prime},L} - h) \geq q(K,K^{\prime}), ~ \|\w{\sigma}^{2}_{\w{K},L} - h\|^{2}_{n,N} \leq v^2\right) \leq \exp\left(-CNnx^{n,N}\right).
\end{equation}
Since $\textcolor{black}{L = \log(Nn)}$, then for $N$ large enough, $\textcolor{black}{\sigma^{2}_{1}\leq \sqrt{\log(Nn)}}$, we finally choose 
$$\textcolor{black}{\mathrm{pen}_2(K) = \kappa \sqrt{\frac{(K+M)\log^{2}(Nn)}{Nn}}}$$
where $\kappa>0$ is a new constant. Since $\E\left[\nu_1(\w{\sigma}^{2}_{\w{K},L} - h)\right] \leq \mathrm{O}\left(\sqrt{\frac{(K_{\max}+M)\log^{2}(N)}{Nn}}\right)$ (see proof of Theorem~\ref{thm:RiskBound-AnySupport}), for all $K \in \mathcal{K}$ and $h \in \mathcal{S}_{K+M,L}$, there exists a constant $c>0$ such that 
\begin{align*}
    \E\left[\left(\nu_1(\w{\sigma}^{2}_{\w{K},L} - h) - \textcolor{black}{q_2(K,\w{K})}\right)_{+}\right] \leq &~ \underset{K^{\prime} \in \mathcal{K}}{\max}{\left\{\E\left[\left(\nu_1(\w{\sigma}^{2}_{K^{\prime},L} - h) - \textcolor{black}{q_2(K,K^{\prime})}\right)_{+}\right]\right\}} \\
    \leq &~ c\textcolor{black}{q_2(K,K_{\max})}\underset{K^{\prime}\in\mathcal{K}}{\max}{\left\{\P\left(\nu_1(\w{\sigma}^{2}_{K^{\prime},L} - h) \geq \textcolor{black}{q_2(K,K^{\prime})}\right)\right\}}.
\end{align*}
From Equation~\eqref{eq:Adaptive.R-bound2}, we obtain that
\begin{equation}
\label{eq:adaptive.Rbound2bis}
    \textcolor{black}{\E\left[\left(\nu_1(\w{\sigma}^{2}_{\w{K},L} - h) - q_2(K,\w{K})\right)_{+}\right] \leq cq_2(K,K_{\max})\exp(-C(K+K_{\max}))\leq \frac{C}{\sqrt{Nn}}}
\end{equation}
since $K$ and $K_{\max}$ increase with the size $N$ of the sample paths $D_{N,n}$, and
$$ \textcolor{black}{c\sqrt{Nn}q_2(K,K_{\max})\exp(-C(K+K_{\max})) \rightarrow 0 ~~ \mathrm{as} ~~ N, n \rightarrow \infty ~~ (\mathrm{or} ~ N = 1 ~ \mathrm{and} ~ n \rightarrow \infty).} $$
Then, from Equations~\eqref{eq:adaptive.Rbound2bis}~and~\eqref{eq:Adaptive.R-bound1}, there exists a constant $C>0$ such that
\begin{equation*}
   \E\left[\left\|\w{\sigma}^{2}_{\w{K},L} - \sigma^{2}\right\|^{2}_{n,N}\right] \leq 3\underset{K\in\mathcal{K}}{\inf}{\left\{\underset{h\in\mathcal{S}_{K+M,L}}{\inf}{\left\|h-\sigma^{2}\right\|^{2}_{n}} + \textcolor{black}{\mathrm{pen}_2(K)}\right\}} + \textcolor{black}{\frac{C}{\sqrt{Nn}}}.
\end{equation*}
\end{proof}

\subsubsection{Proof of Theorem~\ref{thm:AdaptiveEstimator-OnePath}}

\begin{proof}
    The proof of Theorem~\ref{thm:AdaptiveEstimator-OnePath} is similar to the proof of Theorem~\ref{thm:adaptive estimator - compact interval}. Then, from \textcolor{black}{Equation~\eqref{eq:equation3-proof1}}, for all $h\in\mathcal{S}_{K+M}$,
    \begin{align*}
        \textcolor{black}{\mathbb{E}\left[\left\|\widehat{\sigma}^{2}_{\w{K}}-\sigma^{2}_{|I}\right\|^{2}_{n,1}\one_{\Omega_{n,K_{\max}}}\right]\leq} &~  \textcolor{black}{3\underset{h\in\mathcal{S}_{K+M}}{\inf}{\left\|h-\sigma^{2}_{|I}\right\|^{2}_{n}}+20\mathbb{E}\left(\underset{h\in\mathcal{S}_{K+M}, \|h\|_{X}=1}{\sup}{\nu^{2}(h)}\mathds{1}_{\Omega_{n,K_{\max}}}\right)+C\Delta^2}\\
        &+ \textcolor{black}{2\left(\mathrm{pen}(K)-\mathrm{pen}(\widehat{K})\right)},
    \end{align*}
    where $\mathcal{T}_{K,K^{\prime}} = \{h \in \mathcal{S}_{K+M}+\mathcal{S}_{K^{\prime}+M}, ~ \|h\|_{X} = 1, ~ \|h\|_{\infty}\leq\sqrt{L}\}$. Let \textcolor{black}{$q_3: \mathcal{K}^{2} \longrightarrow \R_{+}$} such that \textcolor{black}{$160q_3(K,K^{\prime}) \leq 18\mathrm{pen}_3(K) + 16\mathrm{pen}_3(K^{\prime})$}.  
   
  Recall that the $\mathbb{L}^{2}-$norm $\|.\|$, the norm $\E\left[\|.\|_{X}\right]$ and the empirical norm $\|.\|_{n}$ are equivalent on $\mathbb{L}^{2}(I)$ since the transition density is bounded on the compact interval $I$. Then, for all $K \in \mathcal{K}$ and $h \in \mathcal{S}_{K+M,L},$ we have
\begin{align*}
   \E\left[\left\|\widehat{\sigma}^{2}_{\widehat{K}}-\sigma^{2}_{|I}\right\|^{2}_{n,1}\right] \leq &~ 3\left(\underset{h\in\mathcal{S}_{K+M,L}}{\inf}{\left\|h-\sigma^{2}_{|I}\right\|^{2}_{n}}+\textcolor{black}{\mathrm{pen}_3(K)}\right) + \textcolor{black}{20\E\left[\underset{h\in\mathcal{T}_{K,\w{K}}}{\sup}{\nu^{2}_{1}(h)}-q_3(K,\w{K})\mathds{1}_{\Omega_{n,K_{\max}}}\right]} \\
   &+ C\Delta^{2} + \textcolor{black}{CL\dfrac{K^{2\gamma}_{\max}}{n^{\gamma/2}}},
\end{align*}
 where \textcolor{black}{the numerical constant $\gamma$ can take any value in $(1,+\infty)$, and} 
\begin{equation*}
    \nu_{1}(h):=\frac{1}{n}\sum_{k=0}^{n-1}{h(X^{1}_{k\Delta})\zeta^{1,1}_{k\Delta}}
\end{equation*}
with $\zeta^{1,1}_{k\Delta}$ the error term. We set for all $K,K^{\prime} \in \mathcal{K}$, $G_{K}(K^{\prime}):=\underset{h\in\mathcal{T}_{K,K^{\prime}}}{\sup}{\nu^{2}_{1}(h)}$ \textcolor{black}{and $\gamma = 20$}. Then, \textcolor{black}{for $K_{\max} = n^{1/5}$ and $L = \log(n)$}, there exists $C>0$ such that
\begin{align*}
\E\left[\left\|\w{\sigma}^{2}_{\widehat{K}}-\sigma^{2}_{|I}\right\|^{2}_{n,1}\right] \leq &~ 3\underset{K\in\mathcal{K}}{\inf}{\left\{\underset{h\in\mathcal{S}_{K+M,L}}{\inf}{\left\|h-\sigma^{2}_{|I}\right\|^{2}_{n}}+\textcolor{black}{\mathrm{pen}_3(K)}\right\}} + C\Delta^{2} \textcolor{black}{ +  C\dfrac{\log(n)}{n^{2}}}\\
& + 20\sum_{K^{\prime}\in\mathcal{K}}{\E\left[\left(G_{K}(K^{\prime})-\textcolor{black}{q_3(K,K^{\prime})}\right)_{+}\textcolor{black}{\mathds{1}_{\Omega_{n,K_{\max}}}}\right]} . 
\end{align*}
Considering the unit ball $\overline{B}_{\|.\|_X}(0,1)$ of the approximation subspace given by 
$$ \overline{B}_{\|.\|_X}(0,1) = \left\{h\in\mathcal{S}_{K+M}, ~ \|h\|^{2}_{X} \leq 1\right\} = \left\{h\in\mathcal{S}_{K+M}, ~ \|h\|^{2} \leq \frac{1}{\tau_0}\right\}. $$
\textcolor{black}{Then, using a similar reasoning as in Theorem~\ref{thm:adaptive estimator - compact interval}, we obtain that},
$$ \sum_{K^{\prime}\in\mathcal{K}}{\E\left[\left(G_{K}(K^{\prime}) - \textcolor{black}{q_3(K,K^{\prime})}\right)_{+}\right]} \leq \frac{C}{n}, $$
where $C>0$ is a constant, $\textcolor{black}{q_3(K,K^{\prime})} \propto \sigma^{4}_{1}\frac{(K+K^{\prime}+2M)\sqrt{\log(n)}}{n}$ and $\textcolor{black}{\mathrm{pen}_3(K)} \propto \frac{(K+M)\log(n)}{n}$. Then we obtain
\begin{equation*}
    \mathbb{E}\left[\left\|\widehat{\sigma}^{2}_{\widehat{K},L}-\sigma^{2}_{|I}\right\|^{2}_{n,1}\right] \leq \frac{3}{\tau_0}\underset{K\in\mathcal{K}}{\inf}{\left\{\underset{h\in\mathcal{S}_{K+M,L}}{\inf}{\left\|h-\sigma^{2}_{|I}\right\|^{2}_{n}}+\textcolor{black}{\mathrm{pen}_3(K)}\right\}} + \frac{C}{n}.
\end{equation*}
\end{proof}

\newpage


\bibliographystyle{ScandJStat}
\bibliography{mabiblio.bib}


\newpage

Eddy Ella-Mintsa, LAMA, Université Gustave Eiffel, 16 Bd Descartes, 77420, Champs-sur-Marne, France\\
Email: \href{mailto:eddy-michel.ella-mintsa@univ-eiffel.fr}{\texttt{eddy-michel.ella-mintsa@univ-eiffel.fr}}

\newpage

\appendix{\huge Appendix}
\label{app:appendix}

\subsection*{Calibration}

Fix the drift function $b(x) = 1-x$, the time-horizon $T=1$ and at time $t=0, ~ x_0=0$. Consider the following three models:
\begin{itemize}
    \item[] Model $1$: $\sigma(x)=1$ 
    \item[] Model $2$: $\sigma(x)=0.1+0.9/\sqrt{1+x^2}$
    \item[] Model $3$: $\sigma(x) = 1/3+\sin^{2}(2\pi x)/\pi + 1/(\pi+x^2)$.
\end{itemize}

The three diffusion models satisfy Assumption~\ref{ass:Assumption 1}~ and are used to calibrate the numerical constant $\kappa$ \textcolor{black}{of each penalty function.}

As we already know, the adaptive estimator of $\sigma^{2}$ on the interval $[-\sqrt{\log(N)}, \sqrt{\log(N)}]$ necessitate a data-driven selection of an optimal dimension through the minimization of the penalized least squares contrast given in Equation~\eqref{eq:selection of the dimension}~. Since the penalty function $\mathrm{pen}(d_N)=\kappa (K_N+M)\log^2(N)/N^2$ depends on the unknown numerical constant $\kappa>0$, the goal is to select an optimal value of $\kappa$ in the set $\mathcal{V}=\{0.1,0.5,1,2,4,5,7,10\}$ of its possible values. To this end, we repeat $100$ times the following steps:
\begin{enumerate}
    \item Simulate learning samples $D_N$ and $D_{N^{\prime}}$ with $N\in\{50,100\}$, $N^{\prime}=100$ and $n \in \{100, 250\}$
    \item For each $\kappa\in\mathcal{V}$:
          \begin{enumerate}
              \item For each $K_N\in\mathcal{K}$ and from $D_N$, compute $\widehat{\sigma}^{2}_{d_N,L_N}$ given in Equations~\eqref{eq:non adaptive estimator}~and~\eqref{eq:ridge estimator-non adaptive}.
              \item Select the optimal dimension $\widehat{K}_N\in\mathcal{K}$ using Equation~\eqref{eq:selection of the dimension}~
              \item Using the learning sample $D_{N^{\prime}}$, evaluate $\left\|\widehat{\sigma}^{2}_{\widehat{d}_N,L_N}-\sigma^{2}_{A}\right\|^{2}_{n,N^{\prime}}$ where $\widehat{d}_N=\widehat{K}_N+M$.
          \end{enumerate}
\end{enumerate}

Then, we calculate average values of $\left\|\widehat{\sigma}^{2}_{\widehat{d}_N,L_N}-\sigma^{2}_{A}\right\|^{2}_{n,N^{\prime}}$ for each $\kappa\in\mathcal{V}$ and obtain the following results:

\begin{figure}[hbtp]
    \centering
    \includegraphics[width=0.75\linewidth, height=0.35\textheight]{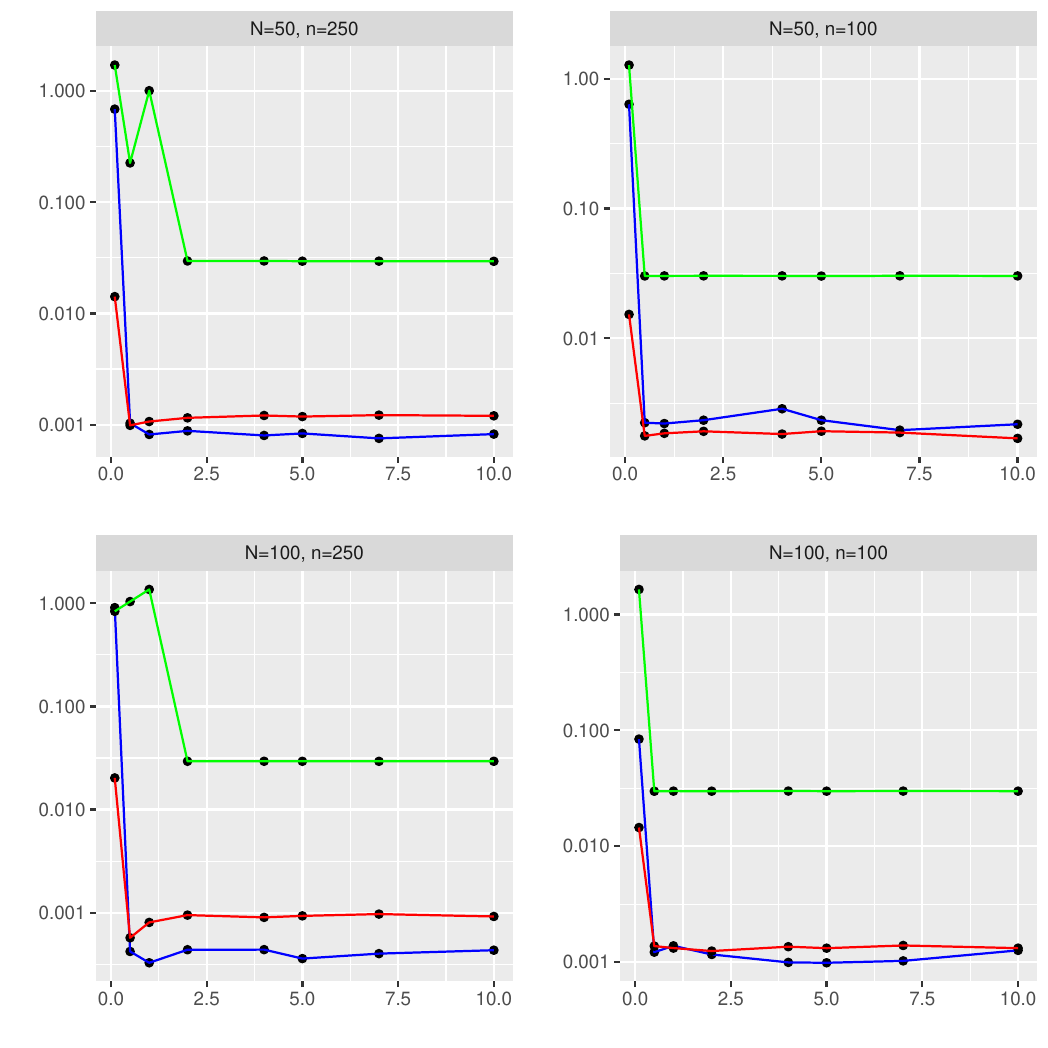}
    \caption{Calibration of the constant $\kappa\in\mathcal{V}$ of the penalty function}
    \label{fig:calibration}
\end{figure}

We finally choose $5\in\mathcal{V}$ as the optimal value of $\kappa$ in reference to the results of Figure~\ref{fig:calibration}.


\begin{proof}[\textbf{Proof of Lemma}~\ref{lm:OccupationFormula}]
The proof is divided into two parts for each of the two results to be proven.

\paragraph{First result.}

Since the function $h$ is continuous on $\R$, let $H$ be a primitive of $h$ on $\R$. We deduce that for all $s \in [0,1]$, 
    \begin{equation*}
        h(X_s) = \underset{\varepsilon \rightarrow 0}{\lim}{\frac{H(X_s + \varepsilon) - H(X_s - \varepsilon)}{2\varepsilon}} = \underset{\varepsilon\rightarrow 0}{\lim}{\frac{1}{2\varepsilon}\int_{X_s - \varepsilon}^{X_s + \varepsilon}{h(x)dx}} = \underset{\varepsilon\rightarrow 0}{\lim}{\frac{1}{2\varepsilon}\int_{-\infty}^{+\infty}{h(x)\one_{(x-\varepsilon,x+\varepsilon)}(X_s)dx}}.
    \end{equation*}
Finally, since $h$ is integrable on $\R$ and using the theorem of dominated convergence, we obtain
\begin{equation*}
        \int_{0}^{1}{h(X_s)ds} = \int_{-\infty}^{+\infty}{h(x)\underset{\varepsilon\rightarrow 0}{\lim}{\frac{1}{2\varepsilon}\int_{0}^{1}{\one_{(x-\varepsilon,x+\varepsilon)}(X_s)ds}}dx} = \int_{-\infty}^{+\infty}{h(x)\mathcal{L}^{x}dx}.
    \end{equation*}

\paragraph{Second result.}

    Fix $t\in(0,1]$ and consider $P_X : (t,x) \mapsto \int_{-\infty}^{x}{p_X(t,y)dy}$ the cumulative density function of the random variable $X_t$ of the density function $x \mapsto p_X(t,x)$. We have:
    \begin{align*}
       \forall~x\in\R, ~~ \E(\mathcal{L}^{x}) = & \underset{\varepsilon\rightarrow 0}{\lim}{\frac{1}{2\varepsilon}\int_{0}^{1}{\E\left[\one_{(x-\varepsilon,x+\varepsilon)}(X_s)\right]ds}} = \underset{\varepsilon\rightarrow 0}{\lim}{\frac{1}{2\varepsilon}\int_{0}^{1}{\P\left(x - \varepsilon \leq X_s\leq x + \varepsilon\right)ds}} \\
       = & \int_{0}^{1}{\underset{\varepsilon\rightarrow 0}{\lim}{\frac{P_X(s,x+\varepsilon) - P_X(s,x-\varepsilon)}{2\varepsilon}}ds} \\
       = & \int_{0}^{1}{p_X(s,x)ds}.
    \end{align*}
\end{proof}


\begin{proof}[\textbf{Proof of Lemma}~\ref{lm:LocalTimeBicontinuous}]
    From \cite{revuz2013continuous}, {\it Theorem 1.7}, we have
    \begin{equation*}
        \forall~x\in\R, ~~ \mathcal{L}^{x} - \mathcal{L}^{x_{-}} = 2\int_{0}^{1}{\one_{X_s = x}dX_s} = 2\int_{0}^{1}{\one_{X_s = x}b(X_s)ds} + 2\int_{0}^{1}{\one_{X_s = x}\sigma(X_s)dW_s}.
    \end{equation*}
  \textcolor{black}{For each $s \in [0,1]$, let $F_s$ be the cumulative density function of the random variable $X_s$.} For all $x\in\R$ and for all $s\in[0,1]$, we have for all $\varepsilon>0$,
  \begin{align*}
      \P(X_s = x) = &~ \underset{\varepsilon\rightarrow 0}{\lim}{~\P(X_s\leq x + \varepsilon)} - \underset{\varepsilon\rightarrow 0}{\lim}{~\P(X_s\leq x - \varepsilon)} = \underset{\varepsilon\rightarrow 0}{\lim}{~F_s(x + \varepsilon)} - \underset{\varepsilon\rightarrow 0}{\lim}{~F_s(x - \varepsilon)} \\
      = &~ F_s(x) - F_s(x^{-}) \\
      = &~ 0
  \end{align*}
  Thus, for all $x\in\R$,
    \begin{align*}
        \E\left[\left|\mathcal{L}^{x} - \mathcal{L}^{x_{-}}\right|\right] \leq &~ 2\int_{0}^{1}{|b(x)|\P(X_s = x)ds} + 2\E\left[\left|\int_{0}^{1}{\one_{X_s=x}\sigma(X_s)dW_s}\right|\right] \\
        = &~ 2\E\left[\left|\int_{0}^{1}{\one_{X_s=x}\sigma(X_s)dW_s}\right|\right].
    \end{align*}

    Using the Cauchy Schwartz inequality, we conclude that
    \begin{align*}
        \E\left[\left|\mathcal{L}^{x} - \mathcal{L}^{x_{-}}\right|\right] \leq &~ 2\sqrt{\E\left(\int_{0}^{1}{\one_{X_s=x}\sigma^{2}(X_s)ds}\right)} = 2\sigma(x)\int_{0}^{1}{\P(X_s = x)ds} = 0.
    \end{align*}

    Using the Markov inequality, we have
    \begin{equation*}
        \forall~\varepsilon>0, ~~ \P(|\mathcal{L}^{x} - \mathcal{L}^{x_{-}}|>\varepsilon) \leq \frac{1}{\varepsilon}\E\left[\left|\mathcal{L}^{x} - \mathcal{L}^{x_{-}}\right|\right] = 0.
    \end{equation*}

    We finally conclude that for all $x \in \R$,
    \begin{equation*}
        \P(\mathcal{L}^{x}\neq \mathcal{L}^{x_{-}}) = \P(|\mathcal{L}^{x} - \mathcal{L}^{x_{-}}|>0) = 0.
    \end{equation*}
\end{proof}


\begin{proof}[\textbf{Proof of Lemma~\ref{lm:Proba-OmegaComp-OnePath}~}]

The proof of this Lemma mainly \textcolor{black}{focuses} on the spline basis and the Fourier basis based on functions $\cos$ and $\sin$ which are Lipschitz functions. Thus, for all $g = \sum_{\ell = 0}^{m-1}{a_{\ell}\phi_{\ell}} \in \mathcal{S}_{m}$,
\begin{equation}
\label{eq:Diff-NormXNorm-n.1}
    \left|\|g\|^{2}_{n,1} - \|g\|^{2}_{X}\right| \leq \int_{0}^{1}{\left|g^{2}(X_{\eta(s)}) - g^{2}(X_s)\right|ds} \leq 2\|g\|_{\infty}\int_{0}^{1}{\left|g(X_{\eta(s)}) - g(X_s)\right|ds}.
\end{equation}
From Equation~\eqref{eq:Equiv-NormX-NormL2}, one has $\E\left[\|g\|^{2}_{X}\right] \geq \tau_0 \|g\|^{2}$. Thus, if $\|g\|^{2}_{X} = 1$, then $\|g\|^{2}\leq 1/\tau_0$, and we deduce for all $g = \sum_{\ell=0}^{m-1}{a_{\ell}\phi_{\ell}}$ that there exists a constant $C>0$ such that
\begin{itemize}
    \item \textcolor{black}{{\bf Collection [B]}}: $\|g\|_{\infty} \leq \|a\|_2 \leq C\sqrt{m}$ ~ (see ~ \cite{denis2020ridge})
    \item \textcolor{black}{{\bf Collection [CS$-$OB]}: $\|g\|_{\infty} \leq Cm^{r/2}$ ~ since $\|g\| = \|a\|_2$ and $\sum_{\ell=0}^{m-1}{\phi^{2}_{\ell}} = \mathrm{O}(m^{r})$.}
\end{itemize}
Moreover, each $g\in\mathcal{S}_{m}$ such that $\|g\|^{2}_{X} = 1$ is \textcolor{black}{a} Lipschitz function with a Lipschitz coefficient $L_g = \mathrm{O}\left(m^{3/2}\right)$ 
 \textcolor{black}{for the collection [\textbf{B}], or $L_g = O\left(m^{r^{\prime}/2}\right)$ for the collection [\textbf{CS$-$OB}}]. For the spline basis, this result is obtained in \cite{denis2020ridge}, \textit{proof of Lemma C.1} combined with \textit{Lemma 2.6}. \textcolor{black}{For the collection [\textbf{CS$-$OB}], for all $x,y\in I$ and using the Cauchy$-$Schwarz inequality together with Equation~\eqref{eq:Condition-OB} and the finite increments theorem, there exists a constant $C > 0$ such that
\begin{align*}
    |g(x) - g(y)| \leq & \sum_{\ell = 0}^{m - 1}{|a_{\ell}|.|\phi_{\ell}(x) - \phi_{\ell}(y)|}\\
    \leq & \|\mathbf{a}\|_2\sqrt{\sum_{\ell = 0}^{m-1}{\left\|\phi^{\prime}_{\ell}\right\|^{2}_{\infty}}}|x-y| \\
    \leq & Cm^{r^{\prime}/2}|x-y|.
\end{align*}
}
Back to Equation~\eqref{eq:Diff-NormXNorm-n.1}, there exists a constant $C>0$ such that
\textcolor{black}{
\begin{equation}
\label{eq:Diff-NormX-Norm-n.1-BIS}
    \left|\|g\|^{2}_{n,1} - \|g\|^{2}_{X}\right| \leq Cm^{\alpha}\int_{0}^{1}{|X_{\eta(s)} - X_s|ds}
\end{equation}
with $\alpha = 2$ for the collection $[\mathbf{B}]$, and $\alpha = (r+r^{\prime})/2$ for the collection $[\mathbf{CS-OB}]$.
}
We have:
\begin{align*}
    \Omega^{c}_{n,m} = \left\{\omega \in \Omega, ~ \exists g\in\mathcal{S}_{m}\setminus\{0\}, ~ \left|\frac{\|g\|^{2}_{n,1}}{\|g\|^{2}_{X}}-1\right| > \frac{1}{2}\right\},
\end{align*}
and, using Equation~\eqref{eq:Diff-NormX-Norm-n.1-BIS}, we obtain
\begin{equation*}
    \underset{g\in\mathcal{S}_{m}\setminus\{0\}}{\sup}{\left|\frac{\|g\|^{2}_{n,1}}{\|g\|^{2}_{X}}-1\right|} = \underset{g\in\mathcal{S}_{m},~\|g\|^{2}_{X} = 1}{\sup}{\left|\|g\|^{2}_{n,1}-\|g\|^{2}_{X}\right|} \leq C\textcolor{black}{m^{\alpha}}\int_{0}^{1}{|X_{\eta(s)} - X_s|ds}.
\end{equation*}
Finally, using the Markov inequality, the H\"older inequality, Lemma~\eqref{lm:ConseqAssumption1}, and Lemma~\ref{lem:discrete-bis}, we conclude that
\begin{align*}
   \P\left(\Omega^{c}_{n,m}\right) \leq &~ \P\left(C\textcolor{black}{m^{\alpha}}\int_{0}^{1}{|X_{\eta(s)} - X_s|ds} \geq \frac{1}{2}\right) \\
   \leq &~ C\textcolor{black}{m^{\alpha\gamma}}\int_{0}^{1}{\E\left[|X_{\eta(s)} - X_s|^{\gamma}\right]ds}\\
   \leq &~ \textcolor{black}{C\frac{m^{\alpha\gamma}}{n^{\gamma/2}}}
\end{align*}
with $\gamma \in (1,+\infty)$.
\end{proof}


\begin{proof}[\textbf{Proof of Lemma}~\ref{lm:Proba-OmegaComp}~]
    We have:
    \begin{align*}
        \Omega^{c}_{n,N,m}=\left\{\omega\in\Omega, \ \exists h_0\in\mathcal{S}_{m}, \ \left|\frac{\|h\|^{2}_{n,N}}{\|h\|^{2}_{n}}-1\right|>\frac{1}{2}\right\},
    \end{align*}
Denote by $\mathcal{H}_{m} = \left\{h\in\mathcal{S}_{m}, ~ \|h\|_{n} = 1\right\}$.
    We have
\begin{align*}
    \underset{h\in\mathcal{H}_{m}}{\sup}{\left|\frac{\|h\|^{2}_{n,N}}{\|h\|^{2}_{n}}-1\right|} = \underset{h\in\mathcal{H}_{m}}{\sup}{\left|\|h\|^{2}_{n,N}-1\right|}.
\end{align*}
Let $\varepsilon > 0$ and let $\mathcal{H}^{\varepsilon}_{m}$ be the $\varepsilon-$net of $\mathcal{H}_{m}$ {\it w.r.t.} the supremum norm $\|.\|_{\infty}$. Then, for each $h\in\mathcal{H}_{m}$, there exists $h_{\varepsilon}\in\mathcal{H}^{\varepsilon}_{m}$ such that $\|h-h_{\varepsilon}\|_{\infty} \leq \varepsilon$. Then
\begin{align*}
    \left|\|h\|^{2}_{n,N} - 1\right| \leq  \left|\|h\|^{2}_{n,N} - \|h_{\varepsilon}\|^{2}_{n,N} \right| + \left|\|h_{\varepsilon}\|^{2}_{n,N} - 1\right|
\end{align*}
and,
\begin{align*}
    \left|\|h\|^{2}_{n,N} - \|h_{\varepsilon}\|^{2}_{n,N} \right| \leq &~ \frac{1}{Nn}\sum_{j=1}^{N}{\sum_{k=0}^{n-1}{\left|h(X^{j}_{k\Delta}) - h_{\varepsilon}(X^{j}_{k\Delta})\right|\left(\|h\|_{\infty} + \|h_{\varepsilon}\|_{\infty}\right)}} \leq \left(\|h\|_{\infty} + \|h_{\varepsilon}\|_{\infty}\right)\varepsilon.
\end{align*}
Moreover, we have $\|h\|^{2}, \|h_{\varepsilon}\|^{2} \leq 1/\tau_0$. Then, there exists a constant $\mathbf{c} > 0$ such that
\begin{align*}
     \begin{cases} 
     \left|\|h\|^{2}_{n,N} - \|h_{\varepsilon}\|^{2}_{n,N} \right| \leq 2\sqrt{\frac{{\bf c}m}{\tau_0}}\varepsilon ~~ \mathrm{for} ~ \mathrm{the} ~ \mathrm{spline} ~ \mathrm{basis} ~ (\mathrm{see ~ \textit{Lemma 2.6} ~ in ~ Denis~{\it et ~ al.}(2021)}) \\ \\
     \left|\|h\|^{2}_{n,N} - \|h_{\varepsilon}\|^{2}_{n,N} \right| \leq 2\sqrt{\frac{{\bf c}m}{\tau_0}}\varepsilon ~~ \mathrm{for ~ an ~ orthonormal ~ basis} ~~ (\|h\|^{2}_{\infty} \leq (\underset{0\leq\ell\leq m-1}{\max}{\|\phi_{\ell}\|^{2}_{\infty}})m\|h\|^{2}).
     \end{cases}
\end{align*}
Therefore, for all $\delta > 0$ and for both the spline basis and any orthonormal basis,
\begin{equation*}
    \P\left(\underset{h\in\mathcal{H}_{m}}{\sup}{\left|\|h\|^{2}_{n,N}-1\right|} \geq \delta\right) \leq \P\left(\underset{h\in\mathcal{H}^{\varepsilon}_{m}}{\sup}{\left|\|h\|^{2}_{n,N}-1\right|} \geq \delta/2\right) + \one_{4\varepsilon\sqrt{\frac{{\bf c}m}{\tau_0}} \geq \delta}.
\end{equation*}
We set $\delta = 1/2$ and we choose $\varepsilon > 0$ such that $4\varepsilon\sqrt{\frac{{\bf c}m}{\tau_0}} < 1/2$. Then, using the Hoeffding inequality, there exists a constant $c>0$ depending on ${\bf c}$ and $\tau_0$ such that
\begin{align}
\label{eq:HoeffInequality}
    \mathbb{P}\left(\Omega^{c}_{n,N,m}\right)\leq 2\mathcal{N}_{\infty}(\varepsilon,\mathcal{H}_{m})\exp\left(-c\frac{N}{m}\right)
\end{align}
where $\mathcal{N}_{\infty}(\varepsilon,\mathcal{H}_{m})$ is the covering number of $\mathcal{H}_{m}$ satisfying:
\begin{equation}
\label{eq:CovNumber}
    \mathcal{N}_{\infty}(\varepsilon,\mathcal{H}_{m}) \leq \left(\kappa\frac{\sqrt{m}}{\varepsilon}\right)^{m}
\end{equation}
where the constant $\kappa>0$ depends on $c>0$ (see \cite{denis2020ridge}, \textit{Proof of Lemma D.1}). We set $\varepsilon = \frac{\kappa \sqrt{m^{*}}}{N}$ with $m^{*} = \max{\mathcal{M}}$ and we derive from Equations~\eqref{eq:HoeffInequality}~and~\eqref{eq:CovNumber}~that
\begin{equation*}
    \P\left(\Omega^{c}_{n,N,m}\right) \leq 2N^{m^{*}}\exp\left(-c\frac{N}{m^{*}}\right) = 2\exp\left(-c\frac{N}{m^{*}}\left(1-\frac{m^{*2}\log(N)}{cN}\right)\right).
\end{equation*}
\begin{itemize}
    \item If $n \geq N$, then $m \in \mathcal{M} = \left\{1,\ldots,\sqrt{N}/\log(Nn)\right\}$. Since $m^{*2}\log(N)/N \rightarrow 0$ as $N\rightarrow +\infty$, there exists a constant $C>0$ such that
    \begin{align*}
        \mathbb{P}\left(\Omega^{c}_{n,N,m}\right)\leq 2\exp\left(-C\sqrt{N}\right).
    \end{align*}
    \item If $n \leq N$, then $m \in \mathcal{M} = \left\{1,\ldots, \sqrt{n}/\log(Nn)\right\}, ~ m^{*2}\log(N)/N \leq \log(N)/\log^{2}(Nn) \rightarrow 0$ as $N,n \rightarrow \infty$, and
    \begin{equation*}
        \mathbb{P}\left(\Omega^{c}_{n,N,m}\right)\leq 2\exp\left(-C\sqrt{n}\right).
    \end{equation*}
    \item If $n \propto N$, then $m \in \mathcal{M} = \left\{1,\ldots,\sqrt{N}/\log(Nn)\right\}$. Since $m^{*2}\log(N)/N \rightarrow 0$ as $N\rightarrow +\infty$, there exists a constant $C>0$ such that
    \begin{align*}
        \mathbb{P}\left(\Omega^{c}_{n,N,m}\right)\leq 2\exp\left(-C\sqrt{N}\right).
    \end{align*}
\end{itemize}
\end{proof}


\subsection*{Proof of Lemma~\ref{lm:LemmaAdaptation}~}

\begin{proof}
We obtain from \textit{Comte,Genon-Catalot,Rozenholc (2007) proof of Lemma 3}
that for each $j\in[\![1,N]\!], \ \ k\in[\![0,n-1]\!]$ and $p\in\mathbb{N}\setminus\{0,1\}$
\begin{align*}
\mathbb{E}\left[\exp\left(ug(X^{j}_{k\Delta})\xi^{j,1}_{k\Delta}-\frac{au^2g^{2}(X^{j}_{k\Delta})}{1-bu}\right)|\mathcal{F}_{k\Delta}\right]\leq 1
\end{align*}

with $a=\mathrm{e}\left(4\sigma^{2}_{1}c^2\right)^2, \ b=4\sigma^{2}_{1}c^2\mathrm{e}\|g\|_{\infty}$, $u\in\mathbb{R}$ such that $bu<1$ and $c>0$ a real constant. Thus,
\begin{multline*}
    \mathbb{P}\left(\frac{1}{Nn}\sum_{j=1}^{N}{\sum_{k=0}^{n-1}{g(X^{j}_{k\Delta})\xi^{j,1}_{k\Delta}}}\geq\varepsilon, \|g\|^{2}_{N,n}\leq v^2\right)=\mathbb{E}\left(\mathbf{1}_{\left\{\sum_{j=1}^{N}{\sum_{k=0}^{n-1}{ug(X^{j}_{k\Delta})\xi^{(j,1)}_{k\Delta}\geq Nnu\varepsilon}}\right\}}\mathbf{1}_{\|g\|^{2}_{n,N}\leq v^2}\right)\\
    =\mathbb{E}\left(\mathbf{1}_{\left\{\exp\left(\sum_{j=1}^{N}{\sum_{k=0}^{n-1}{ug(X^{j}_{k\Delta})\xi^{(j,1)}_{k\Delta}}}\right)\mathrm{e}^{-Nnu\varepsilon}\geq 1\right\}}\one_{\|g\|^{2}_{N,n}\leq v^2}\right)\\
    \leq\mathrm{e}^{-Nnu\varepsilon}\mathbb{E}\left[\one_{\|g\|^{2}_{n,N}\leq v^2}\exp\left\{\sum_{j=1}^{N}{\sum_{k=0}^{n-1}{ug(X^{j}_{k\Delta})\xi^{j,1}_{k\Delta}}}\right\}\right].
\end{multline*}

It follows that,
\begin{eqnarray*}
\mathbb{P}\left(\frac{1}{Nn}\sum_{j=1}^{N}{\sum_{k=0}^{n-1}{g(X^{j}_{k\Delta})\xi^{j,1}_{k\Delta}}}\geq\varepsilon, \|g\|^{2}_{n,N}\leq v^2\right)
\leq&~\exp\left\{-Nnu\varepsilon+\frac{Nnau^2v^2}{1-bu}\right\}.
\end{eqnarray*}

We set $u=\frac{\varepsilon}{\varepsilon b+2av^2}$. Then, we have $-Nnu\varepsilon+Nnav^2u^2/(1-bu)=-Nn\varepsilon^2/2(\varepsilon b+2av^2)$ and,
\begin{align*}
\mathbb{P}\left(\frac{1}{Nn}\sum_{j=1}^{N}{\sum_{k=0}^{n-1}{g(X^{j}_{k\Delta})\xi^{j,1}_{k\Delta}}}\geq\varepsilon, \|g\|^{2}_{n,N}\leq v^2\right)&\leq\exp\left(-\frac{Nn\varepsilon^2}{2(\varepsilon b+av^2)}\right)\\
&\leq\exp\left(-C\frac{Nn\varepsilon^2}{\sigma^{2}_{1}\left(\varepsilon\|g\|_{\infty}+4\sigma^{2}_{1}v^2\right)}\right)
\end{align*}

where $C>0$ is a constant depending on $c>0$.
\end{proof}


\textcolor{black}{
\begin{proof}[\textbf{Proof of Lemma~\ref{lm:ConseqAssumption1}}]
    Let $q \geq 1$. Suppose that $X_0 = 0$. Under Assumption~\ref{ass:Assumption 1}, we deduce from Equation \eqref{eq:model} that for all $t \in [0,1]$,
    \begin{align*}
        X_t = &~ \int_{0}^{t}{b(X_s)ds} + \int_{0}^{t}{\sigma(X_s)dW_s} \\
        |X_t| \leq &~ \int_{0}^{t}{|b(X_s)|ds} + \left|\int_{0}^{t}{\sigma(X_s)dW_s}\right| \\
        |X_t| \leq &~ \int_{0}^{t}{C_0(1+|X_s|)ds} + \left|\int_{0}^{t}{\sigma(X_s)dW_s}\right| 
    \end{align*}
    where the constant $C_0 > 0$ depends on the constant $L_0 > 0$ given in Assumption~\ref{ass:Assumption 1}. Let $p > 1$ be a real number such that $1/p + 1/q = 1$. From the H\"older inequality, there exists a constant $C_q$ depending on $q$ such that
    \begin{align*}
     \forall~ t \in [0,1], ~~ |X_t|^{q} \leq &~ C_q\left(\int_{0}^{t}{(1+|X_s|^{q})ds} + \left|\int_{0}^{t}{\sigma(X_s)dW_s}\right|^{q}\right).
    \end{align*}
    Using the Cauchy Schwarz inequality, one deduces that 
    \begin{align*}
      \mathbb{E}\left[\underset{t \in [0,1]}{\sup}{|X_t|^{q}}\right] \leq &~ C_{q}\left(\int_{0}^{1}{(1+\mathbb{E}\left[|X_s|^{q}\right])ds} + \mathbb{E}\left[\underset{t \in [0,1]}{\sup}{\left|\int_{0}^{t}{\sigma(X_s)dW_s}\right|^{q}}\right]\right)\\
      \leq &~ C_{q}\left(\int_{0}^{1}{(1+\mathbb{E}\left[|X_s|^{q}\right])ds} + \sqrt{\mathbb{E}\left[\underset{t \in [0,1]}{\sup}{\left|\int_{0}^{t}{\sigma(X_s)dW_s}\right|^{2q}}\right]}\right).
    \end{align*}
    From the Doob inequality (see \cite{revuz2013continuous}, corollary 1.6), one has
    \begin{align*}
        \mathbb{E}\left[\underset{t \in [0,1]}{\sup}{\left|\int_{0}^{t}{\sigma(X_s)dW_s}\right|^{q}}\right] \leq \left(\dfrac{2q}{2q-1}\right)^{q}\sqrt{\mathbb{E}\left[\left|\int_{0}^{1}{\sigma(X_s)dW_s}\right|^{2q}\right]}.
    \end{align*}
    moreover, using the Burkholder-Davis-Gundy inequality, there exists a constant $C_{1,q}$ depending on $q \geq 1$ such that 
    $$ \sqrt{\mathbb{E}\left[\left|\int_{0}^{1}{\sigma(X_s)dW_s}\right|^{2q}\right]} \leq C_{1,q}\mathbb{E}\left[\int_{0}^{1}{\sigma^{q}(X_s)ds}\right] < C_{1,q}\sigma^{q}_{1}. $$
    Then, we deduce that
    \begin{align*}
      \mathbb{E}\left[\underset{t \in [0,1]}{\sup}{|X_t|^{q}}\right] \leq &~ C_{q}\left(\int_{0}^{1}{(1+\mathbb{E}\left[|X_s|^{q}\right])ds} + C_{1,q}\sigma^{q}_{1}\right).
    \end{align*}
    Finally, from Proposition~\ref{prop:densityTransition-bis}, we have for all $q \geq 1$ and for all $s \in [0,1]$, $\mathbb{E}\left[|X_s|^{q}\right] < \infty$.
\end{proof}
}


\textcolor{black}{
\begin{lemme}\label{lm:RegModel}
    Under Assumption~\ref{ass:Assumption 1}, we obtain from Equation~\eqref{eq:model} and the sample paths $D_{N,n}$ the following regression model for the estimation of $\sigma^{2}$:
\begin{equation*}
  U^{j}_{k\Delta_n}=\sigma^{2}(X^{j}_{k\Delta_n})+\zeta^{j}_{k\Delta_n}+R^{j}_{k\Delta_n}, ~~ \forall (j,k)\in[\![1,N]\!]\times[\![0,n-1]\!]
\end{equation*}
where for each pair $(j,k)\in[\![1,N]\!]\times[\![0,n-1]\!]$, the response variable $U^{j}_{k\Delta_n}$ is given by
$$ U^{j}_{k\Delta_n} := \frac{\left(X^{j}_{(k+1)\Delta_n} - X^{j}_{k\Delta_n}\right)^2}{\Delta_n},$$
and the error terms are respectively given by $\zeta^{j}_{k\Delta}=\zeta^{j,1}_{k\Delta}+\zeta^{j,2}_{k\Delta}+\zeta^{j,3}_{k\Delta}$, with:
\begin{equation*}
    \zeta^{j,1}_{k\Delta_n}=\frac{1}{\Delta_n}\left[\left(\int_{k\Delta_n}^{(k+1)\Delta_n}{\sigma(X^{j}_{s})dW^{j}_{s}}\right)^2-\int_{k\Delta_n}^{(k+1)\Delta_n}{\sigma^{2}(X^{j}_{s})ds}\right],
\end{equation*}
\begin{equation*}
    \zeta^{j,2}_{k\Delta_n}=\frac{2}{\Delta_n}\int_{k\Delta_n}^{(k+1)\Delta_n}{((k+1)\Delta_n-s)\sigma^{\prime}(X^{j}_{s})\sigma^{2}(X^{j}_{s})dW^{j}_{s}},
\end{equation*}
\begin{equation*}
\zeta^{j,3}_{k\Delta_n}=2b(X^{j}_{k\Delta_n})\int_{k\Delta_n}^{(k+1)\Delta_n}{\sigma\left(X^{j}_{s}\right)dW^{j}_{s}},
\end{equation*}
and $R^{j}_{k\Delta}
=R^{j,1}_{k\Delta}+R^{j,2}_{k\Delta}+R^{j,3}_{k\Delta}$, with:
\begin{equation*}
    R^{j,1}_{k\Delta_n}=\frac{1}{\Delta_n}\left(\int_{k\Delta_n}^{(k+1)\Delta_n}{b(X^{j}_{s})ds}\right)^2, ~~~~ R^{j,3}_{k\Delta_n} = \frac{1}{\Delta_n}\int_{k\Delta_n}^{(k+1)\Delta_n}{((k+1)\Delta_n-s)\Phi(X^{j}_{s})ds}
\end{equation*}
\begin{equation*}
R^{j,2}_{k\Delta_n}=\frac{2}{\Delta_n}\left(\int_{k\Delta_n}^{(k+1)\Delta_n}{\left(b(X^{j}_{s})-b(X^{j}_{k\Delta_n})\right)ds}\right)\left(\int_{k\Delta_n}^{(k+1)\Delta_n}{\sigma(X^{j}_{s})dW^{j}_{s}}\right),
\end{equation*}
where 
\begin{equation*} \Phi:=2b\sigma^{\prime}\sigma+\left[\sigma^{\prime\prime}\sigma+\left(\sigma^{\prime}\right)^2\right]\sigma^{2}.
\end{equation*}
\end{lemme}
}


\textcolor{black}{
\begin{proof}[\textbf{Proof of Lemma~\ref{lm:RegModel}}]
    From Equation~\eqref{eq:model}, we have
    $$dX_t = b(X_t)d_t + \sigma(X_t)dW_t,$$
    and for all $(j,k) \in [\![1,N]\!] \times [\![0,n-1]\!]$,
    \begin{align*}
        \left(X^{j}_{(k+1)\Delta} - X^{j}_{k\Delta}\right)^2 = &~ \left(\int_{k\Delta}^{(k+1)\Delta}{b(X^{j}_s)ds} + \int_{k\Delta}^{(k+1)\Delta}{\sigma(X_s^j)dW_s^j}\right)^2 \\
        = &~ \left(\int_{k\Delta}^{(k+1)\Delta}{b(X_s^j)ds}\right)^2 + 2\left(\int_{k\Delta}^{(k+1)\Delta}{b(X_s^j)ds}\right)\left(\int_{k\Delta}^{(k+1)\Delta}{\sigma(X_s^j)dW_s^j}\right) \\
        &+ \left(\int_{k\Delta}^{(k+1)\Delta}{\sigma(X_s^j)dW_s^j}\right)^2 \\
        = &~ 2\left(\int_{k\Delta}^{(k+1)\Delta}{(b(X_s^j)-b(X^{j}_{k\Delta}))ds}\right)\left(\int_{k\Delta}^{(k+1)\Delta}{\sigma(X_s^j)dW_s^j}\right) \\
        &+ \left(\int_{k\Delta}^{(k+1)\Delta}{b(X_s^j)ds}\right)^2 + \left(\int_{k\Delta}^{(k+1)\Delta}{\sigma(X_s^j)dW_s^j}\right)^2 - \int_{k\Delta}^{(k+1)\Delta}{\sigma^{2}(X_s^j)ds} \\
        & + \int_{k\Delta}^{(k+1)\Delta}{\sigma^{2}(X_s^j)ds} + 2\Delta b(X^{j}_{k\Delta})\int_{k\Delta}^{(k+1)\Delta}{\sigma(X_s^j)dW_s^j}.
    \end{align*}
    Then, we deduce that
    \begin{equation}\label{eq:partial-sigma}
        \left(X^{j}_{(k+1)\Delta} - X^{j}_{k\Delta}\right)^2 = \int_{k\Delta}^{(k+1)\Delta}{\sigma^{2}(X_s^j)ds} + \left(\int_{k\Delta}^{(k+1)\Delta}{b(X_s^j)ds}\right)^2 + \Delta R^{j,2}_{k\Delta} + \Delta\zeta^{j,1}_{k\Delta} + \Delta\zeta^{j,3}_{k\Delta}.
    \end{equation}
    As $\sigma$ is a $\mathcal{C}^{2}$ function on $\mathbb{R}$, from the It\^o formula, one has for all $j \in [\![1,N]\!]$
    \begin{align*}
        d(\sigma^{2}(X_t^j)) = &~ 2\sigma^{\prime}(X_t^j)\sigma(X_t^j)dX_t^j + 2\left[\sigma^{\prime\prime} + (\sigma^{\prime})^2\right](X_t^j)dt \\
        = &~ \Phi(X_t^j)dt + 2\sigma^{\prime}(X_t^j)\sigma^{2}(X_t^j)dW_t^j
    \end{align*}
    with $\Phi = 2b\sigma^{\prime}\sigma+\left[\sigma^{\prime\prime}\sigma+\left(\sigma^{\prime}\right)^2\right]\sigma^{2}$. We deduce that for all $(j,k) \in [\![1,N]\!] \times [\![0,n-1]\!]$ and $s \in [k\Delta, (k+1)\Delta)$ tels que,
    \begin{align*}
        \sigma^{2}(X_s^j) = &~ \sigma^{2}(X^{j}_{k\Delta}) + \int_{k\Delta}^{s}{\Phi(X_u^j)du} + 2\int_{k\Delta}^{s}{\sigma^{\prime}(X_u^j)\sigma^{2}(X_u^j)dW_u^j}.
    \end{align*}
    We then integrate on the interval $[k\Delta, (k+1)\Delta)$ and we obtain 
    \begin{align*}
        \int_{k\Delta}^{(k+1)\Delta}{\sigma^{2}(X_s^j)ds} = &~ \Delta\sigma^{2}(X^{j}_{k\Delta}) + \int_{k\Delta}^{(k+1)\Delta}{\int_{k\Delta}^{s}{\Phi(X_u^j)duds}} \\
        &+ 2\int_{k\Delta}^{(k+1)\Delta}{\int_{k\Delta}^{s}{\sigma^{\prime}(X_u^j)\sigma^{2}(X_u^j)dW_u^jds}} \\
        = &~ \Delta\sigma^{2}(X^{j}_{k\Delta}) + \int_{k\Delta}^{(k+1)\Delta}{\Phi(X_u^j)\int_{u}^{(k+1)\Delta}{dsdu}} \\
        &+ 2\int_{k\Delta}^{(k+1)\Delta}{\sigma^{\prime}(X_u^j)\sigma^{2}(X_u^j)\int_{u}^{(k+1)\Delta}{dsdW_u^j}} \\
        = &~ \Delta\sigma^{2}(X^{j}_{k\Delta}) + \int_{k\Delta}^{(k+1)\Delta}{[(k+1)\Delta - u]\Phi(X_u^j)du} \\
        &+ 2\int_{k\Delta}^{(k+1)\Delta}{[(k+1)\Delta - u]\sigma^{\prime}(X_u^j)\sigma(X_u^j)dW_u^j}.
    \end{align*}
    Thus, we obtain for al $(j,k) \in [\![1,N]\!] \times [\![0,n-1]\!]$,
    \begin{equation}\label{eq:partial2-sigma}
        \int_{k\Delta}^{(k+1)\Delta}{\sigma^{2}(X_s^j)ds} = \Delta\sigma^{2}(X^{j}_{k\Delta}) + \int_{k\Delta}^{(k+1)\Delta}{[(k+1)\Delta - u]\Phi(X_u^j)du} + \zeta^{j,2}_{k\Delta}
    \end{equation}
    We finally deduce from Equations~\eqref{eq:partial-sigma} et \eqref{eq:partial2-sigma} that
    \begin{align*}
        \frac{\left(X^{j}_{(k+1)\Delta} - X^{j}_{k\Delta}\right)^2}{\Delta} = \sigma^{2}(X^{j}_{k\Delta}) + \zeta^{j}_{k\Delta} + R^{j}_{k\Delta}.
    \end{align*}
\end{proof}
}
\end{document}